\theoremstyle{plain}
\newtheorem{thm}{Theorem}[section]
\newtheorem{lem}[thm]{Lemma}
\newtheorem{prop}[thm]{Proposition}
\newtheorem{cor}[thm]{Corollary}
\theoremstyle{definition}
\numberwithin{equation}{section}
\numberwithin{thm}{section}
\numberwithin{defn}{section}
\title{A Polynomial Rate of Convergence for the Dirichlet Problem on Orthodiagonal Maps}
\author{David Pechersky}
\date{\today}
\begin{document}

\maketitle

\begin{abstract}

\noindent We extend recent work of Gurel-Gurevich, Jerison and Nachmias \cite{GJN20} and Gwynne and Bou-Rabee \cite{BG24} by showing that as the mesh of our lattice tends to $0$, we have a polynomial rate of convergence for the Dirichlet problem on orthodiagonal maps with H\"older boundary data to its continuous counterpart. \\ \\
The key idea is that the convolution of a discrete harmonic function on an orthodiagonal map with a smooth mollifier has small Laplacian and so is ``almost harmonic." This also allows us to show that discrete harmonic functions on orthodiagonal maps are Lipschitz in the bulk on a mesoscopic scale.

\end{abstract}

\tableofcontents{}

\section{Introduction}
\label{sec: Introduction}

\subsection{A Polynomial Rate of Convergence for the Dirichlet Problem on Orthodiagonal Maps}
\label{subsec: A Polynomial Rate of Convergence for the Dirichlet Problem on Orthodiagonal Maps}

Due to the ubiquity of diffusion phenomena in the physical world, the Dirichlet problem is one of the most important partial differential equations in mathematical physics. Given a domain $\Omega\subseteq{\mathbb{R}^{n}}$ and boundary data $g:\partial{\Omega}\rightarrow{\mathbb{R}}$, we say that a function $h\in{C^{2}(\Omega)\cap{C(\overline{\Omega})}}$ solves the Dirichlet problem on $\Omega$ with boundary data given by $g$ if: 
\begin{align*}
    \Delta{h}(x)&=0 \hspace{10pt} \text{for all $x\in{\mathbb{R}^{n}}$} \\
    h(x)&=g(x) \hspace{10pt} \text{for $x\in{\partial{\Omega}}$}
\end{align*}
In the absence of an explicit analytic solution, to solve the Dirichlet problem, we need to solve it numerically. One natural way to do this is to discretize. Namely, we replace  our partial differential equation on $\Omega$ by a finite difference equation on a finite grid $\Gamma$ which is a good approximation of the continuous domain $\Omega$. Here we are exploiting the fact that this discrete Dirichlet problem converges to the corresponding continuous Dirichlet problem as the mesh of our grid tends to $0$. This approach appeared in the literature as early as the 1920's with the work of Wiener and Phillips \cite{PW23} and Courant, Friedrichs and Lewy \cite{CFL28}. For an example of prior work where we have an explicit rate for the convergence of the discrete Dirichlet problem to its continuum analogue, see Theorem 2 in Chapter 4.3 of \cite{theory of difference schemes}.  \\ \\ 
More recently, the problem of verifying the convergence of the discrete Dirichlet problem to its continuum analogue, for a wide class of discretizations of continuous 2D space, has attracted attention due to the connection between this problem and questions about the behavior of critical statistical physics in two dimensions. In brief, a very successful approach for proving rigorous statements about critical 2D lattice models has been to identify discrete holomorphic observables in these models and to show that for finer and finer discretizations of 2D space, these discrete holomorphic observables converge to the corresponding continuous holomorphic functions in the limit. From here, the convergence of these observables is leveraged to prove convergence of random processes or observables of interest. Some of the landmark results of 21$^{\text{st}}$ century probability theory that have been proven in this way include the proof of Cardy's formula for crossing probabilities in critical percolation \cite{Sm01}, the convergence of interfaces in the critical Ising model to $SLE_{3}$ \cite{Ising}, and the convergence of critical percolation interfaces to $SLE_{6}$ \cite{CN07}. For details, see \cite{KS07}. More recently, this approach has also allowed mathematicians to prove explicit rates of convergence for observables and random processes in critical 2D lattice models. For instance, an explicit polynomial rate of convergence has been established for the convergence of crossing probabilities in critical percolation to their continuum limit \cite{MNS14, BCL15}, the convergence of loop-erased random walk to $SLE_{2}$ \cite{V15}, and the convergence of percolation interfaces to $SLE_{6}$ \cite{BR24}. Since the real and imaginary parts of a discrete holomorphic function are discrete harmonic, it is not hard to see why understanding the convergence properties of the Dirichlet problem would be useful for understanding the convergence of discrete holomorphic functions, which can then be leveraged to gain information about critical 2D lattice models.  
\\ \\ Orthodiagonal maps are a large class of discretizations of 2D space that admit a notion of discrete complex analysis. That is, we can define a discrete analogue of the Cauchy- Riemann equations for functions defined on the vertices of an orthodiagonal map. The functions that satisfy these discrete Cauchy Riemann equations are said to be discrete holomorphic and have many of the properties that are characteristic of continuous holomorphic functions: we have a discrete analogue of Morera's theorem, the real and imaginary parts of a discrete holomorphic function are discrete harmonic, a priori regularity estimates, etc. In \cite{GJN20}, Gurel-Gurevich, Jerison and Nachmias showed that solutions to the Dirichlet problem on orthodiagonal maps converge to the solution of the corresponding continuous Dirichlet problem. This improves on prior work of Chelkak and Smirnov \cite{CS11}, Skopenkov \cite{Sk13} and Werness \cite{W15}, where this result is proven for the Dirichlet problem on orthodiagonal maps, subject to various additional regularity assumptions on the underlying lattice. In particular, Theorem 1.1 of \cite{GJN20} provides an explicit rate of convergence for the Dirichlet problem on orthodiagonal maps to the corresponding continuous Dirichlet problem for $C^{2}$ boundary data. More recently, Bou-Rabee and Gwynne established a sharp polynomial rate of convergence for solutions to the discrete Dirichlet problem on orthodiagonal maps to the corresponding continuous Dirichlet problem (see Theorem 3.3 of \cite{BG24}). Furthermore, their methods don't just apply to the two dimensional orthodiagonal setting: their result works for discrete harmonic functions on a large class of tilings of d-dimensional space for $d\geq{2}$, including sphere packings and Delaunay triangulations (see Theorem B, 3.5 and 3.6 of \cite{BG24}). However, in both the two dimensional orthodiagonal case and the more general d-dimensional setting, for Theorems 3.3, 3.5, and 3.6 of \cite{BG24} to give an effective estimate of the rate of convergence of the discrete Dirichlet problem to its continuum analogue, the second derivative of the solution to the corresponding continuous Dirichlet problem must be uniformly bounded. To ensure that this is the case, one must make additional assumptions as to the regularity of the boundary data, as well as the regularity of the boundary of our domain. For instance, Theorem 6.14 of \cite{GT01} tells us that if our domain is bounded and the boundary data and the boundary of our domain are both $C^{2}$, then the second derivative of the solution to the corresponding continuous Dirichlet problem extends continuous to the boundary and so is bounded uniformly. By using an entirely different approach, in Section \ref{sec: proof of polynomial rate of convergence for Dirichlet problem on OD maps}, we extend the work of Bou-Rabee and Gwynne by showing that for any simply connected domain, as long as our boundary data is H\"older, we have a polynomial rate of convergence for the Dirichlet problem on orthodiagonal maps to the corresponding continuous Dirichlet problem (see Theorem \ref{thm: polynomial rate of convergence for Dirichlet problem on OD maps}). Unlike in \cite{BG24}, our rate of convergence is not sharp. 

\subsection{Lipschitz Regularity in the Bulk on a Mesoscopic Scale for Harmonic Functions on Orthodiagonal Maps}
\label{subsec: Lipschitz Regularity in the Bulk on a Mesoscopic Scale for Harmonic Functions on Orthodiagonal Maps}

Suppose $\Omega$ is a subdomain of $\mathbb{R}^{2}$ and $h:\Omega\rightarrow{\mathbb{R}}$ is harmonic. Then the classical Harnack estimate says that if $x,y\in{\Omega}$ and $d=\text{dist}(x,\partial{\Omega})\wedge{\text{dist}(y,\partial{\Omega})}$, then: 
\begin{equation}
\label{eqn: Harnack estimate for continuous harmonic functions}
    |h(y)-h(x)|\leq{2\hspace{1pt}\|h\|_{L^{\infty}(\Omega)}\Big(\frac{|x-y|}{d}\Big)}
\end{equation}
This result follows from the mean value property for harmonic functions. Namely, if $x\in{\Omega}$ and $r<{\text{dist}(x,\partial{\Omega})}$, we have that: 
\begin{equation}
\label{eqn: integral MVP for harmonic functions}
    h(x)=\frac{1}{\pi{r}^{2}}\int_{B(x,r)}h(u)dA(u)
\end{equation}
where ``$dA(u)$" refers to integration with respect to area on $\Omega$. Insofar as orthodiagonal maps are good approximations of continuous 2D space, we should expect that something like this is true for discrete harmonic functions on orthodiagonal maps. Indeed, in the more restricted setting of isoradial graphs with angles uniformly bounded away from $0$ and $\pi$, Chelkak and Smirnov show that an anologue of the Harnack estimate holds (see Corollary 2.9 of \cite{CS11}). Just as in the continuous setting, the result follows from an analogue of the integral mean value property in Equation \ref{eqn: integral MVP for harmonic functions} for discrete harmonic functions (see Proposition A.2 of \cite{CS11}). The proof of this mean value property for discrete harmonic functions on isoradial graphs requires asymptotics for the discrete Green's function on isoradial graphs, proven by Kenyon (see Theorem 7.3 of \cite{Ke02}). It is expected that these estimates should also hold for the discrete Green's function on general orthodiagonal maps, however, this has yet to be proven. In light of this, to prove Harnack-type estimates for discrete harmonic functions on orthodiagonal maps, we will use a different approach.\\ \\
Namely, in Section \ref{sec: Lipschitz Regularity on a Mesoscopic Scale for Harmonic Functions on Orthodiagonal Maps}, we show that if you convolve a discrete harmonic function with a smooth mollifier, the resulting continuous function is ``almost" harmonic in that its Laplacian is  small in a precise quantitative sense. It turns out that to have a Harnack-type estimate like the one in Equation \ref{eqn: Harnack estimate for continuous harmonic functions}, we do not need our function to be harmonic. Being almost harmonic is enough. Thus, we have a Harnack-type estimate for the convolution of a discrete harmonic function with a smooth mollifier.\\ \\ Regularity estimates for discrete harmonic functions on orthodiagonal maps tell us that that our original discrete harmonic function is close to its convolution with a smooth mollifier, provided the support of this smooth mollifier is small. Since discrete harmonic functions on orthodiagonal maps are close to continuous functions that satisfy a Harnack-type estimate, we conclude that we also have a Harnack-type estimate for discrete harmonic functions on orthodiagonal maps, at least on a mesoscopic scale. 

\subsection{Conventions and Notation}
\begin{itemize}
    \item If $a,b\in{\mathbb{R}}$,
    \begin{align*}
        a\wedge{b}&:=\min\{a,b\} & a\vee{b}&:=\max\{a,b\}
    \end{align*}
    \item If $v\in{\mathbb{R}^{n}}$, 
    $$
    |v|:=\sqrt{v_{1}^{2}+v_{2}^{2}+...+v_{n}^{2}}
    $$
    \item If $f:\Omega\rightarrow{\mathbb{R}^{m}}$, where $\Omega$ is a subdomain of $\mathbb{R}^{n}$, then:
    \begin{equation*}
        \|f\|:=\sup\{|f(x)|: x\in{\Omega}\}
    \end{equation*}
    \item Given a $2\times{2}$ matrix $A=\begin{bmatrix} a_{1,1} & a_{1,2} \\ a_{2,1} & a_{2,2} \end{bmatrix}$, $\|A\|_{2}$ is the Frobenius norm of $A$: 
    $$
    \|A\|_{2}=\sqrt{a_{1,1}^{2}+a_{1,2}^{2}+a_{2,1}^{2}+a_{2,2}^{2}}
    $$ 
    \item In particular, if $f:\Omega\rightarrow{\mathbb{R}}$ is $C^{2}$, where $\Omega$ is a subdomain of $\mathbb{R}^{2}$, then: 
    \begin{equation*}
        \|D^{2}f(x)\|_{2}=\sqrt{(\partial_{1}^{2}f(x))^{2}+2(\partial_{1}\partial_{2}f(x))^{2}+(\partial_{2}^{2}f(x))^{2}}
    \end{equation*}
    and: 
    \begin{equation*}
        \|D^{2}f\|:=\sup\{\|D^{2}f(x)\|_{2}:x\in{\Omega}\}
    \end{equation*}
    If $f$
    \item Similarly, if $f:\Omega\rightarrow{\mathbb{R}}$ is $C^{3}$, where $\Omega$ is a subdomain of $\mathbb{R}^{2}$, then:
    \begin{equation*}
        \|D^{3}f(x)\|_{2}=\sqrt{(\partial_{1}^{3}f(x))^{2}+(\partial_{1}^{2}\partial_{2}f(x))^{2}+(\partial_{1}\partial_{2}^{2}f(x))^{2}+(\partial_{2}^{3}f(x))^{2}}
    \end{equation*}
    and: 
    \begin{equation*}
        \|D^{3}f\|:=\sup\{\|D^{2}f(x)\|_{2}:x\in{\Omega}\}
    \end{equation*}
    
    \item Recall that for any matrix, its Frobenius norm is an upper bound for its operator norm. In particular, if $v,w\in{\mathbb{R}^{2}}$ and $A$ is a $2\times{2}$ matrix,
    \begin{equation*}
        |v^{T}Aw|\leq{|v|\cdot{\|A\|_{2}\cdot|w|}}
    \end{equation*}
\end{itemize}

\section*{Acknowledgements} We thank Ilia Binder for many valuable discussions over many years. In particular, Ilia was the one who suggested that we look at \cite{Ch22} and \cite{BR21}. The ideas in these papers formed the basis for the proof of our main result, Theorem \ref{thm: polynomial rate of convergence for Dirichlet problem on OD maps}. We would also like to thank Dmitry Chelkak and Marianna Russkikh for discussing the results of \cite{CLR23} with us and commenting on the state-of-the-art in discrete complex analysis more broadly.

\section{A Brief Introduction to Orthodiagonal Maps}
\label{sec: A Brief Introduction to Orthodiagonal Maps}

Before we can precisely state our results, we first need to establish some terminology and remind the reader of some basic facts. Our exposition largely follows that of Section 1.1 of \cite{GJN20} and Sections 2.2 and 2.4 of \cite{BP24}.\\ \\
A \textbf{finite network} is a finite graph $G=(V,E)$ with positive edge weights $(c(e))_{e\in{E}}$. For any edge $e\in{E}$, we say that $c(e)$ is the \textbf{conductance} of that edge. Given a function $f:V\rightarrow{\mathbb{R}}$, its \textbf{Laplacian} $\Delta{f}:V\rightarrow{\mathbb{R}}$ is given by: 
\begin{equation*}
    \Delta{f}(x)=\sum_{y:y\sim{x}}c(x,y)\big(f(y)-f(x)\big)
\end{equation*}
where the expression $y\sim{x}$ indicates that $y$ and $x$ are neighbors in $G$. If $\Delta{f(x)}=0$, we say that $f$ is \textbf{harmonic} at $x$. Equivalently, $f$ is harmonic at $x$ if: 
\begin{equation*}
    f(x)=\frac{1}{\pi_{x}}\sum_{y:y\sim{x}}c(x,y)f(y)
\end{equation*}
where: 
\begin{equation*}
    \pi_{x}=\sum_{y:y\sim{x}}c(x,y)
\end{equation*}
This is reminiscent of the mean value property for continuous harmonic functions. From this formula, it is immediate that discrete harmonic functions satisfy the maximum principle: 
\begin{prop}
    If $G=(V,E,c)$ is a finite network, $U\subsetneq{V}$, and $f:V\rightarrow{\mathbb{R}}$ is harmonic on $U$. Define: 
    \begin{equation*}
        \partial{U}=\{w\in{V}\setminus{U}:w\sim{u} \hspace{5pt}\text{for some}\hspace{5pt} u\in{U} \}
    \end{equation*}
    Then: 
    \begin{equation*}
        \max_{u\in{U}}h(u)\leq{\max_{v\in{\partial{U}}}}h(v)
    \end{equation*} 
\end{prop}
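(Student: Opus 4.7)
The plan is to run the standard strong maximum principle argument by contradiction. Set $M := \max_{u \in U} f(u)$ and suppose toward a contradiction that $M > \max_{v \in \partial U} f(v)$. Let $S := \{u \in U : f(u) = M\}$, which is a nonempty subset of $U$.

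The key step is to show that $S$ is closed under taking $G$-neighbors. Fix $u \in S$. By the definition of $\partial U$, every neighbor $y$ of $u$ lies in $U \cup \partial U$. Hence $f(y) \le M$ always, with strict inequality whenever $y \in \partial U$. Harmonicity of $f$ at $u$ rewrites as
\[
M \;=\; f(u) \;=\; \frac{1}{\pi_u}\sum_{y:\,y\sim u} c(u,y)\, f(y),
\]
which expresses $M$ as a convex combination of values $\le M$ with all weights $c(u,y)/\pi_u$ strictly positive. Equality can only hold if $f(y) = M$ for every neighbor $y$ of $u$. In particular no neighbor of $u$ can lie in $\partial U$, so every neighbor of $u$ belongs to $U$ and in fact to $S$.

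Iterating this closure property, the entire connected component of any $u_0 \in S$ inside $G$ is contained in $S \subseteq U$. Since $U \subsetneq V$ and we are working on a (connected) network, this contradicts the existence of a vertex in $V \setminus U$ that is reachable from $u_0$, so the contradiction assumption fails and $\max_{u\in U} f(u) \le \max_{v\in\partial U} f(v)$.

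The only substantive ingredient is the ``propagation'' step, which relies on strict positivity of the conductances $c(e) > 0$ to conclude that equality in the weighted average forces equality on every neighbor; the remainder is a routine connectedness argument. I do not expect any serious obstacle here.
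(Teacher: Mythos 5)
Your proof is correct and is exactly the standard argument that the paper gestures at when it calls the maximum principle ``immediate'' from the averaging formula $f(x)=\pi_x^{-1}\sum_{y\sim x}c(x,y)f(y)$: strict positivity of the conductances forces every neighbor of a maximizer to also be a maximizer, and connectivity then propagates the set of maximizers to all of $V$, contradicting $U\subsetneq V$. One small point worth flagging: the paper's definition of a finite network does not explicitly require $G$ to be connected, yet both the statement (so that $\partial U\neq\emptyset$) and your propagation step genuinely need it, as you note parenthetically; this is harmless in context since $G^{\bullet}$ and $G^{\circ}$ are always connected for an orthodiagonal map. An equivalent one-line route, hinted at by the paper's subsequent remark that $(f(S_{n\wedge T_{V\setminus U}}))_{n\ge 0}$ is a martingale, is optional stopping: $f(u)=\mathbb{E}^{u}\big[f(S_{T_{V\setminus U}})\big]$ and the walk exits $U$ through $\partial U$, giving the bound directly.
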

\noindent Given a finite network $G=(V,E,c)$, a \textbf{simple random walk} on $G$ is a discrete-time Markov process $(S_{n})_{n\geq{0}}$ with transition probabilities: 
\begin{equation*}
    \mathbb{P}(S_{n}=y\hspace{1pt}|\hspace{1pt}S_{n-1}=x)=\frac{c(x,y)}{\pi_{x}}1_{(y\sim{x})}
\end{equation*}
In particular, notice that if $f:V\rightarrow{\mathbb{R}}$ is harmonic on $U\subseteq{V}$ and $T_{V\setminus{U}}$ is the hitting time of $V\setminus{U}$ by our simple random walk, then $(f(S_{n\wedge{T_{V\setminus{U}}}}))_{n\geq{0}}$ is a martingale. 
\\ \\
A \textbf{plane graph} is a graph $G=(V,E)$ with a fixed, proper embedding in the plane. We frequently identify the vertices and edges of $G$ with the corresponding points and curves in the plane in our embedding. The \textbf{faces} of a plane graph $G$ are the connected components of $\mathbb{R}^{2}$ minus the edges and vertices of $G$. All but one of its faces will be bounded: these are called \textbf{inner faces}. The unbounded face is known as the \textbf{outer face}. Given a plane graph $G$, let $\widehat{G}$ denote the subdomain of $\mathbb{C}$ formed by taking the interior of the union of the faces, vertices, and edges of $G$.   \\ \\ 
A finite \textbf{orthodiagonal map}\footnote{Our definition of an orthodiagonal map agrees with the one in \cite{GJN20} but differs from the one in \cite{BP24}, where an orthodiagonal map can have multiple outer faces, in addition to the unbounded face. These additional outer faces, which need not be quadrilaterals, are thought of as ``holes" in our orthodiagonal map. Using the language of \cite{BP24}, what we call an orthodiagonal map is a simply connected orthodiagonal map.} is a finite, connected plane graph such that: 
\begin{itemize}
    \item all edges are straight line segments.
    \item all faces are quadrilaterals with orthogonal diagonals.
    \item the boundary of the outer face is a simple closed curve. 
\end{itemize}
We allow non- convex quadrilaterals whose diagonals do not intersect. Intuitively, an orthodiagonal map consists of a graph and its dual, embedded in the plane so that the primal and dual edges are orthogonal. To see what we mean by this, observe first that any finite orthodiagonal map is bipartite: if our orthodiagonal map contained an odd cycle, there would be a face of odd degree inside this cycle. Hence, we have a bipartition of the vertices $V=V^{\bullet}\sqcup{V^{\circ}}$. The vertices of $V^{\bullet}$ are known as the \textbf{primal} vertices of $G$. The vertices of $V^{\circ}$ are known as the \textbf{dual} vertices. These give rise to the primal and dual graphs $G^{\bullet}=(V^{\bullet}, E^{\bullet})$ and $G^{\circ}=(V^{\circ}, E^{\circ})$. $G^{\bullet}$ is formed by connecting any pair of primal vertices that share an inner face in $G$. Similarly, $G^{\circ}$ is formed by connecting any pair of dual vertices that share an inner face in $G$. Since the inner faces of $G$ are all quadrilaterals, each inner face corresponds to one primal and one dual edge. In this way, there is a natural correspondence between primal and dual edges. Furthermore, our condition that the inner faces of $G$ have orthogonal diagonals is equivalent to requiring that corresponding primal and dual edges are orthogonal. Given an inner face $Q$ of $G$, let $e^{\bullet}_{Q}$ be the edge of $G^{\bullet}$ corresponding to the primal diagonal of $Q$. Similarly, $e^{\circ}_{Q}$ is the edge of $G^{\circ}$ corresponding to the dual diagonal of $Q$. Conversely, if $\{u,v\}$ is a primal (resp. dual) edge of our orthodiagonal map, $Q=Q_{u,v}$ denotes the quadrilateral in our orthodiagonal map that has $\{u,v\}$ as its primal (resp. dual) diagonal. Equivalently, we write $Q=[u,r,v,s]$, where $u,v,r,s$ are the vertices of $Q$ listed in counterclockwise order so that $u\in{V^{\bullet}}$. Notice that using this notation, $[u,r,v,s]=[v,s,u,r]$.   \\ \\
Observe that the square lattice, the triangular lattice and the hexagonal lattice all have this property that primal and dual edges are orthogonal. More generally, isoradial graphs, which have been studied widely in the context of critical statistical physics in 2D (i.e. see \cite{Ke02} and \cite{CS11}), are precisely orthodiagonal maps whose inner faces are all rhombii. Furthermore, by the double circle packing theorem, a wide variety of planar graphs admit an orthodiagonal embedding (see Section 2 of \cite{GJN20}). \\ \\
The boundary vertices of $G^{\bullet}$, which we denote by $\partial{V^{\bullet}}$, are the vertices of $V^{\bullet}$ that lie along the boundary of the outer face of $G$. The remaining vertices of $G^{\bullet}$ are called interior vertices and are denoted by $\text{Int}(V^{\bullet})$. $\partial{V^{\circ}}$ and $\text{Int}(V^{\circ})$, the boundary and interior vertices of $G^{\circ}$, are defined analogously. The graphs $G^{\bullet}$ and $G^{\circ}$ are both connected plane graphs (see Lemma 3.2 of \cite{GJN20}). Furthermore, as we alluded to earlier, $G^{\bullet}$ and $G^{\circ}$ are almost duals of each other, except the outer face of $G^{\bullet}$ contains all of the boundary vertices of $G^{\circ}$ and the outer face of $G^{\circ}$ contains all of the boundary vertices of $G^{\bullet}$. \\ \\
A function $F:V\rightarrow{\mathbb{C}}$ is \textbf{discrete holomorphic} if for every inner face $Q=[v_{1},w_{1},v_{2},w_{2}]$ of $G$, $F$ satisfies the discrete Cauchy- Riemann equations:
\begin{equation}
\label{eqn: discrete Cauchy-Riemann}
    \frac{F(v_{2})-F(v_{1})}{v_{2}-v_{1}}=\frac{F(w_{2})-F(w_{1})}{w_{2}-w_{1}}
\end{equation}
Here, we think of $v_{1},v_{2},w_{1},w_{2}$ as points in the complex plane. Notice that if $F|_{V^{\bullet}}$ is strictly real then, up to a constant, $F|_{V^{\circ}}$ is purely imaginary. This is typical of applications of discrete complex analysis. That is, the real part of our discrete holomorphic function typically lives on the primal vertices, while the imaginary part lives on the dual vertices. Given an orthodiagonal map $G=(V,E)$, a \textbf{contour} in $G$ is a sequence of directed edges $\gamma=(\vec{e_{1}}, \vec{e_{2}}, \vec{e_{3}},...,\vec{e_{m}})$ that forms a nearest neighbour path. That is, if $\vec{e_{i}}=(e_{i}^{-}, e_{i}^{+})$, then for each $i=1,2,3,...,m-1$, we have that $e_{i}^{+}=e_{i+1}^{-}$. A contour $\gamma=(\vec{e_{1}}, \vec{e_{2}}, \vec{e_{3}},...,\vec{e_{m}})$ is \textbf{closed} if the corresponding path starts and ends at the same point: $e_{m}^{+}=e_{1}^{-}$. Given a function $F:V\rightarrow{\mathbb{C}}$, its integral over a contour $\gamma$ is given by: 
\begin{equation*}
    \sum_{\substack{\vec{e}\in{\gamma} \\ \vec{e}=(e^{-},e^{+})}}\big(F(e^{-})+F(e^{+})\big)(e^{+}-e^{-})
\end{equation*}
It is not hard to see that if $F$ is discrete holomorphic, then $F$ integrates to $0$ over any closed contour in $G$: observe that $F$ satisfies the discrete Cauchy-Riemann equations for an inner face $Q$ of $G$ iff $F$ integrates to $0$ over the closed contour that traces out the boundary of $Q$, counterclockwise. From here, notice that any integral of $F$ over a closed contour $\gamma$ is an integer linear combination of contour integrals of $F$ over boundaries of inner faces of $G$ enclosed by $\gamma$. This is the discrete analogue of Morera's theorem. In particular, from Equation \ref{eqn: discrete Cauchy-Riemann} it is clear that the function $f(z)=z$ is discrete holomorphic.\\ \\
Orthodiagonal maps come with a natural set of edge weights $c:E^{\bullet}\sqcup{E^{\circ}}\rightarrow{(0,\infty)}$ defined as follows: 
\begin{align*}
    c(e^{\bullet}_{Q})&=\frac{|e^{\circ}_{Q}|}{|e^{\bullet}_{Q}|} & c(e^{\circ}_{Q})=\frac{|e^{\bullet}_{Q}|}{|e^{\circ}_{Q}|}
\end{align*}
where for any edge $e\in{E^{\bullet}\sqcup{E^{\circ}}}$, $|e|$ is the length of that edge. 
These edge weights were first introduced by Duffin \cite{D68} and independently by Dubejko \cite{D99}. Let $c^{\bullet}$ and $c^{\circ}$ be the restriction of $c$ to $E^{\bullet}$ and $E^{\circ}$ respectively. With these edge weights, $G^{\bullet}=(V^{\bullet},E^{\bullet},c^{\bullet})$ and $G^{\circ}=(V^{\circ}, E^{\circ}, c^{\circ})$ are finite networks. Let $\Delta^{\bullet}$ and $\Delta^{\circ}$ denote the discrete Laplacian on $G^{\bullet}$ and $G^{\circ}$ respectively. A function
$f:V^{\bullet}\rightarrow{\mathbb{R}}$ is \textbf{harmonic} if it is harmonic at all points $x\in{\text{Int}(V^{\bullet})}$. Harmonicity for functions on $G^{\circ}$ is defined analogously. By the discrete Cauchy-Riemann equations (\ref{eqn: discrete Cauchy-Riemann}), if $F:V\rightarrow{\mathbb{C}}$ is holomorphic, the real and imaginary parts of $F$ are both harmonic. That is, $Re(F)|_{V^{\bullet}}$, $Im(F)|_{V^{\bullet}}$ are both discrete harmonic functions on $G^{\bullet}$ and $Re(F)|_{V^{\circ}}$, $Im(F)|_{V^{\circ}}$ are both discrete harmonic functions on $G^{\circ}$. Conversely, if $h:V^{\bullet}\rightarrow{\mathbb{R}}$ is discrete harmonic, up to an additive constant, there is a unique function $\widetilde{h}:V^{\circ}\rightarrow{\mathbb{R}}$ so that the function $F=h+i\widetilde{h}:V\rightarrow{\mathbb{C}}$, whose restriction to $V^{\bullet}$ is given by $h$ and whose restriction to $V^{\circ}$ is given by $i\widetilde{h}$ is discrete holomorphic.   \\ \\ 
 We saw earlier that on any orthodiagonal map $G$, the function $f(z)=z$ is discrete holomorphic. Hence, the functions $z\mapsto{\text{Re}(z)}$ and $z\mapsto{\text{Im}(z)}$ are both discrete harmonic on $G^{\bullet}$ and $G^{\circ}$ and so simple random walks on both $G^{\bullet}$ and $G^{\circ}$ are martingales.

\section{The Main Result and an Outline of its Proof}
\label{sec: outline of main theorem proof}

 As we alluded to in Section \ref{subsec: A Polynomial Rate of Convergence for the Dirichlet Problem on Orthodiagonal Maps}, our goal is to prove the following: 
\begin{thm}
\label{thm: polynomial rate of convergence for Dirichlet problem on OD maps}
    Suppose $\Omega\subseteq{\mathbb{R}^{2}}$ is a bounded simply connected domain, $g$ is a $\alpha$-H\"older on $\mathbb{R}^{2}$ and $G=(V^{\bullet}\sqcup{V^{\circ}},E)$ is an orthodiagonal map with edges of length at most $\varepsilon$ so that for each point $z\in{\partial{\widehat{V}}}$, $\text{dist}(z,\partial{\Omega})\leq{\varepsilon}$. Let $h$ be the solution to the continuous Dirichlet problem on $\Omega$ with boundary data given by $g$: 
    \begin{align*}
        \Delta{h}(x)&=0 \hspace{24pt}\text{for all $x\in{\Omega}$} \\ 
        h(x)&=g(x) \hspace{10pt}\text{for all $x\in{\partial{\Omega}}$}
    \end{align*}  
    Let $h^{\bullet}$ be the solution to the discrete Dirichlet problem on $G^{\bullet}$ with boundary data given by $g$:
    \begin{align*}
        \Delta^{\bullet}{h^{\bullet}}(v)&=0 \hspace{24pt}\text{for all $v\in{\text{Int}(V^{\bullet})}$} \\ 
        h^{\bullet}(v)&=g(v) \hspace{10pt}\text{for all $v\in{\partial{V^{\bullet}}}$}
    \end{align*}  
    Let $\beta\in{(0,1)}$ be the absolute constant from Lemma \ref{lem: weak Beurling on OD maps}. Then for any $v\in{V^{\bullet}}$, we have that: 
    \begin{equation*}
        |h^{\bullet}(v)-h(v)|\leq{\begin{cases}
            \big(C_{1}\|g\| + C_{2}\frac{\beta}{\beta-\alpha}\|g\|_{\alpha}\text{diam}(\Omega)^{\alpha}\big)\big(\frac{\varepsilon}{\text{diam}(\Omega)}\big)^{\lambda(\alpha,\beta)} & \text{if $\alpha\in{(0,\beta)}$} \\ \\
            \big(C_{1}\|g\| + C_{2}\|g\|_{\alpha}\text{diam}(\Omega)^{\alpha}\big)\big(\log\big(\frac{\text{diam}(\Omega)}{\varepsilon}\big)\big)\big(\frac{\varepsilon}{\text{diam}(\Omega)}\big)^{\lambda(\alpha,\beta)} & \text{if $\alpha=\beta$} \\ \\
            \big(C_{1}\|g\| + C_{2}\frac{\alpha}{\beta-\alpha}\|g\|_{\alpha}\text{diam}(\Omega)^{\alpha}\big)\big(\frac{\varepsilon}{\text{diam}(\Omega)}\big)^{\lambda(\alpha,\beta)} & \text{if $\alpha\in{(\beta,1)}$}
        \end{cases}}
    \end{equation*}
    where $C_{1},C_{2}>0$ are absolute constants, $\|g\|_{\alpha}$ is the $\alpha$-H\"older norm of $g$: 
    \begin{equation*}
        \|g\|_{\alpha}=\sup\limits_{\substack{x,y\in{\mathbb{R}^{2}}\\ x\neq{y}}}\frac{|g(y)-g(x)|}{|x-y|^{\alpha}}
    \end{equation*}
     and the function $\lambda:(0,1)^{2}\rightarrow{(0,\infty)}$ is given by: 
    \begin{equation*}
        \lambda(\alpha,\beta)=\min_{r\in{[0,1]}}\max_{s\in{(r,1)}}\Xi(\alpha,\beta,r,s), 
    \end{equation*}
    where: 
    \begin{equation*}
        \Xi(\alpha,\beta,r,s)=
        \max\{(\alpha\wedge{\beta})r, \min\Big\{\beta(s-r),\frac{\beta}{1+\beta}-(2+\frac{\beta}{1+\beta})s+\frac{r}{2}, (\alpha\wedge{\beta})s\Big\}\}
    \end{equation*}
\end{thm}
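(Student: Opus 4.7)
The plan is to bound $|h^\bullet(v)-h(v)|$ by inserting an intermediate function $H$ obtained by convolving $h^\bullet$ against a smooth mollifier at a mesoscopic scale, then decomposing the error into three pieces: (i) $|h^\bullet(v)-H(v)|$, controlled by the mesoscopic Lipschitz/Hölder regularity promised in Section \ref{subsec: Lipschitz Regularity in the Bulk on a Mesoscopic Scale for Harmonic Functions on Orthodiagonal Maps}; (ii) $|H(v)-h(v)|$, controlled via the maximum principle applied to $H-h$ on a slight inward offset of $\Omega$, using that $H$ is ``almost harmonic'' (its Laplacian is quantitatively small) and that $H$ and $h$ agree up to Hölder errors on the offset boundary; and (iii) the mismatch of boundary data between $\partial V^\bullet$ and $\partial\Omega$, bounded via $\|g\|_\alpha$ together with the hypothesis $\text{dist}(z,\partial\Omega)\leq\varepsilon$ for $z\in\partial\widehat{V}$. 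Two mesoscopic scales enter the argument: a boundary layer of thickness $\rho=(\varepsilon/\text{diam}(\Omega))^{r}\text{diam}(\Omega)$ that we remove, and a mollification radius $\sigma=(\varepsilon/\text{diam}(\Omega))^{s}\text{diam}(\Omega)$ with $r<s$, so that $\sigma\ll\rho$. The optimization over $(\rho,\sigma)$ is what produces the $\min$-$\max$ expression defining $\lambda(\alpha,\beta)$.

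First I would handle $v\in V^\bullet$ close to $\partial\Omega$. For $\text{dist}(v,\partial\Omega)\leq\rho$, Lemma \ref{lem: weak Beurling on OD maps} shows that the discrete random walk from $v$ hits $\partial V^\bullet$ from within a disk of radius $O(\rho)$ with probability at least $1-C(\varepsilon/\rho)^{\beta}$. On this event, $\alpha$-Hölder continuity of $g$ contributes $\|g\|_\alpha\rho^\alpha$; on the complementary event we pay at most $2\|g\|(\varepsilon/\rho)^{\beta}$. The continuous Beurling estimate for Brownian motion gives the identical bound for $h(v)$. Writing $\rho=(\varepsilon/\text{diam}(\Omega))^{r}\text{diam}(\Omega)$, the sum of these terms scales as $(\varepsilon/\text{diam}(\Omega))^{(\alpha\wedge\beta)r}$ (up to $\|g\|$ and $\|g\|_\alpha\text{diam}(\Omega)^\alpha$ prefactors), matching the first argument of the outer $\max$ in $\Xi$.

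Next, for $v$ at distance exceeding $\rho$ from $\partial\Omega$, take a smooth radially symmetric mollifier $\phi_\sigma$ with $\int\phi_\sigma=1$ and $\text{supp}(\phi_\sigma)\subseteq B(0,\sigma)$, and define $H=h^\bullet * \phi_\sigma$ on the bulk $\Omega_\sigma$ (after extending $h^\bullet$ suitably, e.g. by a discrete harmonic extension). The core analytic input, following the program outlined in Section \ref{subsec: Lipschitz Regularity in the Bulk on a Mesoscopic Scale for Harmonic Functions on Orthodiagonal Maps}, is a small-Laplacian estimate of the form $\|\Delta H\|_\infty\leq C\|h^\bullet\|(\varepsilon/\sigma)^{\beta}\sigma^{-2}$: a discrete summation by parts exchanges the continuous Laplacian acting on $\phi_\sigma$ with the discrete Laplacian acting on $h^\bullet$, which vanishes in the bulk, while boundary contributions near $\text{supp}(\nabla^2\phi_\sigma)$ are tamed by the weak Beurling estimate to gain the $\beta$-power improvement. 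Combined with the mesoscopic regularity inequality of Section \ref{subsec: Lipschitz Regularity in the Bulk on a Mesoscopic Scale for Harmonic Functions on Orthodiagonal Maps}, this yields $|h^\bullet(v)-H(v)|$ small in a precise quantitative sense depending on $\sigma$. Finally, the maximum principle applied to $H-h$ on $\Omega_\rho$ converts $\|\Delta H\|_\infty$ and the boundary mismatch on the shell $\{\text{dist}(\cdot,\partial\Omega)=\rho\}$ into a bulk bound of order $\|\Delta H\|_\infty\cdot\text{diam}(\Omega)^2+\text{boundary error}$.

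The three resulting errors — the Hölder/Beurling boundary contribution scaling as $(\varepsilon/\text{diam}(\Omega))^{(\alpha\wedge\beta)s}$, the mesoscopic regularity contribution scaling as $(\varepsilon/\text{diam}(\Omega))^{\beta(s-r)}$, and the Poisson-propagation contribution matching $\tfrac{\beta}{1+\beta}-(2+\tfrac{\beta}{1+\beta})s+\tfrac{r}{2}$ — are precisely the three arguments of the inner $\min$ in $\Xi(\alpha,\beta,r,s)$. For each $r$ we are free to choose the mollification scale parameter $s$ to maximize the effective exponent, producing the $\max_s$; but $r$ is dictated by the worst-case location of $v$ and so is minimized over. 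The trichotomy $\alpha<\beta,\,\alpha=\beta,\,\alpha>\beta$ emerges from summing the boundary error over dyadic scales between $\sigma$ and $\text{diam}(\Omega)$: the sum is geometric when $\alpha\neq\beta$ (producing the $1/|\beta-\alpha|$ factor) and harmonic when $\alpha=\beta$ (producing the extra $\log$). The main obstacle I foresee is the small-Laplacian estimate for $H$: it requires expanding $\phi_\sigma$ to second order around each vertex, using the discrete identity $\sum_{y}c(x,y)(y-x)=0$ (which follows from discrete holomorphicity of $z\mapsto z$, see Section \ref{sec: A Brief Introduction to Orthodiagonal Maps}) to kill the leading order, and invoking weak Beurling to control the boundary remainder. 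Extracting the correct power of $\varepsilon/\sigma$ with the right prefactors is delicate and is where the precise geometry of orthodiagonal maps enters.
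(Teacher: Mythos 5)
Your high-level architecture matches the paper's: decompose into a near-boundary regime handled via discrete/continuous Beurling plus H\"older continuity of $g$, and a bulk regime handled by mollifying $h^\bullet$, showing the mollification is almost harmonic, and comparing to $h$ via the Green function, with the min--max over $(r,s)$ emerging from a boundary-layer scale and a mollification scale. However, there are two genuine gaps in the bulk regime that prevent your argument from producing the stated $\Xi$.

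First, your mechanism for the small-Laplacian estimate --- ``expanding $\phi_\sigma$ to second order around each vertex, using the discrete identity $\sum_y c(x,y)(y-x)=0$ to kill the leading order'' --- is a \emph{pointwise} Taylor argument, and it fails for general orthodiagonal maps. After killing the linear term, the quadratic term $\tfrac12\sum_y c(x,y)(y-x)^T D^2\phi\,(y-x)$ does \emph{not} reduce to $\tfrac12\Delta\phi\cdot\mathrm{Area}(N_x)$ unless the graph is isoradial: the paper points this out explicitly and instead proves that the discrete and continuous Laplacians agree only \emph{on average} over squares of an intermediate scale $\ell$ with $\varepsilon\ll\ell\ll\sigma$ (Proposition \ref{prop: continuous Laplacian is average of discrete Laplacian}, via discrete integration by parts and discrete Green's theorem applied to the harmonic functions $\mathrm{Re}\,z,\,\mathrm{Im}\,z$). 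The $\beta$-H\"older regularity then absorbs the oscillation of $h^\bullet$ across each square, and optimizing in $\ell$ is what produces the exponent $\varepsilon^{\beta/(1+\beta)}\sigma^{-2}d^{-\beta/(1+\beta)}$ (Proposition \ref{prop: convolution has small Laplacian}), \emph{not} your claimed $(\varepsilon/\sigma)^\beta\sigma^{-2}$. The two are inconsistent, and this inconsistency carries through to the second argument of the inner $\min$ in $\Xi$: a bound of the form $\varepsilon^\beta\sigma^{-2-\beta}\cdot\mathrm{diam}(\Omega)^2$ would give exponent $\beta-(2+\beta)s$, not $\tfrac{\beta}{1+\beta}-(2+\tfrac{\beta}{1+\beta})s+\tfrac r2$. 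Second, your ``bulk bound of order $\|\Delta H\|_\infty\cdot\mathrm{diam}(\Omega)^2$'' from the maximum principle is too crude: the paper instead interprets $\int G_{\Omega^\delta}(\cdot,z_0)\,dA$ as the expected exit time $\mathbb{E}^{z_0}T_{\partial\Omega}$ and bounds it by $d^{1/2}\mathrm{diam}(\Omega)^{3/2}$ using the strong Beurling estimate; this $d^{1/2}$ is precisely the source of the $+\tfrac r2$ you correctly list in $\Xi$ but cannot obtain from $\mathrm{diam}^2$. You would need to add Proposition \ref{prop: continuous Laplacian is average of discrete Laplacian} and the optional-stopping/Beurling estimate for $\mathbb{E}^{z_0}T_{\partial\Omega}$ to make the exponents come out.
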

\noindent The idea behind the proof is as follows. Fix a point $z\in{V^{\bullet}}$. Our goal is to show that $h(z)$ and $h^{\bullet}(z)$ are close. To do this, we will consider two cases.

\subsection{Proof Outline: $z$ Near the Boundary}
\label{subsec: outline of proof near boundary}

If $z$ is close to the boundary of $\Omega$ (and therefore the boundary of $\widehat{G}$), the H\"older regularity of $g$ tells us that, near the boundary, the value of $h(z)$ is close to the value of the boundary data $g$ at nearby points of $\partial{\Omega}$. Similarly, the H\"older regularity of $g$ tells us that the value of $h^{\bullet}(z)$ is close to the value of the boundary data $g$ at nearby points of $\partial{V^{\bullet}}$. Since $\partial{V^{\bullet}}$ is close to $\partial{\Omega}$ and $g$ is $\alpha$-H\"older, we conclude that $h(z)$ and $h^{\bullet}(z)$ are close. Thus, for $z$ near the boundary, the fact that $h(z)$ and $h^{\bullet}(z)$ are close follows from estimates for the modulus of continuity of solutions to the discrete and continuous Dirichlet problems with H\"older boundary data.   
\\ \\
The key ingredient in the proof of these modulus of continuity estimates for the continuous Dirichlet problem is the following estimate for planar Brownian motion: 
\begin{thm}
\label{thm: cts Beurling estimate}
(Beurling's Estimate) Let $B(0,R)\subseteq{\mathbb{R}^{2}}$ be the ball of radius $R$ about the origin in $\mathbb{R}^{2}$, $z\in{B(0,R)}$ and $K\subseteq{\mathbb{R}^{2}}$ is a connected, compact subset of the plane so that $0\in{K}$ and $K\cap{\partial{B(0,R)}}\neq{\emptyset}$. Let $(B_{t})_{t\geq{0}}$ be a planar Brownian motion and let $T_{\partial{B(0,R)}}$ and $T_{K}$ be the hitting times of $\partial{B(0,R)}$ and $K$ by this Brownian motion. Then: 
\begin{equation*}
    \mathbb{P}^{z}(T_{\partial{B(0,R)}}<T_{K})\leq{C\Big(\frac{|z|}{R}\Big)^{1/2}}
\end{equation*}
where $C>0$ is an absolute constant. 
\end{thm}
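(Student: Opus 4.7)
The plan is classical: reduce to a radial slit by symmetrization, then compute explicitly via a conformal map. By Brownian scaling we may assume $R=1$; the case $z \in K$ is trivial, so assume $z \in B(0,1) \setminus K$. The first step is Beurling's projection theorem (see for example Ahlfors's \emph{Conformal Invariants}): since $K$ is a continuum containing $0$ and touching $\partial B(0,1)$, its radial projection is the full unit segment, and the theorem yields
\[
\mathbb{P}^z\bigl(T_{\partial B(0,1)} < T_K\bigr) \;\leq\; \mathbb{P}^{-|z|}\bigl(T_{\partial B(0,1)} < T_{[0,1]}\bigr),
\]
so it suffices to estimate the escape probability from $-|z|$ in the slit disk $B(0,1) \setminus [0,1]$.

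For this, apply the conformal map $\phi(re^{i\theta}) = \sqrt{r}\,e^{i\theta/2}$ with $\theta \in (0, 2\pi)$. It is a conformal isomorphism from $B(0,1) \setminus [0,1]$ onto the upper half-disk $D^+ = \{w : |w|<1,\,\mathrm{Im}(w) > 0\}$, sending the arc $\partial B(0,1) \setminus \{1\}$ to the upper semicircle, both sides of the slit to the diameter $(-1,1)$, and $-|z|$ to $w_0 = i\sqrt{|z|}$. By conformal invariance of planar Brownian motion (up to time change), the escape probability equals $u(w_0) := \mathbb{P}^{w_0}(T_{\mathrm{arc}} < T_{\mathrm{diameter}})$. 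Extending $u$ to $B(0,1)$ by odd Schwarz reflection, $\tilde u(\overline w) := -u(w)$, produces a bounded harmonic function on the disk with boundary values $+1$ on the upper semicircle and $-1$ on the lower. The Möbius transformation $w \mapsto (1+w)/(1-w)$ sends the disk to the right half-plane and the two semicircles to the positive and negative imaginary axes, respectively, so the closed form is
\[
\tilde u(is) \;=\; \frac{2}{\pi}\arctan\!\left(\frac{2s}{1-s^2}\right).
\]
Since $\arctan x \leq x$ and $1-s^2 \geq 3/4$ for $s \leq 1/2$, this is bounded by $16s/(3\pi) < 2s$; for $s \geq 1/2$ the trivial bound $u \leq 1 \leq 2s$ works. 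Hence $u(w_0) \leq 2\sqrt{|z|}$, and reintroducing the scale $R$ gives the required $\mathbb{P}^z(T_{\partial B(0,R)} < T_K) \leq C(|z|/R)^{1/2}$.

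The substantive part of this outline is the first step. Beurling's projection theorem itself is classical, but applying it with an arbitrary starting point $z$ (rather than the centre $0$) forces the symmetrized starting point $-|z|$ to be placed antipodally to the projected slit. This can be obtained either as a direct form of Beurling's inequality, or, perhaps more transparently, by circular symmetrization of $K$ combined with the monotonicity of harmonic measure, choosing the starting point on the circle $\{|w| = |z|\}$ that is furthest from the symmetrized obstacle. The conformal mapping step and the explicit computation via Möbius / Poisson are then entirely standard.
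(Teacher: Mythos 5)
Your proof is correct, and it follows precisely the route the paper itself gestures at but does not carry out: the paper only cites Section~3.8 of \cite{CIPbook} for a proof, and then remarks that Theorem~\ref{thm: cts Beurling estimate} is a direct consequence of the Beurling projection theorem together with the explicit computation for the radial slit $[-1,0]$ in $\mathbb{D}$, which is exactly the reduction and computation you perform (conformal map $w\mapsto\sqrt{w}$ to the half-disk, Schwarz reflection, M\"obius map to the half-plane). Your explicit constant works, and the one subtlety you correctly flag --- that the projected slit must be placed antipodally to $|z|$ on the positive real axis --- is exactly the extremal configuration in the Beurling projection theorem (Theorem~9.2, Chapter~III of \cite{GM05}).
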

\noindent For a proof of Theorem \ref{thm: cts Beurling estimate}, see Section 3.8 of \cite{CIPbook}. In plain language, Theorem \ref{thm: cts Beurling estimate} gives us an upper bound for the probability that a planar Brownian motion, started at $z$, escapes the ball $B(0,R)$ without hitting $K$. 
This is known as the strong Beurling estimate. The word ``strong" here alludes to the fact that the exponent of $1/2$ in this theorem is sharp. This can be seen by evaluating the probability that Brownian motion started at $r\in{(0,1)}$ exits the unit disk before hitting the line segment $[-1,0]$. In fact, Theorem \ref{thm: cts Beurling estimate} is a direct consequence of a stronger result, known as the Beurling projection theorem (see Theorem 9.2 in Chapter III of \cite{GM05}), which tells us that given a Brownian motion started at a point $z\in{B(0,R)}$, a line segment stretching from $0$ to $Re^{-i\hspace{1pt}\text{arg}(z)}$ is the connected, compact set containing $0$ and intersecting $\partial{B(0,R)}$, that maximizes the probability that a planar Brownian motion, started at $z$, escapes the ball $B(0,R)$ without hitting $K$. An equivalent reformulation of the strong Beurling estimate is as follows: 
\begin{prop}
\label{prop: cts Beurling estimate}
Suppose $\Omega\subsetneq{\mathbb{C}}$ is a simply connected domain, $(B_{t})_{t\geq{0}}$ is a planar Brownian motion and $T_{\partial{\Omega}}$ is the hitting of $\partial{\Omega}$ by this planar Brownian motion. Then for any positive real number $r>0$ and any $z\in{\Omega}$ we have that: 
\begin{equation*}
    \mathbb{P}^{z}(|B_{T_{\partial{\Omega}}}-z|\geq{r})\leq{C\Big(\frac{d_{z}}{r}\Big)^{1/2}}
\end{equation*}
where $C>0$ is an absolute constant and $d_{z}=\text{dist}(z,\partial{\Omega})$.
\end{prop}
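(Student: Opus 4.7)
The plan is to deduce Proposition \ref{prop: cts Beurling estimate} from the strong Beurling estimate (Theorem \ref{thm: cts Beurling estimate}) by recognizing the event $\{|B_{T_{\partial\Omega}} - z| \geq r\}$ as an ``avoiding a continuum'' event in an appropriately chosen ball centered near $z$. I may assume $r > 2 d_z$, since otherwise $(d_z/r)^{1/2} \geq 1/\sqrt{2}$ and the stated bound is trivial after adjusting the absolute constant. For the reduction itself, whenever the Brownian motion hits $\partial \Omega$ inside $B(z, r)$ one has $|B_{T_{\partial\Omega}} - z| < r$; hence $\{|B_{T_{\partial\Omega}} - z| \geq r\} \subseteq \{T_{\partial B(z, r)} < T_{\partial \Omega}\}$, and it suffices to bound the probability of the latter.

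To build the geometric setup for Theorem \ref{thm: cts Beurling estimate}, I fix a point $w_0 \in \partial\Omega$ realizing the distance $d_z$, let $K_0$ be the connected component of $\mathbb{C} \setminus \Omega$ containing $w_0$, and set $R := r - d_z \geq r/2$. Because $\Omega$ is simply connected and $\Omega \subsetneq \mathbb{C}$, the set $\widehat{\mathbb{C}} \setminus \Omega$ is connected in the Riemann sphere, so every connected component of $\mathbb{C} \setminus \Omega$ --- in particular $K_0$ --- is unbounded. Let $K$ be the connected component of $K_0 \cap \overline{B(w_0, R)}$ containing $w_0$. A standard boundary bumping argument from continuum theory, applied to the continuum $K_0 \cup \{\infty\} \subseteq \widehat{\mathbb{C}}$, ensures that $K$ must meet $\partial B(w_0, R)$. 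Thus $K$ is a compact, connected subset of $\overline{B(w_0, R)}$ which contains $w_0$ and a point of $\partial B(w_0, R)$, and which is contained in $\mathbb{C} \setminus \Omega$.

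Finally, I combine everything. Since $B(w_0, R) \subseteq B(z, r)$ by the triangle inequality, every Brownian path exiting $B(z, r)$ has already exited $B(w_0, R)$, so $T_{\partial B(w_0, R)} \leq T_{\partial B(z, r)}$; and since $K \subseteq \mathbb{C} \setminus \Omega$ while $(B_t)$ is continuous and starts in $\Omega$, we have $T_{\partial \Omega} \leq T_K$. Therefore
\begin{equation*}
\{T_{\partial B(z, r)} < T_{\partial \Omega}\} \;\subseteq\; \{T_{\partial B(w_0, R)} < T_K\}.
\end{equation*}
Translating so $w_0$ is the origin and applying Theorem \ref{thm: cts Beurling estimate} with the ball $B(0, R)$, the continuum $K$, and starting point at distance $d_z$ from the origin yields
\begin{equation*}
\mathbb{P}^z\big(T_{\partial B(w_0, R)} < T_K\big) \;\leq\; C\Big(\frac{d_z}{R}\Big)^{1/2} \;\leq\; C\sqrt{2}\,\Big(\frac{d_z}{r}\Big)^{1/2},
\end{equation*}
which is the desired bound after adjusting the constant.

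The main technical obstacle is the topological construction of $K$: justifying unboundedness of $K_0$ via simple connectedness of $\Omega$, and invoking boundary bumping to guarantee that the component of $K_0 \cap \overline{B(w_0, R)}$ through $w_0$ actually reaches $\partial B(w_0, R)$. All remaining steps are straightforward consequences of the triangle inequality and the definitions of the hitting times involved.
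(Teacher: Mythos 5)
The paper itself offers no proof of Proposition \ref{prop: cts Beurling estimate}: it merely asserts that the statement is ``equivalent to Theorem \ref{thm: cts Beurling estimate}'' and moves on. Your proposal supplies the missing implication (Theorem \ref{thm: cts Beurling estimate} $\Rightarrow$ Proposition \ref{prop: cts Beurling estimate}) and is correct. The two nontrivial topological ingredients you invoke are genuinely needed and both hold: (i) every component of $\mathbb{C}\setminus\Omega$ for simply connected $\Omega\subsetneq\mathbb{C}$ is unbounded --- this follows from the \v Sura-Bura lemma applied to the continuum $\widehat{\mathbb{C}}\setminus\Omega$, since a bounded (hence compact) component would yield a proper nonempty clopen subset not containing $\infty$; and (ii) the boundary-bumping theorem applied to the continuum $K_0\cup\{\infty\}$ and the closed proper subset $K_0\cap\overline{B(w_0,R)}$ does indeed force the component $K$ through $w_0$ to meet $\partial B(w_0,R)$. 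It would strengthen the write-up to cite these two facts explicitly rather than compressing them to ``so'' and ``a standard boundary bumping argument.'' The only genuine imprecision is the inclusion $\{|B_{T_{\partial\Omega}}-z|\geq r\}\subseteq\{T_{\partial B(z,r)}< T_{\partial\Omega}\}$: with $\geq r$ on the left you only get $T_{\partial B(z,r)}\leq T_{\partial\Omega}$, since the exit point may lie exactly on $\partial B(z,r)$. This is easily repaired --- either pass to $\{|B_{T_{\partial\Omega}}-z|> r'\}$ for $r'\uparrow r$ and use monotone convergence, or apply Theorem \ref{thm: cts Beurling estimate} with a slightly smaller radius $R'<R$ (noting $K\cap\partial B(w_0,R')\neq\emptyset$ by connectedness of $K$) --- but as written the step has a small gap. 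The remaining reductions (triangle inequality giving $B(w_0,R)\subseteq B(z,r)$, and $T_{\partial\Omega}\leq T_K$ because $K\subseteq\mathbb{C}\setminus\Omega$) are all correct, and the constant-tracking at the end is fine.
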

\noindent It is not hard to show that the statement of Proposition \ref{prop: cts Beurling estimate} is equivalent to Theorem \ref{thm: cts Beurling estimate}. Writing the strong Beurling estimate in this way will make it clearer how it is we are applying this result, when we use it to prove Theorem \ref{thm: polynomial rate of convergence for Dirichlet problem on OD maps}.\\ \\ Insofar as orthodiagonal maps are good approximations of continuous 2D space, an analogous estimate should be true for simple random walks on orthodiagonal maps. Indeed, in \cite{K87}, Kesten proves an analogue of the strong Beurling estimate for simple random walks on the square grid. For a nice exposition of this result, see Section 2.5 of \cite{IRWbook}. Later, in \cite{LL04}, Lawler and Limic prove an analogue of the strong Beurling estimate for a large class of random walks on periodic lattices. Establishing a strong Beurling estimate for general orthodiagonal maps, or even for the more restricted setting of isoradial graphs, is currently an open problem. However, in this work, we will show that simple random walks on orthodiagonal maps satisfy a weak Beurling estimate:
\begin{lem}
\label{lem: weak Beurling on OD maps}
(Weak Beurling Estimate) There exist absolute constants $C,K,\beta>0$ such that if $G=(V^{\bullet}\sqcup{V^{\circ}},E)$ is an orthodiagonal map with edges of length at most $\varepsilon$, $u\in{\text{Int}(V^{\bullet})}$, $r>0$ is a real number, $(S_{n})_{n\geq{0}}$ is a simple random walk on $G^{\bullet}$, and $\tau_{\partial{V^{\bullet}}}$ is the hitting time of $\partial{V^{\bullet}}$ by this random walk, then:
\begin{equation*}
    \mathbb{P}^{u}(|S_{\tau_{\partial{V^{\bullet}}}}-u|\geq{r})\leq{C\Big(\frac{d_{u}\vee{K\varepsilon}}{r}\Big)^{\beta}}
\end{equation*}
where $d_{u}=\text{dist}(u,\partial{V^{\bullet}})$.
\end{lem}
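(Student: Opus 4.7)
The plan is to prove the lemma by iterating a uniform one-step escape estimate across dyadic scales. Set $R_{0} := d_{u} \vee K\varepsilon$ for an absolute constant $K > 0$ to be chosen below, and $R_{k} := 2^{k} R_{0}$. The key intermediate step is to establish the following escape bound: there exist absolute constants $K > 0$ and $\delta \in (0,1)$ such that, for every $R \geq d_{u} \vee K\varepsilon$ and every $w \in V^{\bullet}$ with $|w - u| \leq R + \varepsilon$,
\begin{equation*}
    \mathbb{P}^{w}\bigl(\sigma_{2R} < \tau_{\partial V^{\bullet}}\bigr) \;\leq\; 1 - \delta,
\end{equation*}
where $\sigma_{2R}$ denotes the first time the walk reaches a vertex of $V^{\bullet}$ outside $B(u, 2R)$. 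Granted this estimate, the strong Markov property applied at the successive hitting times $\sigma_{R_{k}}$ gives, for any $r \geq 2R_{0}$,
\begin{equation*}
    \mathbb{P}^{u}\bigl(|S_{\tau_{\partial V^{\bullet}}} - u| \geq r\bigr) \;\leq\; (1-\delta)^{\lfloor \log_{2}(r/R_{0}) \rfloor} \;\leq\; C\,(R_{0}/r)^{\beta},
\end{equation*}
with $\beta := \log_{2}(1/(1-\delta)) > 0$, and the lemma follows (the range $r < 2 R_{0}$ is trivial after adjusting $C$).

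The main obstacle is proving the one-step escape estimate. Its geometric content is that the walk started at $w$ has a uniformly positive probability of being absorbed at a nearby boundary vertex before reaching the far boundary of $B(u, 2R)$. Concretely, let $v_{0} \in \partial V^{\bullet}$ be a boundary vertex nearest $u$, so $|v_{0} - u| \leq d_{u} + \varepsilon$ and $|v_{0} - w| \leq 2R + 2\varepsilon$. I would test against the log-potential $L(z) := \log\bigl(|z - v_{0}| \vee K\varepsilon\bigr)$, which is continuous-harmonic on $\mathbb{R}^{2} \setminus B(v_{0}, K\varepsilon)$ and, by the ``almost-harmonic'' principle sketched in the introduction (mollification, or a direct Taylor expansion that exploits the orthogonality of primal and dual diagonals to cancel the first-order Laplacian error), satisfies $|\Delta^{\bullet} L(v)| = O(\varepsilon^{2}/|v - v_{0}|^{2})$ at every $v \in V^{\bullet}$ with $|v - v_{0}| \geq K\varepsilon$. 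An approximate optional stopping argument applied to the associated compensated process at $\tau_{\partial V^{\bullet}} \wedge \sigma_{2R}$, exploiting the logarithmic gap between the values of $L$ on $\partial B(v_{0}, K\varepsilon) \cap V^{\bullet}$ and on $V^{\bullet} \setminus B(u, 2R)$, then produces a uniform lower bound on the probability of hitting a $K\varepsilon$-neighborhood of $v_{0}$ before $\sigma_{2R}$; combined with the fact that the walk on $G^{\bullet}$ has uniformly positive probability of actually reaching $v_{0}$ from such a neighborhood (by an elementary effective-resistance comparison), this yields the desired $\delta > 0$.

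The hard part will be controlling the accumulated errors from the almost-harmonicity of $L$ uniformly across all orthodiagonal maps: these errors are of order $\varepsilon^{2}/|v-v_{0}|^{2}$ per step and must be summed against the walk's occupation times up to $\tau_{\partial V^{\bullet}} \wedge \sigma_{2R}$, which themselves depend on the geometry of $G$. I expect to handle this by choosing $K$ large enough that the expected total error is dominated by a definite fraction of the logarithmic gap. As an alternative route that sidesteps these delicate error estimates, one could try to extract $\delta > 0$ by applying the H\"older regularity estimates for discrete harmonic functions on orthodiagonal maps proven in \cite{GJN20} and \cite{BG24} to the harmonic function $w \mapsto \mathbb{P}^{w}(\sigma_{2R} < \tau_{\partial V^{\bullet}})$ in a neighborhood of $v_{0}$, where it vanishes on the boundary.
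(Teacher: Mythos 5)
Your outer iteration across geometric scales via the strong Markov property is the right skeleton, and matches the paper's structure. The genuine gap is in the one-step escape estimate, and specifically in the log-potential argument you propose for it. The issue is that the target set you are trying to hit, the ball $B(v_{0}, K\varepsilon)$, is \emph{microscopic}, while the starting point $w$ is allowed to be at distance of order $R \gg \varepsilon$ from $v_{0}$. Even in the continuum (where $\log|z-v_{0}|$ is exactly harmonic), the probability that Brownian motion started at distance $\asymp R$ from $v_{0}$ hits $\partial B(v_{0}, K\varepsilon)$ before exiting $B(u,2R)$ is $\asymp 1/\log(R/\varepsilon)$, which tends to $0$ as $R/\varepsilon \to \infty$. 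No uniform $\delta > 0$ can come out of that computation, and the almost-harmonicity errors only make things worse. The stopping time $\tau_{\partial V^{\bullet}} \wedge \sigma_{2R}$ also does not combine cleanly with $L$, since on the event that the walk is absorbed at a boundary vertex far from $v_{0}$ but still inside $B(u,2R)$, the value $L(S_{T})$ is uncontrolled — it could be anywhere between $\log(K\varepsilon)$ and $\log(3R)$ — so the optional stopping inequality you want does not close.

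What this misses is the topological input that makes Beurling work: $\partial V^{\bullet}$ is not a point near $v_{0}$ but a \emph{connected curve through} $v_{0}$, and (unless $\widehat{G} \subseteq B(u,2R)$, in which case the estimate is trivial) this curve must cross the entire annulus between $|z-u| \asymp d_{u}$ and $|z-u| = 2R$. Hitting a curve that crosses an annulus is a codimension-one event with uniformly positive probability, unlike hitting a $K\varepsilon$-ball. The paper exploits exactly this: it first proves a ``Property (S)'' estimate (Lemma~\ref{lem: property (S)}) — a random walk started at the center of a disk exits through any prescribed arc of angular measure $\eta$ with probability $\geq c$, proved by comparison with the continuous Dirichlet problem via Theorem~1.1 of \cite{GJN20} — and then a crossing-of-annuli lemma (Lemma~\ref{lem: crossings of annuli}): by chaining Property (S) steps, the walk winds around $v_{0}$ and is forced to intersect any nearest-neighbor path crossing the annulus, with probability $\geq \rho$. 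The weak Beurling estimate then follows by iterating this over the $\tau$-adic annuli centered at $v_{0}$, exactly as in your outer iteration. Your ``alternative route'' via H\"older regularity for $w \mapsto \mathbb{P}^{w}(\sigma_{2R} < \tau_{\partial V^{\bullet}})$ has the opposite problem — it is circular in this framework, since Lemma~\ref{lem: discrete harmonic functions are beta Holder in the bulk} is itself deduced from the weak Beurling estimate, and Theorem~1.1 of \cite{GJN20} only yields a $\log^{-1/2}$ rate, not a polynomial modulus of continuity. To repair your proof you would need to replace the log-potential step by an argument that targets the boundary \emph{curve} (e.g.\ an analogue of Lemma~\ref{lem: crossings of annuli}), not a microscopic neighborhood of a single boundary vertex.
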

\noindent Just as in the continuous setting, this weak Beurling estimate gives us estimates for the modulus of continuity of solutions to the Dirichlet problem on orthodiagonal maps. The word ``weak" here refers to the fact that the exponent $\beta$ in this estimate is not sharp. As a consequence, we have the following weak Harnack-type estimate: 
\begin{lem}
\label{lem: discrete harmonic functions are beta Holder in the bulk}
There exist absolute constants $C',K,\beta>0$ so that if $G=(V^{\bullet}\sqcup{V^{\circ}},E)$ is an orthodiagonal map with edges of length at most $\varepsilon$, $h:V^{\bullet}\rightarrow{\mathbb{R}}$ is a harmonic function on $G^{\bullet}$ and $d=d_{x}\wedge{d_{y}}=\text{dist}(x,\partial{V^{\bullet}})\wedge{\text{dist}(y,\partial{V^{\bullet}})}$ for some vertices $x,y\in{V^{\bullet}}$, then:
\begin{equation*}
    |h(y)-h(x)|\leq{C'\hspace{1pt}\|h\|_{\infty}\Big(\frac{|y-x|\vee{K\varepsilon}}{d}\Big)^{\beta}}    
\end{equation*}
\end{lem}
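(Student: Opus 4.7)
The plan is to derive the bulk Hölder estimate from Lemma~\ref{lem: weak Beurling on OD maps} via a standard oscillation decay argument on dyadic scales. Set $r := |y-x| \vee K\varepsilon$. For the trivial regime $r \geq d/C_0$ (with $C_0$ a sufficiently large absolute constant), the bound $|h(y)-h(x)| \leq 2\|h\|_\infty$ combined with $(r/d)^\beta \geq C_0^{-\beta}$ already gives the conclusion after absorbing $2C_0^\beta$ into $C'$. So I reduce to $r < d/C_0$.

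The central step is the following oscillation decay lemma: there exist absolute constants $\theta, a \in (0,1)$ and $K_0 \geq K$ such that for any orthodiagonal map $G$ with edges of length at most $\varepsilon$, any harmonic $h$ on $G^\bullet$, any $v \in V^\bullet$, and any $R \in [K_0\varepsilon,\, d_v]$,
\begin{equation*}
    \mathrm{osc}_{V^\bullet \cap B(v,\, aR)}\, h \;\leq\; \theta \cdot \mathrm{osc}_{V^\bullet \cap B(v,\, R)}\, h.
\end{equation*}
To prove this I normalize $h$ so that its range on $V^\bullet \cap B(v, R)$ is exactly $[0,1]$. A discrete planarity/duality argument then shows that at least one of the sub-level sets $\{h \leq 1/2\}$ or $\{h > 1/2\}$, intersected with $V^\bullet \cap (B(v,R) \setminus B(v, aR))$, contains a connected barrier $\Gamma$ topologically separating $B(v, aR)$ from the outer boundary of the sub-orthodiagonal map $G_R$ obtained by restricting $G$ to its faces inside $B(v,R)$. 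Say $\Gamma \subseteq \{h \leq 1/2\}$. Applying Lemma~\ref{lem: weak Beurling on OD maps} to $G_R$ yields that, for every $u \in V^\bullet \cap B(v, aR)$, the simple random walk from $u$ on $G^\bullet$ hits $\Gamma$ before leaving $B(v,R)$ with probability bounded below by an absolute constant $c > 0$. Optional stopping applied to the bounded martingale $(h(S_n))$ then gives $h(u) \leq c\cdot \tfrac{1}{2} + (1-c)\cdot 1 = 1 - c/2$, and one takes $\theta := 1 - c/2$.

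I then iterate the oscillation decay lemma along dyadic scales $R_k = a^k d$ for $k = 0, 1, \ldots, N$, with $N \sim \log(d/r)/\log(1/a)$ chosen so that $R_N$ is comparable to $r$. Since $|y-x| \leq r \leq R_N$, both $x$ and $y$ lie in $B(x, R_N)$, hence
\begin{equation*}
    |h(y) - h(x)| \;\leq\; \mathrm{osc}_{V^\bullet \cap B(x, R_N)}\, h \;\leq\; \theta^N \cdot \mathrm{osc}_{V^\bullet \cap B(x, R_0)}\, h \;\leq\; 2\theta^N \|h\|_\infty \;\leq\; C' \|h\|_\infty \left(\frac{r}{d}\right)^{\beta},
\end{equation*}
with $\beta := \log(1/\theta)/\log(1/a)$, which is the claimed bound. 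The main obstacle is the proof of the oscillation decay lemma, with two delicate ingredients. First, the discrete planarity argument producing the barrier $\Gamma$ of uniform sign must be robust enough that the random walk, which can jump over a set of a single vertex's thickness, is still forced to hit $\Gamma$ with positive probability; this likely requires defining the barrier via a face-based (dual) separator or a small thickening. Second, in order to apply Lemma~\ref{lem: weak Beurling on OD maps} to $G_R$ (or to a modified hitting problem with $\Gamma$ as part of the boundary), one must verify that restricting $G$ to the faces inside $B(v, R)$, possibly after small modifications near $\partial B(v, R)$, yields a bona fide orthodiagonal map with a simple outer boundary and edges bounded by $\varepsilon$.
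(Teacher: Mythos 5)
Your oscillation decay approach is a genuinely different route from the paper's single-step argument, but the proof of your decay lemma has a real gap. You assert that, after normalization, at least one of the level sets $\{h\leq 1/2\}$ or $\{h>1/2\}$ intersected with the annulus $B(v,R)\setminus B(v,aR)$ must contain a connected set separating $B(v,aR)$ from the outer boundary. This is false. Planar duality in an annulus says that one color contains a separating circuit precisely when the other color does \emph{not} contain a radial crossing of the annulus; it is entirely possible for both colors to contain radial crossings, and hence for \emph{neither} to contain a separating circuit (for instance, split the annulus into four sectors with alternating colors). In that configuration there is no barrier of uniform sign, so the random walk from a generic point of $B(v,aR)$ is not forced to hit either level set with probability bounded below, and your application of optional stopping has nothing to grab onto. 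The thickening or dual-separator device you flag as the delicate point addresses the wrong concern: the issue is not that the walk might slip past a one-vertex-thick barrier, it is that the barrier need not exist at all.

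The paper sidesteps the dichotomy by using the maximum principle to produce a \emph{path}, not a circuit, anchored at one of the two points. Assuming $h(y)\geq h(x)$, harmonicity gives a nearest-neighbor path $\gamma=(w_0,\dots,w_m)$ with $w_0=y$, $w_m\in\partial V^{\bullet}$, and $h(w_i)\geq h(y)$ along the path. Because $y$ itself lies on $\gamma$, the other point $x$ is automatically within distance $|x-y|$ of $\gamma$; applying Lemma~\ref{lem: weak Beurling on OD maps} with $\gamma$ treated as part of the boundary shows the walk from $x$ hits $\gamma$ before leaving $B(y,d)$ with probability at least $1-C\big((|x-y|\vee K\varepsilon)/d\big)^{\beta}$, and optional stopping gives the H\"older bound directly, with no iteration and no topological case analysis. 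You could rescue your decay lemma by swapping the circuit for such a max-principle path anchored at the extremal point of $B(v,aR)$ and observing that every $u\in B(v,aR)$ lies within $2aR$ of the anchor, but at that point the iteration is superfluous and the paper's one-shot argument is strictly simpler. Your reduction to the regime $r<d/C_0$ and the final dyadic bookkeeping are both fine.
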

\noindent The standard Harnack estimate for continuous harmonic functions can be interpreted as telling us that bounded harmonic functions are Lipschitz in the bulk (away from the boundary of our domain). Hence, our weak Harnack estimate in Lemma \ref{lem: discrete harmonic functions are beta Holder in the bulk} can be interpreted as saying that discrete harmonic functions on orthodiagonal maps are $\beta$-H\"older in the bulk.\\ \\
To our knowledge, Lemma \ref{lem: weak Beurling on OD maps} is not stated explicitly in this level of generality anywhere in the literature. However, it follows immediately from Lemma 6.7 of \cite{CLR23}. In the same paper, Chelkak, Laslier and Russkikh prove Lemma \ref{lem: discrete harmonic functions are beta Holder in the bulk} in far greater generality. Namely, Proposition 6.13 of \cite{CLR23} says that bounded discrete harmonic and discrete holomorphic functions on $t$- embeddings whose corresponding origami map is $\kappa$- Lipschitz on large scales for some $\kappa\in{(0,1)}$, are $\beta$- H\"older in the bulk for some absolute constant $\beta=\beta(\kappa)\in{(0,1)}$. The condition on the origami map here is known as ``$Lip(\kappa,\delta)$" where $\delta>0$ is the scale on which our origami map is $\kappa$- Lipschitz. The t- embeddings of \cite{CLR23} are a strict generalization of the setting we are working in. Namely, every orthodiagonal map is a t-embedding (for details, see Section 8.1 of \cite{CLR23}). Furthermore, it is not difficult to show that for any $\kappa\in{(0,1)}$, there exists an absolute constant $c=c(\kappa)>1$ so that if $G$ is an orthodiagonal map with edges of length at most $\varepsilon$, then $G$ satisfies the assumption ``$Lip(\kappa, c\varepsilon)$" of \cite{CLR23} (see Assumption 1.1 of \cite{CLR23}). Hence, all of the results of \cite{CLR23} that are proven for t-embeddings satisfying the assumption $Lip(\kappa,\delta)$ for some $\kappa\in{(0,1)}$ and $\delta>0$ transfer over immediately to the orthodiagonal setting. \\ \\
For the convenience of the reader, we provide self-contained proofs of Lemmas \ref{lem: weak Beurling on OD maps} and \ref{lem: discrete harmonic functions are beta Holder in the bulk} in Appendix \ref{sec: Random Walks and A Priori Regularity Estimates for Harmonic Functions on Orthodiagonal Maps}. Our arguments are identical to those in \cite{CLR23} and its predecessor \cite{Ch16}, with the notable exception that we use Theorem 1.1 of \cite{GJN20} to prove the analogue of Lemma 6.7 of \cite{CLR23} in the orthodiagonal setting. 

\subsection{Proof Outline: $z$ Away from the Boundary}
\label{subsec: proof outline away from the boundary}

Away from the boundary, we use the same strategy used by Chelkak to prove Theorem 4.1 of \cite{Ch19}. This theorem establishes a polynomial rate of convergence, mesoscopically far away from the boundary,  for certain observables on s-embeddings satisfying the regularity conditions ``Unif($\delta$)" and ``Flat($\delta$)" (see Section 1.3 of \cite{Ch19} for details). To see this same argument, written out in the simpler setting of isoradial graphs, see Proposition 4.4.14 of \cite{BR21}. The idea is that if a function $f$ is almost harmonic in the sense that $\Delta{f}\approx{0}$, then $f$ is close to the harmonic function with the same boundary data. More precisely, suppose $\Omega\subseteq{\mathbb{R}^{2}}$ is a bounded, simply connected domain, $g\in{C^{0}(\mathbb{R}^{2})}$, and $h$ is the solution to the Dirichlet problem on $\Omega$ with boundary data given by $g$. That is: 
\begin{align*}
    \Delta{h}(x)&=0 \hspace{18pt}\text{for all $x\in{\Omega}$} \\ 
    h(x)&=g(x) \hspace{5pt}\text{for all $x\in{\partial{\Omega}}$}
\end{align*}
If $f$ is any other function in $C^{2}_{b}(\Omega)\cap{C(\overline{\Omega})}$ with the same boundary data,
\begin{equation}
\label{eqn: small Laplacian means almost harmonic}
|f(x)-h(x)|=\big|\int_{\Omega}\Delta{f}(y)G_{\Omega}(y,x)dA(y)\big|
\end{equation}
for any $x\in{\Omega}$, where $G_{\Omega}(\cdot,x)$ is the Green function on $\Omega$ centered at $x$, and ``$dA(y)$" is integration with respect to area on $\Omega$. As an immediate consequence of this formula, if $f$ has the same boundary behavior as $h$ and $\Delta{f}$ is small, $f$ must be close to $h$. \\ \\
To apply the formula in Equation \ref{eqn: small Laplacian means almost harmonic} to the problem of estimating the difference between $h(z)$ and $h^{\bullet}(z)$, we need to replace $h^{\bullet}$ with a smooth function. To this effect:  
\begin{enumerate}
    \item We convolve $h^{\bullet}$ with a smooth mollifier $\phi_{\delta}$, supported on a ball of radius $\delta$ about $0$, where $\delta$ is small.
    \item As long as $\delta\ll{d_{z}}$, Lemma \ref{lem: discrete harmonic functions are beta Holder in the bulk} tells us that $h^{\bullet}(z)$ is close to $\phi_{\delta}\ast{h^{\bullet}}(z)$.
    \item Since $h^{\bullet}$ is discrete harmonic, the convolution $\phi_{\delta}\ast{h^{\bullet}}$ is almost harmonic in the sense that $\Delta(\phi_{\delta}\ast{h^{\bullet}})\approx{0}$. Verifying this is the most subtle part of the whole argument. This is the subject of Section \ref{sec: The Convolution of a Discrete Harmonic Function with a Smooth Mollifier is Almost Harmonic}.
    \item  From here, we are in a position to apply our formula in Equation \ref{eqn: small Laplacian means almost harmonic} to show that the solution to our continuous Dirichlet problem, $h$, is close to $\phi_{\delta}\ast{h^{\bullet}}$ and therefore $h^{\bullet}$.
\end{enumerate}   
There is a minor technicality that $\phi_{\delta}\ast{h^{\bullet}}$ is not defined on all of $\Omega$, so this argument actually plays out on some smaller subdomain of $\Omega$. Furthermore, the boundary behavior of $\phi\ast{h^{\bullet}}$ doesn't quite agree with the boundary behavior of $h$, but morally, this is what's going on.

\section{The Convolution of a Discrete Harmonic Function with a Smooth Mollifier is Almost Harmonic}
\label{sec: The Convolution of a Discrete Harmonic Function with a Smooth Mollifier is Almost Harmonic}

In the section, we will show that the convolution of a discrete harmonic function with a smooth mollifier has small Laplacian. This is the key estimate that will allow us to compare our mollified discrete harmonic function to the corresponding continuous harmonic function. We will also use this result in Section \ref{sec: Lipschitz Regularity on a Mesoscopic Scale for Harmonic Functions on Orthodiagonal Maps} to show that discrete harmonic functions are Lipschitz in the bulk on a mesoscopic scale. To prove this, we'll need the following analogue of Proposition 3.12 of \cite{Ch22}: 

\begin{prop}
\label{prop: continuous Laplacian is average of discrete Laplacian}
    Suppose $S\subseteq{\mathbb{R}^{2}}$ is a square with side length $l$, $f\in{C_{b}^{3}(S)}$, and $G=(V^{\bullet}\sqcup{V^{\circ}}, E)$ is an orthodiagonal map with edges of length at most $\varepsilon$ such that $S\subseteq{\widehat{G}}$ and $l\gtrsim{\varepsilon}$. Then: 
    $$
    \sum_{v\in{V^{\bullet}\cap{S}}}\Delta^{\bullet}f(v)=\int_{S}\Delta{f}(x)dA(x)+O(\varepsilon\cdot{l}\cdot\|D^{2}f\|)+O(l^{3}\cdot\|D^{3}f\|)
    $$
\end{prop}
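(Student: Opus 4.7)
The plan is to associate to each primal vertex $v$ its \emph{star polygon} $P_v$, whose boundary is the concatenation of the dual diagonals $e^{\circ}_Q$ for all faces $Q$ containing $v$ as a primal corner. These star polygons tile $\widehat{G}$: each face $Q=[v,r,w,s]$ is cut by its dual diagonal $\{r,s\}$ into two triangles, one lying in $P_v$ and the other in $P_w$. I would first show that $\Delta^{\bullet}f(v)\approx\int_{P_v}\Delta f\,dA$ on a per-vertex basis, then sum over $v\in V^{\bullet}\cap S$ and replace $\bigcup_{v}P_v$ by $S$ up to a boundary layer.

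For the per-face comparison, fix a face $Q=[v,r,w,s]$ and let $m_Q$ denote the midpoint of the primal diagonal $\{v,w\}$. Taylor-expanding $f$ around $m_Q$ and using $w-m_Q=-(v-m_Q)$ to cancel the quadratic terms gives $f(w)-f(v)=\nabla f(m_Q)\cdot(w-v)+O(|e^{\bullet}_Q|^{3}\|D^{3}f\|)$. Multiplying by $c(v,w)=|e^{\circ}_Q|/|e^{\bullet}_Q|$ and writing $\hat n_Q:=(w-v)/|e^{\bullet}_Q|$, which by orthodiagonality is the unit normal to $e^{\circ}_Q$ pointing out of $P_v$, yields
\begin{equation*}
c(v,w)\bigl(f(w)-f(v)\bigr)=|e^{\circ}_Q|\,\nabla f(m_Q)\cdot\hat n_Q+O\bigl(|e^{\circ}_Q||e^{\bullet}_Q|^{2}\|D^{3}f\|\bigr).
\end{equation*}
The main term is essentially the midpoint-rule approximation of the flux $\int_{e^{\circ}_Q}\nabla f\cdot\hat n_Q\,d\ell$; the only discrepancy is that $\nabla f$ is evaluated at the primal midpoint rather than the dual midpoint, and since both lie in $Q$ (diameter at most $2\varepsilon$) this introduces an additional $O(|e^{\circ}_Q|\varepsilon\|D^{2}f\|)$ error. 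Summing over the faces containing $v$ and invoking the divergence theorem then gives $\Delta^{\bullet}f(v)=\int_{P_v}\Delta f\,dA+\mathcal{E}(v)$, where $\mathcal{E}(v)$ collects the per-face errors.

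Summing over $v\in V^{\bullet}\cap S$, the crucial observation is that for any face $Q$ with both primal vertices in $S$, the contributions to $\mathcal{E}(v_Q)$ and $\mathcal{E}(w_Q)$ are exact negatives: swapping $v$ and $w$ flips the signs of $\hat n_Q$, of $f(w)-f(v)$, and of the flux $\int_{e^{\circ}_Q}\nabla f\cdot\hat n_Q\,d\ell$ simultaneously. Hence only the $O(l/\varepsilon)$ \emph{boundary faces} (those with exactly one primal vertex in $S$) contribute to $\sum_v\mathcal{E}(v)$, each contributing $O(\varepsilon^{2}\|D^{2}f\|)+O(\varepsilon^{3}\|D^{3}f\|)$, so $\sum_v\mathcal{E}(v)=O(\varepsilon l\|D^{2}f\|)+O(\varepsilon^{2}l\|D^{3}f\|)$. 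Finally, $\sum_{v\in V^{\bullet}\cap S}\int_{P_v}\Delta f\,dA=\int_{U}\Delta f\,dA$ with $U:=\bigcup_{v\in V^{\bullet}\cap S}P_v$; since each $P_v$ has diameter at most $2\varepsilon$, the symmetric difference $U\triangle S$ sits in an $O(\varepsilon)$-neighborhood of $\partial S$ of area $O(\varepsilon l)$, yielding another $O(\varepsilon l\|D^{2}f\|)$ error. The most delicate step will be this cancellation: without it, naively summing $\mathcal{E}(v)=O(\varepsilon^{2}\|D^{2}f\|)$ over $O(l^{2}/\varepsilon^{2})$ interior vertices would blow up to an unacceptable $O(l^{2}\|D^{2}f\|)$, so the per-face estimates must be set up symmetrically enough that the $v_Q$ and $w_Q$ contributions literally cancel. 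Once this is in place, the hypothesis $l\gtrsim\varepsilon$ absorbs $\varepsilon^{2}l\|D^{3}f\|$ into the claimed $l^{3}\|D^{3}f\|$ term.
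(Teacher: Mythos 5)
Your proposal takes a genuinely different route from the paper's. The paper Taylor-expands $f$ to second order, treats $D^{2}f$ as roughly constant on $S$ (paying $O(l^{3}\|D^{3}f\|)$), and then applies discrete summation by parts to convert the three quadratic sums $\sum c(u,v)(u_{i}-v_{i})(u_{j}-v_{j})$ into discretized contour integrals of $x\,dy$, $x\,dx$, $y\,dx$ along the dual boundary of a suborthodiagonal region, which Green's theorem then evaluates. You instead make a finite-volume flux comparison face by face and exploit an exact sign cancellation between the two halves of every interior face. The cancellation you identify is correct, and if the boundary term were controlled it would even yield a sharper $O(\varepsilon^{2}l\|D^{3}f\|)$ in place of $O(l^{3}\|D^{3}f\|)$. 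Your flux picture is close in spirit to Chelkak--Smirnov's isoradial estimate (Lemma 2.2 of \cite{CS11}), where $m_{Q}=m_{Q}'$ and the $\|D^{2}f\|$ error you encounter vanishes identically.

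The gap is at the boundary. You assert that there are $O(l/\varepsilon)$ boundary faces, each contributing $O(\varepsilon^{2}\|D^{2}f\|)$, but no such count holds for a general orthodiagonal map, and the per-face error cannot be controlled by an area argument either: the error from a boundary face is $O(|e^{\circ}_{Q}|\,|m_{Q}-m_{Q}'|\,\|D^{2}f\|)$, and for a kite with $|e^{\bullet}_{Q}|=\delta\ll\varepsilon$, $|e^{\circ}_{Q}|\asymp\varepsilon$, and an off-center crossing point, one has $|m_{Q}-m_{Q}'|\asymp\varepsilon$ even though $\text{Area}(Q)\asymp\varepsilon\delta$. With the short primal diagonal nearly parallel to $\partial S$, roughly $l/\delta$ such faces can straddle $\partial S$, and the boundary error then scales like $\varepsilon^{2}l\,\delta^{-1}\,\|D^{2}f\|$, unbounded as $\delta\to 0$. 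What you actually need is the dual-lattice perimeter bound $\sum_{Q\ \text{bdry}}|e^{\circ}_{Q}|\lesssim l$. The paper secures precisely this by invoking Lemma 4.3 of \cite{BP24}: it first replaces $V^{\bullet}\cap S$ by $\text{Int}(V^{\bullet}_{S'})$ for a suborthodiagonal region $S'$ approximating $S$ whose dual boundary contour has length $\asymp l$ (a replacement that costs only $O(\varepsilon l\|D^{2}f\|)+O(\varepsilon l^{2}\|D^{3}f\|)$, by an area bound), and only then runs its contour computation over $\partial^{\circlearrowright}E^{\circ}_{S'}$. Your argument would go through with the same preliminary replacement; without it, it does not. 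One further caveat: the paper allows non-convex faces whose diagonals need not intersect, for which the star polygon $P_{v}$ may fail to contain $v$ and $\hat n_{Q}$ may fail to be an outward normal, so the per-vertex divergence-theorem identity would need to be stated as a signed identity or restricted to the convex case.
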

\noindent For context, notice that if our orthodiagonal map $G=(V^{\bullet}\sqcup{V^{\circ}},E)$ is isoradial, with edges of length at most $\varepsilon$ and $f\in{C^{3}_{b}(\widehat{G})}$, a straightforward computation (see Lemma 2.2 of \cite{CS11}) tells us that for any vertex $x\in{\text{Int}(V^{\bullet})}$, we have that:
\begin{align}
    \Delta^{\bullet}f(x)&=\Delta{f}(x)\cdot\text{Area}(N_{x})+O(\varepsilon\cdot\text{Area}(N_{x})\cdot\|D^{3}f\|) \\ &=\Delta{f}(x)\cdot\text{Area}(N_{x})+O(\varepsilon^{3}\cdot\|D^{3}f\|) \label{eqn: discrete and cts Laplacian agree for isoradial graphs}
\end{align}
where:
\begin{equation*}
    \text{Area}(N_{x})=\frac{1}{2}\sum_{\substack{y\in{V^{\bullet}} \\ y\sim{x}}}\text{Area}(Q_{\{x,y\}})
\end{equation*}
Recall that for any pair of neighboring vertices $x,y\in{V^{\bullet}}$, $Q_{\{x,y\}}$ is the face of $G$ with primal diagonal $\{x,y\}$. The factor of $\frac{1}{2}$ here, comes from the fact that every inner face of $G$ has two primal vertices, so it is natural that the area of this face should be split evenly between them. Equation \ref{eqn: discrete and cts Laplacian agree for isoradial graphs} tells us that on isoradial graphs, at any vertex, the discrete Laplacian of a smooth function looks like the continuous Laplacian. In particular, any continuous harmonic function defined in a neighbourhood of $\widehat{G}$ is ``almost" discrete harmonic in that its discrete Laplacian is small. \\ \\
Repeating this computation for a general orthodiagonal map, one finds that this result is no longer true: at any fixed vertex, the discrete Laplacian does not look like the continuous Laplacian. However, Proposition \ref{prop: continuous Laplacian is average of discrete Laplacian} tells us that, at least on average, the discrete Laplacian does indeed look like the continuous Laplacian.

\begin{proof}
    By Lemma 4.3 of \cite{BP24}, we can pick a suborthodiagonal map $S'$ of G so that: 
    \begin{itemize}
        \item $d_{\text{Haus}}(\partial{\widehat{S'}},\partial{S})=O(\varepsilon)$.
        \item If $x_{1}, x_{2}, ..., x_{m}$ are the vertices of $\partial{V_{S'}^{\circ}}$ listed in counterclockwise order, then these vertices form a contour. That is, $\{x_{i}, x_{i+1}\}$ is an edge of $(S')^{\circ}$ for all $i$, where the indices $i$ are being considered modulo $m$. It follows that if $x\in{\partial{V_{S'}^{\bullet}}}$, then $x$ has exactly one neighbouring vertex $y$ in $(S')^{\bullet}$, where $y\in{\text{Int}(V_{S'}^{\bullet})}$. 
        \item For $x_{1}, x_{2}, ..., x_{m}$ as above, $\sum\limits_{i=1}^{m}|x_{i+1}-x_{i}|\asymp{l}$.
    \end{itemize}
    In other words, $S'$ is an orthodiagonal approximation to $S$ that is close in Hausdorff distance and whose perimeter (at least in the dual lattice) is comparable to that of $S$. We will see why this is important later. Then: 
    \begin{align*}
    \sum_{v\in{V^{\bullet}\cap{S}}}\Delta^{\bullet}f(v)&=\sum_{v\in{V^{\bullet}\cap{S}}}\sum_{u:u\sim{v}}c(u,v)\big(f(u)-f(v))\big) \\ &=\sum_{v\in{V^{\bullet}\cap{S}}}\sum_{u:u\sim{v}}c(u,v)\big(\nabla{f}(v)(u-v)+\frac{1}{2}(u-v)^{T}D^{2}f(v)(u-v)+O(\|D^{3}{f}\|\cdot|u-v|^{3})\big) 
    \end{align*}
    Since $\nabla{f}(v)\cdot{u}$ is a linear function of $u$ for fixed $v$ and linear functions are discrete harmonic, 
$$
\sum_{u:u\sim{v}}c(u,v)\nabla{f}(v)\cdot(u-v)=0
$$
for all $v\in{\text{Int}(V^{\bullet})}$. Since our quadrilaterals all have orthogonal diagonals, given a quadrilateral $Q=[u,r,v,s]$ in $G$, we have that:
$$
\text{Area}([u,r,v,s])=\frac{1}{2}|u-v||r-s|=\frac{1}{2}c(u,v)|u-v|^{2}
$$
With this in mind, 
\begin{align*}
    &\sum_{v\in{V^{\bullet}\cap{S}}}\sum_{u:u\sim{v}}c(u,v)O(\|D^{3}{f}\|\cdot|u-v|^{3})=\sum_{v\in{V^{\bullet}\cap{S}}}\sum_{u:u\sim{v}}O(\|D^{3}{f}\|\cdot\text{Area}(Q_{u,v})\cdot|u-v|) \\ 
    =&O(\|D^{3}f\|\cdot{\varepsilon}\sum_{v\in{V^{\bullet}\cap{S}}}\sum_{u:u\sim{v}}\text{Area}(Q_{u,v}))\stackrel{(*)}{=}O(\|D^{3}f\|\cdot{\varepsilon}\cdot\text{Area}(S))=O(\|D^{3}f\|\cdot{\varepsilon}\cdot{l^{2}}) 
\end{align*}
The equality, $(*)$, follows from the fact that any quadrilateral of $G$ that lies within $\varepsilon$ of $S$ has at most two corresponding primal vertices in $V^{\bullet}\cap{S}$ and so is counted at most twice in our sum. Thus:
$$
\sum_{v\in{V^{\bullet}\cap{S}}}\Delta^{\bullet}f(v)=\frac{1}{2}\sum_{v\in{V^{\bullet}\cap{S}}}\sum_{u:u\sim{v}}c(u,v)\big((u-v)^{T}D^{2}f(v)(u-v)\big)+O(\|D^{3}f\|\cdot{\varepsilon}\cdot{l^{2}}) 
$$
By the same reasoning,
$$
\sum_{v\in{\text{Int}(V^{\bullet}_{S'})}}\Delta^{\bullet}f(v)=\frac{1}{2}\sum_{v\in{\text{Int}(V^{\bullet}_{S'})}}\sum_{u:u\sim{v}}c(u,v)\big((u-v)^{T}D^{2}f(v)(u-v)\big)+O(\|D^{3}f\|\cdot{\varepsilon}\cdot{l^{2}}) 
$$
Hence: 
\begin{align*}
    \big|\sum_{v\in{V^{\bullet}\cap{S}}}\Delta^{\bullet}f(v)-\sum_{v\in{\text{Int}(V^{\bullet}_{S'})}}\Delta^{\bullet}f(v)\big|&=\frac{1}{2}\sum_{\substack{v\in{V^{\bullet}\cap{S}} \\ v\notin{\text{Int}(V^{\bullet}_{S'})}}}\sum_{u:u\sim{v}}c(u,v)\big((u-v)^{T}D^{2}f(v)(u-v)\big)+O(\|D^{3}f\|\cdot{\varepsilon}\cdot{l^{2}}) \\ 
    &=O\big(\|D^{2}f\|\sum_{\substack{v\in{V^{\bullet}\cap{S}} \\ v\notin{\text{Int}(V^{\bullet}_{S'})}}}\sum_{u:u\sim{v}}c(u,v)|u-v|^{2})\big)+O(\|D^{3}f\|\cdot{\varepsilon}\cdot\text{Area}(S)) \\ 
    &=O\big(\varepsilon\cdot{l}\cdot\|D^{2}f\|\big)+O(\|D^{3}f\|\cdot{\varepsilon}\cdot{l^{2}})
\end{align*}
In other words, we see that we can approximate the sum of $\Delta^{\bullet}f$ over $V^{\bullet}\cap{S}$ by the sum of $\Delta^{\bullet}f$ over $\text{Int}(V^{\bullet}_{S'})$, and the error we incur when we do this is small. In summa, we have that:
$$
\sum_{v\in{V^{\bullet}\cap{S}}}\Delta^{\bullet}f(v)=\frac{1}{2}\sum_{v\in{\text{Int}(V^{\bullet}_{S'})}}\sum_{u:u\sim{v}}c(u,v)\big((u-v)^{T}D^{2}f(v)(u-v)\big)+O\big(\varepsilon\cdot{l}\cdot\|D^{2}f\|\big)+O(\varepsilon\cdot{l^{2}}\cdot\|D^{3}f\|)
$$
For any $v\in{V^{\bullet}\cap{S}}$, Taylor- expanding the second derivatives of $f$ about $z_{S}$, the center of the square $S$, we have that: 
\begin{align*}
    \partial_{1}^{2}f(v)&=\partial_{1}^{2}f(z_{S})+O(l\cdot\|D^{2}f\|) \\ 
    \partial_{1}\partial_{2}f(v)&=\partial_{1}\partial_{2}f(z_{S})+O(l\cdot\|D^{2}f\|) \\
    \partial_{2}^{2}f(v)&=\partial_{2}^{2}f(z_{S})+O(l\cdot\|D^{2}f\|)
\end{align*}
Applying these estimates to the sum above (effectively, we are treating the second derivatives of $f$ as roughly constant on each square) we have that: 
\begin{align*}
    &\sum_{v\in{\text{Int}(V^{\bullet}_{S'})}}\sum_{u:u\sim{v}}c(u,v)\big((u-v)^{T}D^{2}f(v)(u-v)\big)= \\ &=\sum_{v\in{\text{Int}(V^{\bullet}_{S'})}}\sum_{u:u\sim{v}}c(u,v)\big(\partial_{1}^{2}f(v)(u_{1}-v_{1})^{2}+2\partial_{1}\partial_{2}f(v)(u_{1}-v_{1})(u_{2}-v_{2})+\partial_{2}^{2}f(v)(u_{2}-v_{2})^{2}\big)= \\ 
    &=\sum_{v\in{\text{Int}(V^{\bullet}_{S'})}}\partial_{1}^{2}f(v)\sum_{u:u\sim{v}}c(u,v)(u_{1}-v_{1})^{2} + 2\sum_{v\in{\text{Int}(V^{\bullet}_{S'})}}\partial_{1}\partial_{2}f(v)\sum_{u:u\sim{v}}c(u,v)(u_{1}-v_{1})(u_{2}-v_{2}) \\ 
    &\hspace{15pt}+\sum_{v\in{\text{Int}(V^{\bullet}_{S'})}}\partial_{2}^{2}f(v)\sum_{u:u\sim{v}}c(u,v)(u_{2}-v_{2})^{2}= \\
    &=\partial_{1}^{2}f(z_{S})\Big(\sum_{v\in{\text{Int}(V^{\bullet}_{S'})}}\sum_{u:u\sim{v}}c(u,v)(u_{1}-v_{1})^{2}\Big)+O\big(l\cdot{\|D^{3}f\|}\sum_{v\in{\text{Int}(V^{\bullet}_{S'})}}\sum_{u:u\sim{v}}c(u,v)(u_{1}-v_{1})^{2}\big) \\ 
     &\hspace{15pt}+2\partial_{1}\partial_{2}f(z_{S})\Big(\sum_{v\in{\text{Int}(V^{\bullet}_{S'})}}\sum_{u:u\sim{v}}c(u,v)(u_{1}-v_{1})(u_{2}-v_{2})\Big) \\ &\hspace{15pt}+O\Big(l\cdot{\|D^{3}f\|}\sum_{v\in{\text{Int}(V^{\bullet}_{S'})}}\sum_{u:u\sim{v}}c(u,v)|u_{1}-v_{1}||u_{2}-v_{2}|\Big) \\ 
     &\hspace{15pt}+\partial_{2}^{2}f(z_{S})\Big(\sum_{v\in{\text{Int}(V^{\bullet}_{S'})}}\sum_{u:u\sim{v}}c(u,v)(u_{2}-v_{2})^{2}\Big)+O\big(l\cdot{\|D^{3}f\|}\sum_{v\in{\text{Int}(V^{\bullet}_{S'})}}\sum_{u:u\sim{v}}c(u,v)(u_{2}-v_{2})^{2}\big)
\end{align*}
Observe that $(u_{1}-v_{1})^{2}, (u_{2}-v_{2})^{2}, |u_{1}-v_{1}||u_{2}-v_{2}|\leq{|u-v|^{2}}$. Furthermore, using the fact that $c(u,v)|u-v|^{2}=2\hspace{1pt}\text{Area}(Q_{u,v})$ and observing that every quadrilateral $Q$ within $O(\varepsilon)$ of $S$ is counted at most twice in our sum, we have that each of our error terms is of size at most $O(l^{3}\cdot{\|D^{3}f\|})$. Thus: 
\begin{align*}
    &\sum_{v\in{\text{Int}(V^{\bullet}_{S'})}}\sum_{u:u\sim{v}}c(u,v)\big((u-v)^{T}D^{2}f(v)(u-v)\big)= \\
    &=\partial_{1}^{2}f(z_{S})\Big(\underbrace{\sum_{v\in{\text{Int}(V^{\bullet}_{S'})}}\sum_{u:u\sim{v}}c(u,v)(u_{1}-v_{1})^{2}}_{(1)}\Big)+2\partial_{1}\partial_{2}f(z_{S})\Big(\underbrace{\sum_{v\in{\text{Int}(V^{\bullet}_{S'})}}\sum_{u:u\sim{v}}c(u,v)(u_{1}-v_{1})(u_{2}-v_{2})}_{(2)}\Big) \\ 
    &\hspace{15pt}+\partial_{2}^{2}f(z_{S})\Big(\underbrace{\sum_{v\in{\text{Int}(V^{\bullet}_{S'})}}\sum_{u:u\sim{v}}c(u,v)(u_{2}-v_{2})^{2}}_{(3)}\Big)+O(l^{3}\cdot{\|D^{3}f\|})
\end{align*}
To complete the proof of Proposition \ref{prop: continuous Laplacian is average of discrete Laplacian}, we need to understand the behavior of terms (1), (2) and (3). The idea is to use discrete integration by parts to show that each of (1), (2), (3) is equal to the discretization of a certain contour integral. The fact that the perimeter of $S'$ in the dual lattice is comparable to that of $S$ will allow us to show that this discrete contour integral is close to the corresponding continuous contour integral. Consider term (1). Rewriting this as a sum over directed edges we have that: 
\begin{align*}
    &\sum_{v\in{\text{Int}(V^{\bullet}_{S'})}}\sum_{u:u\sim{v}}c(u,v)(u_{1}-v_{1})^{2}=\sum_{\substack{\vec{e}\in{\vec{E^{\bullet}_{S'}}} \\ e^{-}\in{\text{Int}(V^{\bullet}_{S'})}}}c(e)(e^{+}_{1}-e^{-}_{1})^{2}= \\ 
    &=\underbrace{\sum_{\vec{e}\in{\vec{E^{\bullet}_{S'}}}}c(e)(e^{+}_{1}-e^{-}_{1})^{2}}_{(1a)}+\underbrace{\sum_{\substack{\vec{e}\in{\vec{E}}\\ e^{-}\in{\partial{V_{S'}^{\bullet}}}}}c(e)(e_{1}^{+}-e_{1}^{-})^{2}}_{(1b)}
\end{align*}
Term (1a) is just the discrete Dirichlet energy of the function $z\mapsto{\text{Re}(z)}$ on $S'$. Since $\text{Re}(z)$ is discrete harmonic, applying discrete integration by parts we have that: 
\begin{align*}
    \sum_{\vec{e}\in{\vec{E^{\bullet}_{S'}}}}c(e)(e^{+}_{1}-e^{-}_{1})^{2}&= \sum_{\vec{e}\in{\vec{E^{\bullet}_{S'}}}}c(e)e_{1}^{+}(e_{1}^{+}-e_{1}^{-})-\sum_{\vec{e}\in{\vec{E^{\bullet}_{S'}}}}e_{1}^{-}(e_{1}^{+}-e_{1}^{-}) \\ 
    &=\sum_{\vec{e}\in{\vec{E^{\bullet}_{S'}}}}c(e)e_{1}^{+}(e_{1}^{+}-e_{1}^{-})-\sum_{\vec{e}\in{\vec{E^{\bullet}_{S'}}}}e_{1}^{+}(e_{1}^{-}-e_{1}^{+}) \\
    &=2\sum_{\vec{e}\in{\vec{E^{\bullet}_{S'}}}}c(e)e_{1}^{+}(e_{1}^{+}-e_{1}^{-})=-2\sum_{\vec{e}\in{\vec{E^{\bullet}_{S'}}}}c(e)e_{1}^{-}(e_{1}^{+}-e_{1}^{-}) \\
    &= -2\sum_{\substack{\vec{e}\in{\vec{E^{\bullet}_{S'}}}\\ e^{-}\in{\text{Int}(V^{\bullet}_{S'})}}}c(e)e_{1}^{-}(e_{1}^{+}-e_{1}^{-})-2\sum_{\substack{\vec{e}\in{\vec{E^{\bullet}_{S'}}}\\ e^{-}\in{\partial{V^{\bullet}_{S'}}}}}c(e)e_{1}^{-}(e_{1}^{+}-e_{1}^{-}) \\ 
    &=-2\sum_{v\in{\text{Int}(V^{\bullet}_{S'})}}\sum_{u:u\sim{v}}c(u,v)v_{1}(u_{1}-v_{1})-2\sum_{\substack{\vec{e}\in{\vec{E^{\bullet}_{S'}}}\\ e^{-}\in{\partial{V^{\bullet}_{S'}}}}}c(e)e_{1}^{-}(e_{1}^{+}-e_{1}^{-}) \\
    &=-2\sum_{v\in{\text{Int}(V_{S'}^{\bullet})}}\text{Re}(v)\Delta^{\bullet}(\text{Re}(\cdot))(v)-2\sum_{\substack{ \vec{e}\in{\vec{E^{\bullet}_{S'}}} \\ e^{-}\in{\partial{V_{S'}^{\bullet}}}}}c(e)\text{Re}(e^{-})(\text{Re}(e^{+})- \text{Re}(e^{-})) \\ 
    &=-2\sum_{\substack{ \vec{e}\in{\vec{E^{\bullet}_{S'}}} \\ e^{-}\in{\partial{V_{S'}^{\bullet}}}}}c(e)e^{-}_{1}(e^{+}_{1}-e^{-}_{1})
\end{align*}
Suppose $Q=[e^{-},f^{-}, e^{+}, f^{+}]$ is a quadrilateral face of $G$, where the vertices $e^{-}, f^{-}, e^{+}, f^{-}$ are listed in counterclockwise order. Since $\text{Im}(z)$ is the discrete harmonic conjugate of $\text{Re}(z)$ (up to an additive constant), we have that: 
$$
c(e)(e^{+}_{1}-e^{-}_{1})=(f_{2}^{+}-f_{2}^{-})
$$
More simply, since the diagonals of $Q$ are orthogonal, 
$$
\frac{e^{+}-e^{-}}{|e^{+}-e^{-}|}=-i\frac{f^{+}-f^{-}}{|f^{+}-f^{-}|} \hspace{5pt}\Longleftrightarrow\hspace{5pt} \frac{e^{+}_{1}-e^{-}_{1}}{|e^{+}-e^{-}|}= \frac{f^{+}_{2}-f^{-}_{2}}{|f^{+}-f^{-}|}, \hspace{5pt} \frac{e^{+}_{2}-e^{-}_{2}}{|e^{+}-e^{-}|}= -\frac{f^{+}_{1}-f^{-}_{1}}{|f^{+}-f^{-}|}
$$
Since $c(e)=\frac{|f^{+}-f^{-}|}{|e^{+}-e^{-}|}$, $f_{1}^{+}-f_{1}^{-}=-c(e)(e^{+}_{2}-e^{-}_{2})$, $f^{+}_{2}-f^{-}_{2}=c(e)(e^{+}_{1}-e^{-}_{1})$. Applying this to the problem at hand, we have that: 
$$
-2\sum_{\substack{ \vec{e}\in{\vec{E^{\bullet}_{S'}}} \\ e^{-}\in{\partial{V_{S'}^{\bullet}}}}}c(e)e^{-}_{1}(e^{+}_{1}-e^{-}_{1})=-2\sum_{\substack{ \vec{e}\in{\vec{E^{\bullet}_{S'}}} \\ e^{-}\in{\partial{V_{S'}^{\bullet}}} \\ Q_{e^{-},e^{+}}=[e^{-}, f^{-}, e^{+}, f^{+}]}}e_{1}^{-}(f_{2}^{+}-f_{2}^{-})
$$
Let $\partial{\vec{E}^{\bullet}_{S'}}$ denote the set of directed edges of $S'$ that go from a vertex of $\partial{V^{\bullet}_{S'}}$ to a vertex of $\text{Int}(V^{\bullet}_{S'})$. Because of how we defined $S'$, for all directed edges $\vec{e}=(e^{-},e^{+})$ under consideration in the sum above, $e^{-}\in\partial{V_{S'}^{\bullet}}$, $e^{+}\in{\text{Int}(V^{\bullet}_{S'})}$ and $f^{-},f^{+}\in{\partial{V^{\circ}_{S'}}}$ where $Q_{e^{-},e^{+}}=[e^{-}, f^{-}, e^{+}, f^{+}]$. Furthermore, the directed edges $\vec{f}=(f^{-},f^{+})$ dual to the directed edges $\vec{e}\in{\partial{E^{\bullet}_{S'}}}$, form a closed contour in $(S')^{\circ}$, oriented clockwise, with length is comparable to $l$. We use $\partial^{\circlearrowright}{E^{\circ}_{S'}}$ to denote the set of directed edges in this contour. Intuitively, the sum above is a discretization of the integral of $xdy$ over the contour $\partial^{\circlearrowright}{E^{\circ}_{S'}}$. We now make this intuition precise. \\ \\
Suppose $\vec{f}\in{\partial^{\circlearrowright}{E^{\circ}_{S'}}}$ and $Q_{f^{-},f^{+}}=[e^{-},f^{-},e^{+},f^{+}]$. Then: 
$$
|e_{1}^{-}(f_{2}^{+}-f_{2}^{-})-\oint_{f^{-}}^{f^{+}}xdy|\leq{\varepsilon|f^{+}-f^{-}|}
$$
Summing over directed edges $\vec{f}\in{\partial^{\circlearrowright}{E^{\circ}_{S'}}}$,
\begin{align*}
    \big|\sum_{\substack{ \vec{e}\in{\vec{E^{\bullet}_{S'}}} \\ e^{-}\in{\partial{V_{S'}^{\bullet}}} \\ Q_{e^{-},e^{+}}=[e^{-}, f^{-}, e^{+}, f^{+}]}}e_{1}^{+}(f_{2}^{+}-f_{2}^{-})-\oint_{\partial^{\circlearrowright}{E^{\circ}_{S'}}}xdy\big|\leq{\varepsilon\cdot\text{length}(\partial^{\circlearrowright}{E^{\circ}_{S'}})}\lesssim{\varepsilon\cdot{l}}
\end{align*}
Thus, we see that term (1a) is close to the contour integral $\oint_{\partial^{\circlearrowright}{E^{\circ}_{S'}}}xdy$. By Green's theorem: 
$$
\oint_{\partial^{\circlearrowright}{E^{\circ}_{S'}}}xdy=-l^{2}+O(\varepsilon\cdot{l})
$$
Thus: 
$$
(1a)= 2l^{2}+O(\varepsilon\cdot{l})
$$
Term (1b) can be dealt with in the same way as all the error terms we saw previously:
\begin{align*}
    \sum_{\substack{\vec{e}\in{\vec{E}}\\ e^{-}\in{\partial{V_{S'}^{\bullet}}}}}c(e)(e_{1}^{+}-e_{1}^{-})^{2}\leq{\sum_{\substack{\vec{e}\in{\vec{E}}\\ e^{-}\in{\partial{V_{S'}^{\bullet}}}}}c(e)|e^{+}-e^{-}|^{2}}=2\sum_{\substack{\vec{e}\in{\vec{E}}\\ e^{-}\in{\partial{V_{S'}^{\bullet}}}}}\text{Area}(Q_{e})=O(\varepsilon\cdot{l})
\end{align*}
Putting all this together, we get that: 
$$
(1)=2l^{2}+O(\varepsilon\cdot{l})
$$
A similar story plays out in the case of terms (2) and (3). Rewriting terms (2) and (3) as sums over directed edges we have that: 
\begin{align*}
    (2)&=\sum_{v\in{\text{Int}(V^{\bullet}_{S'})}}\sum_{u:u\sim{v}}c(u,v)(u_{1}-v_{1})(u_{2}-v_{2})=\sum_{\substack{\vec{e}\in{\vec{E^{\bullet}_{S'}}} \\ e^{-}\in{\text{Int}(V^{\bullet}_{S'})}}}c(e)(e^{+}_{1}-e^{-}_{1})(e^{+}_{2}-e^{-}_{2})= \\ 
    &=\underbrace{\sum_{\vec{e}\in{\vec{E^{\bullet}_{S'}}}}c(e)(e^{+}_{1}-e^{-}_{1})(e^{+}_{2}-e^{-}_{2})}_{(2a)}+\underbrace{\sum_{\substack{\vec{e}\in{\vec{E}}\\ e^{-}\in{\partial{V_{S'}^{\bullet}}}}}c(e)(e_{1}^{+}-e_{1}^{-})(e_{2}^{+}-e_{2}^{-})}_{(2b)}
\end{align*}
\begin{align*}
    (3)&=\sum_{v\in{\text{Int}(V^{\bullet}_{S'})}}\sum_{u:u\sim{v}}c(u,v)(u_{2}-v_{2})^{2}=\sum_{\substack{\vec{e}\in{\vec{E^{\bullet}_{S'}}} \\ e^{-}\in{\text{Int}(V^{\bullet}_{S'})}}}c(e)(e^{+}_{2}-e^{-}_{2})^{2}= \\ 
    &=\underbrace{\sum_{\vec{e}\in{\vec{E^{\bullet}_{S'}}}}c(e)(e^{+}_{2}-e^{-}_{2})^{2}}_{(3a)}+\underbrace{\sum_{\substack{\vec{e}\in{\vec{E}}\\ e^{-}\in{\partial{V_{S'}^{\bullet}}}}}c(e)(e_{2}^{+}-e_{2}^{-})^{2}}_{(3b)}
\end{align*}
By the same argument we used to handle term (1b), terms (2b) and (3b) are of size $O(\varepsilon\cdot{l})$. Applying discrete integration by parts to terms (2a) and (3a) and using the fact that $f^{+}_{1}-f^{-}_{1}=-c(e)(e^{+}_{2}-e^{-}_{2})$ and $f^{+}_{2}-f^{-}_{2}=c(e)(e^{+}_{1}-e^{-}_{1})$ for any quadrilateral $Q=[e^{-},f^{-},e^{+},f^{+}]$ in $G$, we have that: 
\begin{align*}
    (2a)&=\sum_{\vec{e}\in{\vec{E^{\bullet}_{S'}}}}c(e)(e^{+}_{1}-e^{-}_{1})(e^{+}_{2}-e^{-}_{2})=-2\sum_{v\in{\text{Int}(V^{\bullet}_{S'})}}\text{Re}(v)\Delta^{\bullet}(\text{Im}(\cdot))(v)-2\sum_{\substack{\vec{e}\in{\vec{E^{\bullet}_{S'}}} \\ e^{-}\in{\partial{V^{\bullet}_{S'}}}}}c(e)e_{1}^{-}(e_{2}^{+}-e_{2}^{-}) \\
    &=-2\sum_{\substack{\vec{e}\in{\vec{E^{\bullet}_{S'}}} \\ e^{-}\in{\partial{V^{\bullet}_{S'}}}}}c(e)e_{1}^{-}(e_{2}^{+}-e_{2}^{-})=2\sum_{\substack{\vec{f}\in{\partial^{\circlearrowright}{E^{\circ}_{S'}}}\\ Q_{f^{-},f^{+}}=[e^{-},f^{-},e^{+},f^{+}]}}e_{1}^{-}(f_{1}^{+}-f_{1}^{-})
\end{align*}
\begin{align*}
    (3a)&=\sum_{\vec{e}\in{\vec{E^{\bullet}_{S'}}}}c(e)(e^{+}_{2}-e^{-}_{2})^{2}=-2\sum_{v\in{\text{Int}(V^{\bullet}_{S'})}}\text{Im}(v)\Delta^{\bullet}(\text{Im}(\cdot))(v)-2\sum_{\vec{e}\in{\vec{E^{\bullet}_{S'}}}}c(e)e_{2}^{-}(e_{2}^{+}-e_{2}^{-}) \\
    &=-2\sum_{\vec{e}\in{\vec{E^{\bullet}_{S'}}}}c(e)e_{2}^{-}(e_{2}^{+}-e_{2}^{-})=2\sum_{\substack{\vec{f}\in{\partial^{\circlearrowright}{E^{\circ}_{S'}}}\\ Q_{f^{-},f^{+}}=[e^{-},f^{-},e^{+},f^{+}]}}e_{2}^{-}(f_{1}^{+}-f_{1}^{-})
\end{align*}
where: 
\begin{equation*}
    \big|\sum_{\substack{\vec{f}\in{\partial^{\circlearrowright}{E^{\circ}_{S'}}}\\ Q_{f^{-},f^{+}}=[e^{-},f^{-},e^{+},f^{+}]}}e_{1}^{-}(f_{1}^{+}-f_{1}^{-})-\oint_{\partial^{\circlearrowright}{E^{\circ}_{S'}}}xdx\big|\lesssim{\varepsilon\cdot{l}}
\end{equation*}
\begin{equation*}
    \big|\sum_{\substack{\vec{f}\in{\partial^{\circlearrowright}{E^{\circ}_{S'}}}\\ Q_{f^{-},f^{+}}=[e^{-},f^{-},e^{+},f^{+}]}}e_{2}^{-}(f_{1}^{+}-f_{1}^{-})-\oint_{\partial^{\circlearrowright}{E^{\circ}_{S'}}}ydx\big|\lesssim{\varepsilon\cdot{l}}
\end{equation*}
By Green's theorem: 
\begin{equation*}
    \oint_{\partial^{\circlearrowright}{E^{\circ}_{S'}}}xdx=0
\end{equation*}
\begin{equation*}
    \oint_{\partial^{\circlearrowright}{E^{\circ}_{S'}}}ydx=l^{2}+O(\varepsilon\cdot{l})
\end{equation*}
And so: 
\begin{equation*}
    (2)=O(\varepsilon\cdot{l}), \hspace{25pt} (3)=2l^{2}+O(\varepsilon\cdot{l})
\end{equation*}
Armed with these estimates for terms (1), (2) and (3) in our sum from earlier, we have that: 
\begin{align*}
\sum_{V^{\bullet}\cap{S}}\Delta^{\bullet}f(v)&=\partial_{1}^{2}f(z_{S})\big(l^{2}+O(\varepsilon\cdot{l})\big)+\partial_{1}\partial_{2}f(z_{S})\cdot{O(\varepsilon\cdot{l})}+\partial_{2}^{2}f(z_{S})(l^{2}+O(\varepsilon\cdot{l}))+O(l^{3}\|D^{3}f\|) \\ 
&=\Delta{f}(z_{S})\cdot{l^{2}}+O(\varepsilon\cdot{l}\cdot\|D^{2}f\|)+O(l^{3}\cdot\|D^{3}f\|) \\
&=\int_{S}\Delta{f}(x)dA(x)+O(\varepsilon\cdot{l}\cdot\|D^{2}f\|)+O(l^{3}\cdot\|D^{3}f\|)
\end{align*}
\end{proof}
\noindent Having shown that for a smooth function, the continuous Laplacian agrees with the discrete Laplacian, on average, we are ready to prove the main result of this section: 
\begin{prop}
\label{prop: convolution has small Laplacian}
Suppose $G=(V^{\bullet}\sqcup{V^{\circ}}, E)$ is an orthodiagonal map with edges of length at most $\varepsilon$, $h^{\bullet}:V^{\bullet}\rightarrow{\mathbb{R}}$ is harmonic, and $\varphi$ is a smooth mollifier supported on the unit ball $B(0,1)\subseteq{\mathbb{R}^{2}}$. We can think of $h^{\bullet}$ as a function on $\widehat{G}$ by extending $h^{\bullet}$ to $\widehat{G}$ in any sort of sensible way. For instance, we could triangulate the faces of $G^{\bullet}$ and then define $h^{\bullet}$ on each face of $G^{\bullet}$ by linear interpolation. By our a priori regularity estimates for discrete harmonic functions on orthodiagonal maps (see Lemma \ref{lem: discrete harmonic functions are beta Holder in the bulk}), the exact details of how we choose to extend $h^{\bullet}$ to $\widehat{G}$ don't matter. \\ \\  For any $\delta>0$, define $\varphi_{\delta}(z):=\delta^{-2}\varphi(\delta^{-1}z)$. Then $\varphi_{\delta}$ is a smooth mollifier supported on the ball $B(0,\delta)\subseteq{\mathbb{R}^{2}}$. Fix $\delta>0$, $z\in{\widehat{G}}$ so that $\varepsilon\lesssim\delta\leq{\frac{d}{2}}$ where $d=d_{z}=\text{dist}(z,\partial{\widehat{G}})$. Then: 
\begin{equation*}
    \Delta(\varphi_{\delta}\ast{h^{\bullet}})(z)=O(\varepsilon^{\frac{1}{2}}\cdot\delta^{-\frac{5}{2}}\cdot(\|D^{2}\varphi\|+\|D^{3}\varphi\|)\cdot{\|h^{\bullet}\|})+O(\varepsilon^{\frac{\beta}{1+\beta}}\cdot\delta^{-2}\cdot{d^{-\frac{\beta}{1+\beta}}}\cdot(\|D^{2}\varphi\|+\|D^{3}\varphi\|)\cdot{\|h^{\bullet}\|})
\end{equation*}
In particular, if $\delta\geq{\varepsilon^{\frac{1-\beta}{1+\beta}}d^{\frac{2\beta}{1+\beta}}}$, we have that:
\begin{equation*}
    \Delta(\varphi_{\delta}\ast{h^{\bullet}})(z)=O(\varepsilon^{\frac{1}{2}}\cdot\delta^{-\frac{5}{2}}\cdot(\|D^{2}\varphi\|+\|D^{3}\varphi\|)\cdot{\|h^{\bullet}\|})
\end{equation*}
Otherwise: 
\begin{equation*}
    \Delta(\varphi_{\delta}\ast{h^{\bullet}})(z)=O(\varepsilon^{\frac{\beta}{1+\beta}}\cdot\delta^{-2}\cdot{d^{-\frac{\beta}{1+\beta}}}\cdot(\|D^{2}\varphi\|+\|D^{3}\varphi\|)\cdot{\|h^{\bullet}\|})
\end{equation*}
\end{prop}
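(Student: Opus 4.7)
The plan is to compare $\Delta(\varphi_\delta \ast h^\bullet)(z) = \int_{\widehat{G}} (\Delta \varphi_\delta)(z-y)\,h^\bullet(y)\,dA(y)$ against the \emph{vanishing} discrete expression $\sum_{v\in V^\bullet} h^\bullet(v)\,\Delta^\bullet f_z(v)$, where $f_z(y) := \varphi_\delta(z-y)$. Since $\delta \leq d/2$, the support of $f_z$ lies inside $B(z,\delta) \subset B(z,d/2)$, so every vertex $v$ with $f_z(v)\neq 0$ is interior to $G^\bullet$. Symmetry of the discrete Dirichlet form then gives
\begin{equation*}
\sum_{v} h^\bullet(v)\,\Delta^\bullet f_z(v) \;=\; \sum_{v} f_z(v)\,\Delta^\bullet h^\bullet(v) \;=\; 0,
\end{equation*}
the last equality because $\Delta^\bullet h^\bullet(v) = 0$ wherever $f_z(v) \neq 0$. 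The proof is therefore reduced to bounding the difference between the integral and this zero sum.

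To bridge the two expressions, I would cover $B(z,\delta)$ by a grid of axis-aligned squares $\{S_j\}$ of side length $l$ (chosen later) with centers $z_j$, and split the difference as $\mathcal{E}_1 + \mathcal{E}_2 + \mathcal{E}_3$, where $\mathcal{E}_1 := \sum_j \int_{S_j} \Delta f_z(y)\,(h^\bullet(y)-h^\bullet(z_j))\,dA(y)$ freezes $h^\bullet$ at the cell centers, $\mathcal{E}_2 := \sum_j h^\bullet(z_j)\bigl(\int_{S_j} \Delta f_z\,dA - \sum_{v\in V^\bullet \cap S_j} \Delta^\bullet f_z(v)\bigr)$ replaces the cell integral of $\Delta f_z$ by the discrete Laplacian sum via Proposition \ref{prop: continuous Laplacian is average of discrete Laplacian}, and $\mathcal{E}_3 := \sum_j \sum_{v\in V^\bullet \cap S_j} (h^\bullet(z_j)-h^\bullet(v))\,\Delta^\bullet f_z(v)$ restores the vertex values of $h^\bullet$. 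For $\mathcal{E}_1$ and $\mathcal{E}_3$, the H\"older estimate of Lemma \ref{lem: discrete harmonic functions are beta Holder in the bulk} --- applied with distance to the boundary of order $d$, since $B(z,\delta)\subset B(z,d/2)$ --- controls $|h^\bullet(y) - h^\bullet(z_j)|$ and $|h^\bullet(v) - h^\bullet(z_j)|$ by $O\bigl(\|h^\bullet\|\,((l\vee K\varepsilon)/d)^\beta\bigr)$; for $\mathcal{E}_2$, Proposition \ref{prop: continuous Laplacian is average of discrete Laplacian} supplies the per-square error $O(\varepsilon l \|D^2 f_z\| + l^3 \|D^3 f_z\|)$.

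Quantifying the sizes is then a routine exercise in mollifier scaling. From $\|D^k \varphi_\delta\|_\infty \lesssim \delta^{-2-k}\|D^k \varphi\|$ and $|\mathrm{supp}(\Delta \varphi_\delta)| \lesssim \delta^2$, one obtains $\int |\Delta \varphi_\delta|\,dA \lesssim \delta^{-2}\|D^2 \varphi\|$; a Taylor expansion at each vertex (using that linear functions are discrete harmonic and $c(u,v)|u-v|^2 = 2\,\mathrm{Area}(Q_{u,v})$) yields the matching discrete bound $\sum_v |\Delta^\bullet f_z(v)| \lesssim \delta^{-2}\|D^2 \varphi\|$. Since only $\sim(\delta/l)^2$ squares meet $B(z,\delta)$, assembling the three contributions and writing $\Phi := \|D^2 \varphi\|+\|D^3 \varphi\|$ gives (whenever $K\varepsilon \lesssim l \lesssim \delta$)
\begin{equation*}
\bigl|\Delta(\varphi_\delta \ast h^\bullet)(z)\bigr| \;\lesssim\; \|h^\bullet\|\,\Phi\,\Bigl[\,(l/d)^\beta\,\delta^{-2} \;+\; \varepsilon\,(l\delta^2)^{-1} \;+\; l\,\delta^{-3}\,\Bigr].
\end{equation*}

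It remains to optimize $l$: two increasing terms --- the H\"older term $(l/d)^\beta\delta^{-2}$ and the $D^3$ term $l/\delta^3$ --- compete with the decreasing $D^2$ term $\varepsilon/(l\delta^2)$. Balancing the latter against $l/\delta^3$ gives $l=\sqrt{\varepsilon\delta}$ with common value $\varepsilon^{1/2}/\delta^{5/2}$, while balancing it against $(l/d)^\beta\delta^{-2}$ gives $l=(\varepsilon d^\beta)^{1/(1+\beta)}$ with common value $\varepsilon^{\beta/(1+\beta)} d^{-\beta/(1+\beta)}\delta^{-2}$. Taking whichever is smaller --- equivalently, their sum as a clean upper bound --- delivers the stated estimate, with the crossover at $\delta \asymp \varepsilon^{(1-\beta)/(1+\beta)} d^{2\beta/(1+\beta)}$, matching the two ``in particular'' clauses. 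The main obstacle is precisely this three-way bookkeeping: each of the H\"older, $D^2$-, and $D^3$-errors scales differently in $\varepsilon, l, \delta, d$, and one must verify that both optimizing choices of $l$ respect $l\gtrsim K\varepsilon$ (so that Lemma \ref{lem: discrete harmonic functions are beta Holder in the bulk} applies cleanly) and $l\lesssim \delta$ (so the cell decomposition is nontrivial) --- both of which follow from the hypothesis $\varepsilon \lesssim \delta \leq d/2$.
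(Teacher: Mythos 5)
Your proposal follows essentially the same route as the paper's own proof: cover $B(z,\delta)$ by squares of side $\ell$, split the error into the three sources (freezing $h^\bullet$ on each cell, comparing the continuous integral of $\Delta \varphi_\delta$ to the discrete Laplacian sum via Proposition~\ref{prop: continuous Laplacian is average of discrete Laplacian}, and restoring vertex values), bound the first and third via Lemma~\ref{lem: discrete harmonic functions are beta Holder in the bulk}, exploit the vanishing of $\sum_w \varphi_\delta(z-w)\Delta^\bullet h^\bullet(w)$ by discrete Green's identity, and optimize the resulting three-term bound in $\ell$, with the crossover at $\delta \asymp \varepsilon^{(1-\beta)/(1+\beta)} d^{2\beta/(1+\beta)}$. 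The only cosmetic differences (freezing at the cell center rather than at a vertex $w_S\in S\cap V^\bullet$, and folding the cover-overshoot correction into the $\ell\delta^{-3}$ term implicitly) do not change anything.
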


\begin{proof}
    We compute: 
    \begin{align*}
        \Delta(\varphi_{\delta}\ast{h^{\bullet}})(z)&=\int_{\mathbb{R}^{2}}\Delta_{z}\varphi_{\delta}(z-w)h^{\bullet}(w)dA(w)=\int_{\mathbb{R}^{2}}\Delta_{w}\varphi_{\delta}(z-w)h^{\bullet}(w)dA(w) \\ &=\int_{B(z,\delta)}\Delta_{w}\varphi_{\delta}(z-w)h^{\bullet}(w)dA(w)
    \end{align*}
    Let $\mathcal{S}$ be a family of pairwise disjoint squares of side length $\ell$ that cover $B(z,\delta)$ up to a region of area $O(\delta{\ell})$. Here $\ell$ is a real parameter such that   $\varepsilon\ll{\ell}\ll{\delta}$, whose exact value will be specified later. Since each square has area $\ell^{2}$, $\mathcal{S}$ consists of $O((\delta/\ell)^{2})$ squares of side length $\ell$. Then: 
    \begin{align*}
        \Delta(\varphi_{\delta}\ast{h^{\bullet}})(z)=&\int_{B(z,\delta)}\Delta_{w}\varphi_{\delta}(z-w)h^{\bullet}(w)dA(w) \\ 
        =&\underbrace{\sum_{S\in{\mathcal{S}}}\Big(\int_{S}\Delta_{w}\varphi_{\delta}(z-w)h^{\bullet}(w)dA(w)\Big)}_{(1)}-\underbrace{\int_{(\cup_{S\in{\mathcal{S}}}S)\setminus{B(z,\delta)}}\Delta_{w}\varphi_{\delta}(z-w)h^{\bullet}(w)dA(w)}_{(2)}
    \end{align*}
    where: 
    \begin{equation*}
        |(2)|=O(\delta\cdot{\ell}\cdot{\|D^{2}\varphi_{\delta}\|\cdot{\|h^{\bullet}\|}})=O(\ell\cdot{\delta^{-3}}\cdot{\|D^{2}\varphi\|\cdot{\|h^{\bullet}\|}})
    \end{equation*}
    To handle term (1), fix $S\in{\mathcal{S}}$ and select a point $w_{S}\in{S\cap{V^{\bullet}}}$. Then we write: 
    \begin{equation}
    \label{summands in term (1) split in two}
        \int_{S}\Delta_{w}\varphi_{\delta}(z-w)h^{\bullet}(w)dA(w)=\underbrace{h^{\bullet}(w_{S})\int_{S}\Delta_{w}\varphi_{\delta}(z-w)dA(w)}_{(a)}+\underbrace{\int_{S}\Delta_{w}\varphi_{\delta}(z-w)\big(h^{\bullet}(w)-h^{\bullet}(w_{S})\big)dA(w)}_{(b)} 
    \end{equation}
    Since the square $S$ has area $\ell^{2}$ and Lemma \ref{lem: discrete harmonic functions are beta Holder in the bulk} tells us that $|h^{\bullet}(w)-h^{\bullet}(w_{S})|=O(\|h^{\bullet}\|\ell^{\beta}d^{-\beta})$ for any $w\in{S}$, it follows that: 
    \begin{equation*}
        (b)=\int_{S}\Delta_{w}\varphi_{\delta}(z-w)\big(h^{\bullet}(w)-h^{\bullet}(w_{S})\big)dA(w)=O(\ell^{2}\cdot{\|D^{2}\varphi_{\delta}\|}\cdot{\|h^{\bullet}\|(\ell/\delta)^{\beta}})=O(\ell^{2+\beta}\cdot{\delta^{-4}}\cdot{d^{-\beta}}\|D^{2}\varphi\|\cdot\|h^{\bullet}\|)
    \end{equation*}
    As far as handling term (a) in Equation \ref{summands in term (1) split in two}:
    \begin{align*}
        (a)=&h^{\bullet}(w_{S})\int_{S}\Delta_{w}\varphi_{\delta}(z-w)dA(w)\\ =&h^{\bullet}(w_{S})\Big(\Big(\sum_{w\in{V^{\bullet}\cap{S}}}\Delta_{w}^{\bullet}\varphi_{\delta}(z-w)\Big)+O(\varepsilon\cdot{\ell}\cdot{\|D^{2}\varphi_{\delta}\|})+O(\ell^{3}\cdot{\|D^{3}\varphi_{\delta}\|})\Big) \\
        =&\Big(\sum_{w\in{V^{\bullet}\cap{S}}}\Delta_{w}^{\bullet}\varphi_{\delta}(z-w)h^{\bullet}(w)\Big)+\Big(\sum_{w\in{V^{\bullet}\cap{S}}}\Delta_{w}^{\bullet}\varphi_{\delta}(z-w)\big(h^{\bullet}(w_{S}-h^{\bullet}(w)\big)\Big) \\ &+O(\varepsilon\cdot{\ell}\cdot{\delta^{-4}}\cdot{\|D^{2}\varphi\|}\cdot{\|h^{\bullet}\|})+O(\ell^{3}\cdot{\delta^{-5}}\cdot{\|D^{3}\varphi\|\cdot{\|h^{\bullet}\|}}) \\
        =&\Big(\sum_{w\in{V^{\bullet}\cap{S}}}\Delta_{w}^{\bullet}\varphi_{\delta}(z-w)h^{\bullet}(w)\Big)+O(\ell^{2+\beta}\cdot{\delta^{-4}}d^{-\beta}\cdot{\|D^{2}\varphi\|\cdot{\|h^{\bullet}\|}})\\ &+O(\varepsilon\cdot{\ell}\cdot{\delta^{-4}}\cdot{\|D^{2}\varphi\|}\cdot{\|h^{\bullet}\|})+O(\ell^{3}\cdot{\delta^{-5}}\cdot{\|D^{3}\varphi\|\cdot{\|h^{\bullet}\|}})
    \end{align*}
     Briefly, 
     \begin{itemize}
         \item the second equality follows from Proposition \ref{prop: continuous Laplacian is average of discrete Laplacian}.
         \item the fourth equality follows from the following crude estimate: 
         \begin{equation*}
             \Big|\sum_{w\in{V^{\bullet}\cap{S}}}\Delta_{w}^{\bullet}\varphi_{\delta}(z-w)\big(h^{\bullet}(w_{S}-h^{\bullet}(w)\big)\Big|\leq{\sum_{w\in{V^{\bullet}\cap{S}}}|\Delta_{w}^{\bullet}}\varphi_{\delta}(z-w)|\cdot\|h^{\bullet}\|\cdot\ell^{\beta}\cdot{d^{-\beta}}
         \end{equation*}
         where: 
         \begin{align*}
            |\Delta_{w}^{\bullet}\varphi_{\delta}(z-w)|&={\big|\sum_{\substack{u\in{V^{\bullet}} \\ u\sim{w}}}c(u,w)(\varphi_{\delta}(z-u)-\varphi_{\delta}(z-w))\big|}\\ &={\Big|\sum_{\substack{u\in{V^{\bullet}} \\ u\sim{w}}}c(u,w)\big(-\nabla{\varphi_{\delta}}(z-w)(w-u)+O(\|D^{2}\varphi_{\delta}\|\cdot|u-w|^{2})\big)\Big|}\\
            &=\Big|\sum_{\substack{u\in{V^{\bullet}} \\ u\sim{w}}}c(u,w)\nabla{\varphi_{\delta}}(z-w)(w-u)\Big|+O\Big(\|D^{2}\varphi_{\delta}\|\sum_{\substack{u\in{V^{\bullet}} \\ u\sim{w}}}\text{Area}(Q_{u,v})\Big) \\
            &=O\Big(\delta^{-4}\cdot\|D^{2}\varphi\|\sum_{\substack{u\in{V^{\bullet}} \\ u\sim{w}}}\text{Area}(Q_{u,v})\Big)
         \end{align*}
         The first term on the third line above vanishes, since any linear function is discrete harmonic. Summing over $w\in{V^{\bullet}\cap{S}}$, every quadrilateral face $Q$ of $G$ that lies within $\varepsilon$ of $S$ is counted at most twice, where $\varepsilon\ll{\ell}$. Hence: 
         \begin{equation*}
             \sum_{w\in{V^{\bullet}\cap{S}}}|\Delta_{w}^{\bullet}\varphi_{\delta}(z-w)|=O(\ell^{2}\cdot{\delta^{-4}}\cdot{\|D^{2}\varphi\|})
         \end{equation*}
     \end{itemize}
   Summing over $S\in{\mathcal{S}}$, since $\mathcal{S}$ consists of $O(\delta^{2}\cdot\ell^{-2})$ squares, we have that: 
    \begin{align*}
        (1)=&\sum_{S\in{\mathcal{S}}}\Big(\sum_{w\in{V^{\bullet}\cap{S}}}\Delta_{w}^{\bullet}\varphi_{\delta}(z-w)h^{\bullet}(w)\Big)+O(\varepsilon\cdot{\ell^{-1}}\cdot{\delta^{-2}}\cdot{\|D^{2}\varphi\|}\cdot{\|h^{\bullet}\|})\\ &+O(\ell^{\beta}\cdot{\delta^{-2}}d^{-\beta}\cdot{\|D^{2}\varphi\|\cdot{\|h^{\bullet}\|}})+O(\ell\cdot{\delta^{-3}}\cdot{\|D^{3}\varphi\|\cdot{\|h^{\bullet}\|}})
    \end{align*}
    Since the squares in $\mathcal{S}$ cover $B(z,\delta)$ and $\phi_{\delta}(z-w)$ is supported on $B(z,\delta)$ (as a function of $w$), we have that: 
    \begin{equation*}
        \sum_{S\in{\mathcal{S}}}\Big(\sum_{w\in{V^{\bullet}\cap{S}}}\Delta_{w}^{\bullet}\varphi_{\delta}(z-w)h^{\bullet}(w)\Big)=\sum_{w\in{\text{Int}(V^{\bullet})}}\Delta_{w}^{\bullet}\varphi_{\delta}(z-w)h^{\bullet}(w)=\sum_{w\in{\text{Int}(V^{\bullet})}}\varphi_{\delta}(z-w)\underbrace{\Delta^{\bullet}h^{\bullet}(w)}_{=0}=0
    \end{equation*}
    Putting all this together, we get that: 
    \begin{equation}
    \label{eqn: Laplacian estimate before optimizing in ell}
        \Delta(\varphi_{\delta}\ast{h^{\bullet}})(z)=O(\varepsilon\cdot{\ell^{-1}}\cdot{\delta^{-2}}\cdot{\|D^{2}\varphi\|}\cdot{\|h^{\bullet}\|})+O(\ell^{\beta}\cdot{\delta^{-2}}d^{-\beta}\cdot{\|D^{2}\varphi\|\cdot{\|h^{\bullet}\|}})+O(\ell\cdot{\delta^{-3}}{(\|D^{3}\varphi\|+\|D^{2}\varphi\|){\|h^{\bullet}\|}})
    \end{equation}
    In the estimate above, $d, \delta$ and $\varepsilon$ are given to us, whereas $\ell$ is just some parameter satisfying $\varepsilon\ll{\ell}\ll{\delta}$. To complete our proof, the last thing we need to do is optimize in $\ell$. \\ \\
    First observe that taking $\ell\asymp{\delta}$ or $\ell\asymp{\varepsilon}$ doesn't give us an effective estimate. Hence, the optimal choice of $\ell$ must lie on some intermediate scale. In particular, the asymptotically optimal choice of $\ell$ corresponds to a critical point of the function $f(\ell)=\ell^{-1}\varepsilon+\ell^{\beta}d^{-\beta}+\ell\delta^{-1}$. 
    \begin{align*}
        f'(\ell)=\delta^{-1}+\beta\ell^{\beta-1}d^{-\beta}-\ell^{-2}\varepsilon=0\hspace{5pt}\Longleftrightarrow\hspace{5pt}\delta\varepsilon=\ell^{2}+\beta\ell^{1+\beta}\delta{d^{-\beta}}
    \end{align*}
    When this equality holds, we either have: 
    \begin{equation*}
        \ell^{2}\asymp{\delta\varepsilon} \hspace{5pt}\Longleftrightarrow\hspace{5pt}\ell\asymp{\sqrt{\delta\varepsilon}}
    \end{equation*}
    or: 
    \begin{equation*}
        \ell^{1+\beta}\delta{d^{-\beta}}\asymp{\delta\varepsilon} \hspace{5pt}\Longleftrightarrow\hspace{5pt} \ell\asymp{d^{\frac{\beta}{1+\beta}}\varepsilon^{\frac{1}{1+\beta}}}
    \end{equation*}
    which of these asymptotics holds at the critical point depends on which of the relevant quantities- $\sqrt{\delta\varepsilon}$ or $d^{\frac{\beta}{1+\beta}}\varepsilon^{\frac{1}{1+\beta}}$- is smaller. When $\sqrt{\delta\varepsilon}\leq{d^{\beta/(1+\beta)}\varepsilon^{1/(1+\beta)}} \hspace{5pt}\Longleftrightarrow\hspace{5pt}\delta\leq{d^{\frac{2\beta}{1+\beta}}\varepsilon^{\frac{1-\beta}{1+\beta}}}$, we have that $\ell\asymp{\sqrt{\delta\varepsilon}}$ near the critical point, giving us the estimate: 
    \begin{align*}
        \Delta(\varphi_{\delta}\ast{h^{\bullet}})(z)&=O(\varepsilon^{\frac{1}{2}}\cdot\delta^{-\frac{5}{2}}\cdot(\|D^{2}\varphi\|+\|D^{2}\varphi\|)\cdot{\|h^{\bullet}\|})+O(\varepsilon^{\frac{\beta}{2}}\cdot{\delta^{(\frac{\beta}{2}-2)}}\cdot{d^{-\beta}}\cdot{\|D^{2}\varphi\|}\cdot{\|h^{\bullet}\|}) \\ 
        &=O(\varepsilon^{\frac{1}{2}}\cdot\delta^{-\frac{5}{2}}\cdot(\|D^{2}\varphi\|+\|D^{3}\varphi\|)\cdot{\|h^{\bullet}\|})
    \end{align*}
    The second equality follows from the fact that when $\delta\leq{d^{\frac{2\beta}{1+\beta}}\varepsilon^{\frac{1-\beta}{1+\beta}}}$, we have that $\varepsilon^{\frac{1}{2}}\delta^{-\frac{5}{2}}\geq{\varepsilon^{\frac{\beta}{2}}{\delta^{(\frac{\beta}{2}-2)}}{d^{-\beta}}}$. When $\delta\geq{d^{\frac{2\beta}{1+\beta}}\varepsilon^{\frac{1-\beta}{1+\beta}}}$, we have that $\ell\asymp{d^{\frac{\beta}{1+\beta}}\varepsilon^{\frac{1}{1+\beta}}}$ at the critical point, giving us the estimate: 
    \begin{align*}
        \Delta(\varphi_{\delta}\ast{h^{\bullet}})(z)&=O(\varepsilon^{\frac{\beta}{1+\beta}}\cdot\delta^{-2}\cdot{d^{-\frac{\beta}{1+\beta}}}\cdot\|D^{2}\varphi\|\cdot{\|h^{\bullet}\|})+O(\varepsilon^{\frac{1}{1+\beta}}\cdot{\delta^{-3}}\cdot{d^{\frac{\beta}{1+\beta}}}(\|D^{2}\varphi\|+\|D^{3}\varphi\|)\cdot{\|h^{\bullet}\|}) \\
        &=O(\varepsilon^{\frac{\beta}{1+\beta}}\cdot\delta^{-2}\cdot{d^{-\frac{\beta}{1+\beta}}}\cdot(\|D^{2}\varphi\|+\|D^{3}\varphi\|)\cdot{\|h^{\bullet}\|})
    \end{align*}
    Again, the second equality follows from the fact that when $\delta\geq{d^{\frac{2\beta}{1+\beta}}\varepsilon^{\frac{1-\beta}{1+\beta}}}$, we have that $\varepsilon^{\frac{\beta}{1+\beta}}\delta^{-2}{d^{-\frac{\beta}{1+\beta}}}\geq{\varepsilon^{\frac{1}{1+\beta}}{\delta^{-3}}{d^{\frac{\beta}{1+\beta}}}}$. This completes our proof. 
\end{proof}
\section{Proof of Theorem \ref{thm: polynomial rate of convergence for Dirichlet problem on OD maps}}
\label{sec: proof of polynomial rate of convergence for Dirichlet problem on OD maps}
We briefly recall the setting we are working in. Suppose $\Omega\subset{\mathbb{R}^{2}}$ is a simply connected domain and $g:\mathbb{R}^{2}\rightarrow{\mathbb{R}}$ is $\alpha$-H\"older for some $\alpha\in{(0,1)}$. Let $h$ be the solution to the continuous Dirichlet problem on $\Omega$ with boundary data given by $g$:
\begin{align*}
\Delta{h}(x)&=0 \hspace{22pt}\text{for all $x\in{\Omega}$} \\ 
h(x)&=g(x) \hspace{10pt}\text{for all $x\in{\partial{\Omega}}$}
\end{align*}
Suppose $G=(V^{\bullet}\sqcup{V^{\circ}}, E)$ is an orthodiagonal map with edges of length at most $\varepsilon$ so that $\widehat{G}\subseteq{\Omega}$ and $\text{dist}(x,\partial{\Omega})\leq{\varepsilon}$ for all $x\in{\partial{V^{\bullet}}}$. Let $h^{\bullet}:V^{\bullet}\rightarrow{\mathbb{R}}$ be the solution to the discrete Dirichlet problem on $G^{\bullet}$ with boundary data given by $g$:
\begin{align*}
    \Delta^{\bullet}{h^{\bullet}}(x)&=0 \hspace{22pt}\text{for all $x\in{\text{Int}(V^{\bullet})}$} \\ 
h^{\bullet}(x)&=g(x) \hspace{10pt}\text{for all $x\in{\partial{V^{\bullet}}}$}
\end{align*}  
Fix $z_{0}\in{V^{\bullet}}$. This will be the point at which we compare the solutions to the continuous and discrete Dirichlet problems. For any $z\in{\Omega}$, let $d_{z}=\text{dist}(z,\partial{\Omega})$. As we discussed in Section \ref{sec: outline of main theorem proof}, we will now estimate the difference $|h(z_{0})-h^{\bullet}(z_{0})|$ in two different ways. One approach will give us a superior estimate when $z_{0}$ is close to $\partial{\Omega}$. The other will give a superior estimate when $z_{0}$ is far away from $\partial{\Omega}$.
\subsection{Case 1: $z_{0}$ is close to $\partial{\Omega}$}
\label{subsec: case 1}
Let $w$ be a point of $\partial{\Omega}$ so that $d=d_{z_{0}}=|z_{0}-w|$. By considering the point of intersection between $\partial{\widehat{G}}$ and the line segment from $z_{0}$ to $w$, it follows that we can find a point $w'\in{\partial{V^{\bullet}}}$ so that $|z_{0}-w'|\leq{d+\varepsilon}$ and $|w-w'|\leq{2\varepsilon}$. By the triangle inequality: 
\begin{equation*}
    |h(z_{0})-h^{\bullet}(z_{0})|\leq{|h(z_{0})-g(w)|+|g(w)-g(w')|+|g(w')-h^{\bullet}(z_{0})|}
\end{equation*}
Since $g$ is $\alpha$-H\"older:
\begin{equation*}
    |g(w)-g(w')|\leq{2^{\alpha}\|g\|_{\alpha}\varepsilon^{\alpha}}
\end{equation*}
To estimate $|h(z_{0})-g(w)|$, we write this quantity as an expectation. Using the layer-cake representation of this expectation along with the strong Beurling estimate (Proposition \ref{prop: cts Beurling estimate}), we have that: 
\begin{align*}
    |h(z_{0})-g(w)|&=|\mathbb{E}^{z_{0}}g(B_{T_{\partial{\Omega}}})-g(w)|\leq{\mathbb{E}^{z_{0}}|g(B_{T_{\partial{\Omega}}})-g(w)|}=\int_{0}^{\infty}\mathbb{P}^{z_{0}}(|g(B_{T_{\partial{\Omega}}})-g(w)|\geq{\lambda})d\lambda \\ 
    &\leq{\int_{0}^{\infty}\mathbb{P}^{z_{0}}(\|g\|_{\alpha}|B_{T_{\partial{\Omega}}}-w|^{\alpha}\geq{\lambda})d\lambda}=\alpha\|g\|_{\alpha}\int_{0}^{\text{diam}(\Omega)}u^{\alpha-1}\mathbb{P}^{z_{0}}(|B_{T_{\partial{\Omega}}}-w|\geq{u})du \\ 
    &\leq{\alpha\|g\|_{\alpha}\int_{0}^{d}u^{\alpha}du+\alpha\|g\|_{\alpha}\int_{d}^{\text{diam}(\Omega)}u^{\alpha-1}C_{1}\Big(\frac{d}{u}\Big)^{1/2}du}\\ 
    &=\|g\|_{\alpha}d^{\alpha}+C_{1}\alpha\|g\|_{\alpha}d^{1/2}\int_{d}^{\text{diam}(\Omega)}u^{\alpha-3/2}du
\end{align*}
where $C_{1}>0$ is an absolute constant, $(B_{t})_{t\geq{0}}$ is a standard 2D Brownian motion, and $T_{\partial{\Omega}}$ is the hitting time of $\partial{\Omega}$ by this Brownian motion. Observe that: 
\begin{equation*}
    \int_{d}^{\text{diam}(\Omega)}u^{\alpha-3/2}du\leq{\begin{cases}
        \frac{2d^{\alpha-1/2}}{(1-2\alpha)} & \text{if $\alpha\in{(0,1/2)}$} \\ \\ 
        \log\big(\frac{\text{diam}(\Omega)}{d}\big) & \text{if $\alpha=1/2$} \\ \\
        \frac{2\text{diam}(\Omega)^{\alpha-1/2}}{(2\alpha-1)} & \text{if $\alpha\in{(1/2,1]}$}
        
    \end{cases}}
\end{equation*}
Hence: 
\begin{equation*}
    |h(z_{0})-g(w)|\lesssim{\begin{cases}
        \frac{\alpha}{1-2\alpha}\|g\|_{\alpha}d^{\alpha}  & \text{if $\alpha\in{(0,1/2)}$} \\ \\
        \alpha\|g\|_{\alpha}d^{\alpha}\log\big(\frac{\text{diam}(\Omega)}{d}\big) & \text{if $\alpha=1/2$} \\ \\ 
        \frac{\alpha}{2\alpha-1}\|g\|_{\alpha}\text{diam}(\Omega)^{\alpha}\big(\frac{d}{\text{diam}(\Omega)}\big)^{1/2} & \text{if $\alpha\in{(1/2,1]}$}
        
    \end{cases}}
\end{equation*}
Using the weak Beurling estimate for simple random walks on orthodiagonal maps (Lemma \ref{lem: weak Beurling on OD maps}) in place of the strong Beurling estimate for planar Brownian motion, we can use the same argument to estimate $|g(w')-h^{\bullet}(z_{0})|$:
\begin{align*}
    |h^{\bullet}(z_{0})-g(w')|&=|\mathbb{E}^{z_{0}}g(S_{T_{\partial{V^{\bullet}}}})-g(w')|\leq{\mathbb{E}^{z_{0}}|g(S_{T_{\partial{V^{\bullet}}}})-g(w')|}=\int_{0}^{\infty}\mathbb{P}^{z_{0}}(|g(S_{T_{\partial{V^{\bullet}}}})-g(w')|\geq{\lambda})d\lambda \\
    &\leq{\int_{0}^{\infty}\mathbb{P}^{z_{0}}(\|g\|_{\alpha}|S_{T_{\partial{V^{\bullet}}}}-w'|^{\alpha}\geq{\lambda})d\lambda}=\alpha\|g\|_{\alpha}\int_{0}^{\text{diam}(\Omega)}u^{\alpha-1}\mathbb{P}^{z_{0}}(|S_{T_{\partial{V^{\bullet}}}}-w'|\geq{u})du \\
    &\leq{\alpha\|g\|_{\alpha}\int_{0}^{2(d\vee{\varepsilon})}u^{\alpha-1}du+\alpha\|g\|_{\alpha}\int_{2(d\vee{\varepsilon})}^{\text{diam}(\Omega)}u^{\alpha-1}}C_{2}\Big(\frac{d\vee{\varepsilon}}{u}\Big)^{\beta}du \\
    &\lesssim{\|g\|_{\alpha}(d\vee{\varepsilon})^{\alpha}+\alpha\|g\|_{\alpha}(d\vee{\varepsilon})^{\beta}\int_{2(d\vee{\varepsilon})}^{\text{diam}(\Omega)}u^{\alpha-\beta-1}du}
\end{align*}
where $C_{2}>0$ is an absolute constant, $(S_{n})_{n\geq{0}}$ is a simple random walk on $G^{\bullet}$, and $T_{\partial{V^{\bullet}}}$ is the hitting time of $\partial{V^{\bullet}}$ by this random walk. The appearance of ``$(d\vee{\varepsilon})$" in our estimates comes from the fact that $|z_{0}-w'|\leq{d+\varepsilon}\leq{2(d\vee{\varepsilon})}$. Observe that: 
\begin{equation*}
    \int_{2(d\vee{\varepsilon})}^{\text{diam}(\Omega)}u^{\alpha-\beta-1}du\lesssim{\begin{cases}
        \frac{(d\vee{\varepsilon})^{\alpha-\beta}}{\beta-\alpha} & \text{if $\alpha\in{(0,\beta)}$} \\ \\
        \log\big(\frac{\text{diam}(\Omega)}{d\vee{\varepsilon}}\big) & \text{if $\alpha=\beta$} \\ \\
        \frac{\text{diam}(\Omega)^{\alpha-\beta}}{\alpha-\beta} & \text{if $\alpha\in{(\beta,1]}$}
    \end{cases}}
\end{equation*}
Hence: 
\begin{equation*}
    |h^{\bullet}(z_{0})-g(w')|\lesssim{\begin{cases}
        \frac{\alpha}{\beta-\alpha}\|g\|_{\alpha}(d\vee{\varepsilon})^{\alpha}  & \text{if $\alpha\in{(0,\beta)}$} \\ \\
        \|g\|_{\alpha}(d\vee{\varepsilon})^{\alpha}\log\big(\frac{\text{diam}(\Omega)}{d\vee{\varepsilon}}\big) & \text{if $\alpha=\beta$} \\ \\
        \frac{\alpha}{\alpha-\beta}\|g\|_{\alpha}\text{diam}(\Omega)^{\alpha}\big(\frac{d\vee{\varepsilon}}{\text{diam}(\Omega)}\big)^{\beta} & \text{if $\alpha\in{(\beta,1]}$}
    \end{cases}}
\end{equation*}
Since we used a strictly weaker version of the Beurling estimate, our estimate for $|h^{\bullet}(z_{0})-h(w')|$ is necessarily worse than our estimate for $|h(z_{0})-g(w)|$. Hence, putting all this together, we have that:
\begin{equation*}
    |h(z_{0})-h^{\bullet}(z_{0})|\lesssim{\begin{cases}
        \frac{\beta}{\beta-\alpha}\|g\|_{\alpha}(d\vee{\varepsilon})^{\alpha}  & \text{if $\alpha\in{(0,\beta)}$} \\ \\
        \|g\|_{\alpha}(d\vee{\varepsilon})^{\alpha}\log\big(\frac{\text{diam}(\Omega)}{d\vee{\varepsilon}}\big) & \text{if $\alpha=\beta$} \\ \\
        \frac{\alpha}{\alpha-\beta}\|g\|_{\alpha}\text{diam}(\Omega)^{\alpha}\big(\frac{d\vee{\varepsilon}}{\text{diam}(\Omega)}\big)^{\beta} & \text{if $\alpha\in{(\beta,1]}$}
    \end{cases}}
\end{equation*}
\subsection{Case 2: $z_{0}$ is far away from $\partial{\Omega}$}
\label{subsec: case 2}
Let $\delta>0$ be some mesoscopic scale whose exact value we will fix later. If $\phi$ is a radially symmetric smooth mollifier supported on the unit ball $B(0,1)\subseteq{\mathbb{R}^{2}}$, then $\phi_{\delta}(x)=\delta^{-2}\phi(\delta^{-1}x)$ is a radially symmetric smooth mollifer, supported on $B(0,\delta)$. Extend $h^{\bullet}:V^{\bullet}\rightarrow{\mathbb{R}}$ to a function on $\widehat{G}$ in any sort of sensible way. For instance, we could triangulate the faces of $G^{\bullet}$ and define $h^{\bullet}$ on each triangle by linear interpolation. In this way, we can think of $h^{\bullet}$ as a function on $\widehat{G}$. This allows us to consider the convolution $\phi_{\delta}\ast{h^{\bullet}}$. Notice that this is only well-defined for points of $\widehat{G}$ that are at least $\delta$ far away from $\partial{\widehat{G}}$. With this in mind, let $\Omega^{\delta}$ be a simply connected domain so that:
\begin{itemize}
    \item $z_{0}\in{\Omega^{\delta}}$
    \item $\Omega^{\delta}\subseteq{\widehat{G}}\subseteq{\Omega}$
    \item every point of $\partial{\Omega^{\delta}}$ is at least $2\delta$ far away from $\partial{\widehat{G}}$.
    \item every point of $\partial{\Omega^{\delta}}$ lies within $O(\delta)$ of $\partial{\Omega}$ and therefore $\partial{\widehat{G}}$. 
\end{itemize}
In this way, $\phi_{\delta}\ast{h^{\bullet}}$ is a well-defined function on $\Omega^{\delta}$. Let $\widetilde{h}$ be the solution to the continuous Dirichlet problem on $\Omega^{\delta}$ with boundary data given by $\phi_{\delta}\ast{h^{\bullet}}$. That is: 
\begin{align*}
    \Delta{\widetilde{h}}(z)&=0 \hspace{59pt} \text{for all $z\in{\Omega^{\delta}}$} \\
    \widetilde{h}(z)&=(\phi_{\delta}\ast{h^{\bullet}})(z) \hspace{10pt} \text{for all $z\in{\partial{\Omega^{\delta}}}$}
\end{align*}
By the triangle inequality: 
\begin{equation}
\label{eqn: triangle inequality}
    |h^{\bullet}(z_{0})-h(z_{0})|\leq{|h^{\bullet}(z_{0})-(\phi_{\delta}\ast{h^{\bullet}})(z_{0})|+|(\phi_{\delta}\ast{h^{\bullet}})(z_{0})-\widetilde{h}(z_{0})|+|\widetilde{h}(z)-h(z)|}
\end{equation}
By Lemma \ref{lem: discrete harmonic functions are beta Holder in the bulk}: 
\begin{equation*}
    |h^{\bullet}(z_{0})-(\phi_{\delta}\ast{h^{\bullet}})(z_{0})|\lesssim{\|g\|\Big(\frac{\delta}{d}\Big)^{\beta}}
\end{equation*}
To handle the second term in Equation \ref{eqn: triangle inequality}, observe that $(\phi_{\delta}\ast{h^{\bullet}})$ is a smooth function on $\Omega^{\delta}$ that extends continuously to $\partial{\Omega^{\delta}}$ and $\widetilde{h}$ is the harmonic function on $\Omega^{\delta}$ that agrees with $\phi_{\delta}\ast{h^{\bullet}}$ on $\partial{\Omega^{\delta}}$. Hence:
\begin{equation}
\label{eqn: difference between mollified harmonic function and its harmonic extension}
    |(\phi_{\delta}\ast{h^{\bullet}})(z_{0})-\widetilde{h}(z_{0})|=\big|\int_{\Omega_{\delta}}(\Delta(\phi_{\delta}\ast{h^{\bullet}}))(w)G_{\Omega^{\delta}}(w,z_{0})dA(w)\big|
\end{equation}
Proposition \ref{prop: convolution has small Laplacian} tells us that the convolution of a discrete harmonic function with a smooth mollifier is almost harmonic. Namely, we have that: 
\begin{equation}
\label{eqn: Laplacian estimate}
    |(\Delta(\phi_{\delta}\ast{h^{\bullet}}))(w)|\lesssim{\|g\|(\varepsilon^{\frac{1}{2}}\delta^{-\frac{5}{2}}+\varepsilon^{\frac{\beta}{1+\beta}}\delta^{-2}d_{w}^{-\frac{\beta}{1+\beta}})}\lesssim{\|g\|(\varepsilon^{\frac{1}{2}}\delta^{-\frac{5}{2}}+\varepsilon^{\frac{\beta}{1+\beta}}\delta^{-2-\frac{\beta}{1+\beta}})}\lesssim{\|g\|\hspace{1pt}\varepsilon^{\frac{\beta}{1+\beta}}\delta^{-2-\frac{\beta}{1+\beta}}}
\end{equation}
where $d_{w}=\text{dist}(w,\partial{\widehat{G}})\asymp{\text{dist}(w,\partial{\Omega})}$. The second inequality in Equation \ref{eqn: Laplacian estimate} follows from the fact that we are only considering points $w\in{\Omega^{\delta}}$. Plugging our Laplacian estimate in Equation \ref{eqn: Laplacian estimate} into Equation \ref{eqn: difference between mollified harmonic function and its harmonic extension}, we have that:  
\begin{equation}
\label{eqn: estimate in terms of integral of the Green's function}
    |(\phi_{\delta}\ast{h^{\bullet}})(z_{0})-\widetilde{h}(z_{0})|\lesssim{\|g\|\hspace{1pt}\varepsilon^{\frac{\beta}{1+\beta}}\delta^{-2-\frac{\beta}{1+\beta}}\int_{\Omega^{\delta}}G_{\Omega^{\delta}}(w,z_{0})dA(w)} 
\end{equation}
Observe that the integral appearing on the RHS of the inequality above, can be interpreted probabilistically as the expected amount of time a planar Brownian motion, started at $z_{0}$, spends in $\Omega^{\delta}$ before hitting $\partial{\Omega^{\delta}}$. That is: 
\begin{equation*}
    \int_{\Omega^{\delta}}G_{\Omega^{\delta}}(w,z_{0})dA(w)=\mathbb{E}^{z_{0}}T_{\partial{\Omega^{\delta}}}\leq{\mathbb{E}^{z_{0}}T_{\partial{\Omega}}}
\end{equation*}
where $T_{\partial{\Omega}}$ and $T_{\partial{\Omega^{\delta}}}$ are the hitting times of $\partial{\Omega}$ and $\partial{\Omega^{\delta}}$ by our planar Brownian motion. Let $(B_{t})_{t\geq{0}}$ be a planar Brownian motion. Then the process $\big(|B_{t\wedge{T_{\partial{\Omega}}}}-z_{0}|^{2}-2t\wedge{T_{\partial{\Omega}}}\big)_{t\geq{0}}$ is a martingale. By the optional stopping theorem: 
\begin{equation*}
    \mathbb{E}^{z_{0}}|B_{T_{\partial{\Omega}}}-z_{0}|^{2}=2\hspace{1pt}\mathbb{E}^{z_{0}}T_{\partial{\Omega}}
\end{equation*}
Using the layer-cake representation of the expectation on the LHS along with the strong Beurling estimate (Proposition \ref{prop: cts Beurling estimate}), we have that: 
\begin{align*}
    \mathbb{E}^{z_{0}}|B_{T_{\partial{\Omega}}}-z_{0}|^{2}&=\int_{0}^{\infty}\mathbb{P}^{z_{0}}(|B_{T_{\partial{\Omega}}}-z_{0}|^{2}\geq{\lambda})d\lambda=2\int_{0}^{\infty}u\hspace{1pt}\mathbb{P}^{z_{0}}(|B_{T_{\partial{\Omega}}}-z_{0}|\geq{u})du \\
    &= 2\int_{0}^{d}u\hspace{1pt}du +  2\int_{d}^{\text{diam}(\Omega)}C\Big(\frac{d}{u}\Big)^{1/2}u\hspace{1pt}du\lesssim{d^{1/2}\text{diam}(\Omega)^{3/2}}
\end{align*}
where $C>0$ is an absolute constant. Plugging this estimate for $\mathbb{E}^{z_{0}}|B_{T_{\partial{\Omega}}}-z_{0}|^{2}=\mathbb{E}^{z_{0}}T_{\partial_{\Omega}}\geq{\mathbb{E}^{z_{0}}T_{\partial{\Omega^{\delta}}}}$ into Equation \ref{eqn: estimate in terms of integral of the Green's function}, we have that: 
\begin{equation*}
    |(\phi_{\delta}\ast{h^{\bullet}})(z_{0})-\widetilde{h}(z_{0})|\lesssim{\|g\|\hspace{1pt}\varepsilon^{\frac{\beta}{1+\beta}}\delta^{-2-\frac{\beta}{1+\beta}}d^{1/2}\text{diam}(\Omega)^{3/2}}
\end{equation*}
To estimate the third term in Equation \ref{eqn: triangle inequality} we use the maximum principle. Suppose $w\in{\partial{\Omega^{\delta}}}$. Then $\widetilde{h}(w)=(\phi_{\delta}\ast{h^{\bullet}})(w)$, since the boundary data of $\widetilde{h}$ on $\partial{\Omega^{\delta}}$ is given by $\phi_{\delta}\ast{h^{\bullet}}$. On the other hand, since $h$ is harmonic on $\Omega$, and the smooth mollifier $\phi_{\delta}$ is radially symmetric, $(\phi_{\delta}\ast{h})(w)=h(w)$. Hence: 
\begin{equation*}
    |h(w)-\widetilde{h}(w)|=|(\phi_{\delta}\ast{h})(w)-(\phi_{\delta}\ast{h^{\bullet}})(w)|\leq{\max_{w'\in{B(w,\delta)}}|h(w')-h^{\bullet}(w')|}
\end{equation*}
Since $\delta$ is small and the points $w'\in{B(w,\delta)}$ are $\delta$-close to the boundary of $\Omega$, by the same argument as in Case 1, we have that: 
\begin{equation*}
    |h(w)-\widetilde{h}(w)|\lesssim{\begin{cases}
        \frac{\beta}{\beta-\alpha}\|g\|_{\alpha}\delta^{\alpha}  & \text{if $\alpha\in{(0,\beta)}$} \\ \\
        \|g\|_{\alpha}\delta^{\alpha}\log\big(\frac{\text{diam}(\Omega)}{\delta}\big) & \text{if $\alpha=\beta$} \\ \\
        \frac{\alpha}{\alpha-\beta}\|g\|_{\alpha}\text{diam}(\Omega)^{\alpha}\big(\frac{\delta}{\text{diam}(\Omega)}\big)^{\beta} & \text{if $\alpha\in{(\beta,1]}$}
    \end{cases}}
\end{equation*}
Putting all this together, if $\alpha\in{(0,\beta)}$, we have that:  
\begin{equation*}
    |h(z_{0})-h^{\bullet}(z_{0})|=
        O(\|g\|_{\infty}\delta^{\beta}d^{-\beta})+O(\|g\|\hspace{1pt}\varepsilon^{\frac{\beta}{1+\beta}}\delta^{-2-\frac{\beta}{1+\beta}}d^{1/2}\text{diam}(\Omega)^{3/2})+O(\frac{\beta}{\beta-\alpha}\|g\|_{\alpha}\delta^{\alpha})
\end{equation*}
If $\alpha=\beta$:
\begin{equation*}
    |h(z_{0})-h^{\bullet}(z_{0})|=
        O(\|g\|_{\infty}\delta^{\beta}d^{-\beta})+O(\|g\|\hspace{1pt}\varepsilon^{\frac{\beta}{1+\beta}}\delta^{-2-\frac{\beta}{1+\beta}}d^{1/2}\text{diam}(\Omega)^{3/2})+O(\|g\|_{\alpha}\delta^{\alpha}\log\big(\frac{\text{diam}(\Omega)}{\delta}\big))
\end{equation*}
If $\alpha\in{(\beta,1)}$: 
\begin{equation*}
    |h(z_{0})-h^{\bullet}(z_{0})|=
        O(\|g\|_{\infty}\delta^{\beta}d^{-\beta})+O(\|g\|\hspace{1pt}\varepsilon^{\frac{\beta}{1+\beta}}\delta^{-2-\frac{\beta}{1+\beta}}d^{1/2}\text{diam}(\Omega)^{3/2})+O(\frac{\alpha}{\alpha-\beta}\|g\|_{\alpha}\text{diam}(\Omega)^{\alpha}\big(\frac{\delta}{\text{diam}(\Omega)}\big)^{\beta})
\end{equation*} 
\subsection{Choosing an Optimal $\delta$}
\label{subsec: optimizing delta}
In Section \ref{subsec: case 1}, we derived an estimate for $|h(z_{0})-h^{\bullet}(z_{0})|$ for $z_{0}$ close to the boundary. In Section \ref{subsec: case 2}, we derived an estimate for $|h(z_{0})-h^{\bullet}(z_{0})|$ for $z_{0}$ far away from the boundary. To complete our proof, we need to: 
\begin{enumerate}
    \item Find the optimal choice of $\delta$ for our estimate from Section \ref{subsec: case 2}.
    \item Combine these estimates to get an estimate that works for all $z_{0}\in{V^{\bullet}}$. 
\end{enumerate}
We will do this in detail for $\alpha\in{(0,\beta)}$. The corresponding estimates when $\alpha\in{[\beta,1]}$ follow by the same argument. Armed with the intuition that our rate of convergence should be polynomial in $\varepsilon$, we take $\delta=\varepsilon^{s}\hspace{1pt}\text{diam}(\Omega)^{1-s}$, $d=\varepsilon^{r}\hspace{1pt}\text{diam}(\Omega)^{1-r}$, where $0<r<s<1$. By our estimates from Sections \ref{subsec: case 1} and \ref{subsec: case 2} we have that: 
\begin{align*}
    |h(z_{0})-h^{\bullet}(z_{0})|\lesssim\Big(\|g\|+\frac{\beta}{\beta-\alpha}\|g\|_{\alpha}\text{diam}(\Omega)^{\alpha}\Big)\cdot\min\{&\varepsilon^{\alpha{r}}\text{diam}(\Omega)^{-\alpha{r}}, \varepsilon^{\beta(s-r)}\text{diam}(\Omega)^{-\beta(s-r)}+... \\ &...+\varepsilon^{\frac{\beta}{1+\beta}(1-s)-2s+\frac{r}{2}}\text{diam}(\Omega)^{-\frac{\beta}{1+\beta}(1-s)+2s-\frac{r}{2}}+... \\ &...+\varepsilon^{\alpha{s}}\text{diam}(\Omega)^{-\alpha{s}}\}
\end{align*}
for $\alpha\in{(0,\beta)}$. We want to find the value of $\delta$ that minimizes our error for a point which is distance $d$ from the boundary of $\Omega$. This amounts to finding a value of $s$ that minimizes our error for a fixed choice of $r$. In other words, we are interested in the maximum of the function: 
\begin{equation*}
    \Xi(\alpha,\beta,r,s)=
        \max\{\alpha{r}, \min\Big\{\beta(s-r),\frac{\beta}{1+\beta}-(2+\frac{\beta}{1+\beta})s+\frac{r}{2}, \alpha{s}\Big\}\}
\end{equation*}
in $s$, treating $\alpha$, $\beta$ and $r$ as constants: 
\begin{equation}
\label{eqn: maximum of Xi in s}
    \max_{s\in{(r,1)}}\Xi(\alpha,\beta,r,s)=\max\{\alpha{r},\min\Big\{\alpha\Big(\frac{\frac{r}{2}+\frac{\beta}{1+\beta}}{2+\alpha+\frac{\beta}{1+\beta}}\Big), \beta\Big(\Big(\frac{(\beta+\frac{1}{2})r+\frac{\beta}{1+\beta}}{2+\beta+\frac{\beta}{1+\beta}}\Big)-r\Big)\Big\}\}
\end{equation}
From here, we take the minimum of the resulting function, $\max\limits_{s\in{(r,1)}}\Xi(\alpha,\beta,r,s)$, in $r$, treating $\alpha$ and $\beta$ as constants. This corresponds to finding an estimate that works for all $d$. In this way, we conclude that:
\begin{equation*}
    |h(z_{0})-h^{\bullet}(z_{0})|\leq{\big(C_{1}\|g\| + C_{2}\frac{\beta}{\beta-\alpha}\|g\|_{\alpha}\text{diam}(\Omega)^{\alpha}\big)\left(\frac{\varepsilon}{\text{diam}(\Omega)}\right)^{\lambda(\alpha,\beta)}}
\end{equation*}
where: 
\begin{equation*}
    \lambda(\alpha,\beta)=\min_{r\in{[0,1]}}\max_{s\in{(r,1)}}\Xi(\alpha,\beta,r,s)
\end{equation*}
Our formula for the maximum of $\Xi(\alpha,\beta,r,s)$ in $s$ in Equation \ref{eqn: maximum of Xi in s} was already pretty cumbersome. While it is possible to write an explicit formula for $\lambda(\alpha,\beta)$, this formula is a mess so we will not do this here. Observe however that for any $\alpha\in{(0,\beta)}$ and $\beta\in{(0,1/2]}$, $\lambda(\alpha,\beta)>0$. To see this, notice that if we take $s=\frac{\beta}{4+6\beta}$ and $r\in{[\frac{\beta}{8+12\beta},1]}$, clearly: 
\begin{equation*}
    \Xi(\alpha,\beta,r,\frac{\beta}{4+6\beta})\geq{\frac{\beta\alpha}{8+12\beta}}
\end{equation*}
On the other hand, if $r\in{[0,\frac{\beta}{8+12\beta}]}$:
\begin{equation*}
    \Xi(\alpha,\beta,r,\frac{\beta}{4+6\beta})\geq{\min\{\beta\big(\frac{\beta}{4+6\beta}-r\big), \frac{\beta}{2+2\beta}+\frac{r}{2}, \frac{\beta\alpha}{4+6\beta}\}}\geq{\min\{\frac{\beta^{2}}{8+12\beta}, \frac{\beta}{2+\beta}, \frac{\alpha\beta}{4+6\beta}\}}
\end{equation*}
Hence: 
\begin{equation*}
    \lambda(\alpha,\beta)\geq{\frac{\alpha\beta}{8+12\beta}}
\end{equation*}
\qed

\section{Improving our Rate of Convergence}
\label{sec: improved rate of convergence for Dirichlet problem}

In the proof of Theorem \ref{thm: polynomial rate of convergence for Dirichlet problem on OD maps} we used the weak Harnack-type estimate, Lemma \ref{lem: discrete harmonic functions are beta Holder in the bulk}, twice. First to show that for a point $z_{0}\in{\Omega}$ mesoscopically far away from the boundary of $\Omega$, $\phi_{\delta}\ast{h^{\bullet}}(z_{0})$ is close to $h^{\bullet}(z_{0})$. And then when applying Proposition \ref{prop: convolution has small Laplacian}, since the proof of Proposition \ref{prop: convolution has small Laplacian} uses this lemma. All this takes place in Section \ref{subsec: case 2}. \\ \\
In Section \ref{sec: Lipschitz Regularity on a Mesoscopic Scale for Harmonic Functions on Orthodiagonal Maps}, we will prove that: 
\begin{enumerate}
    \item Discrete harmonic functions on orthodiagonal maps satisfy a strong Harnack-type estimate on a mesoscopic scale. (Theorem \ref{thm: optimal Lipschitz regularity on mesoscopic scales})
    \item Having a strong Harnack estimate on large scales implies that we have a refined weak Harnack estimate at any intermediate scale. (Proposition \ref{prop: refined Holder regularity on intermediate scales})
\end{enumerate} 
Since we always use Lemma \ref{lem: discrete harmonic functions are beta Holder in the bulk} on a mesoscopic scale in the proof of Theorem \ref{thm: polynomial rate of convergence for Dirichlet problem on OD maps}, replacing every application of Lemma \ref{lem: discrete harmonic functions are beta Holder in the bulk} by a combination of Theorem \ref{thm: optimal Lipschitz regularity on mesoscopic scales} and Proposition \ref{prop: refined Holder regularity on intermediate scales}, we can improve our rate of convergence in Theorem \ref{thm: polynomial rate of convergence for Dirichlet problem on OD maps}. That said, the formula for $\lambda(\alpha,\beta)$ (the exponent in our rate of convergence) that we get when we do this is even more complicated than the one in Theorem \ref{thm: polynomial rate of convergence for Dirichlet problem on OD maps}, so we won't bother writing this out.\\ \\
\noindent While we're unable to prove it at this time, since orthodiagonal maps are good approximations of continuous 2D space, we expect that discrete harmonic functions on orthodiagonal maps should satisfy a strong Harnack-type estimate of the kind that is known to hold in the continuum. That is, we expect that there exist absolute constants $C_{1}. C_{2}>0$ so that if $G=(V^{\bullet}\sqcup{V^{\circ}}, E)$ is an orthodiagonal map with edges of length at most $\varepsilon$ and $h:V^{\bullet}\rightarrow{\mathbb{R}}$ is harmonic, we have that: 
\begin{equation}
\label{eqn: strong Harnack on OD maps}
    |h(y)-h(x)|\leq{C_{1}\|h\|\Big(\frac{|x-y|}{d}\Big)}
\end{equation}
where $d=\text{dist}(x,\partial{\widehat{G}})\wedge{\text{dist}(y,\partial{\widehat{G}})}$ and $x$ and $y$ are points of $V^{\bullet}$ such that $|x-y|\geq{C_{2}\varepsilon}$. Assuming that such an estimate holds, we can replace every application of our weak Harnack estimate in the proof of Theorem \ref{thm: polynomial rate of convergence for Dirichlet problem on OD maps} with this strong Harnack estimate, thereby improving our rate of convergence:  
\begin{cor}
\label{cor: rate of convergence given weak Beurling and strong Harnack} 
Suppose $\Omega\subseteq{\mathbb{R}^{2}}$ is a bounded simply connected domain, $g$ is a $\alpha$-H\"older on $\mathbb{R}^{2}$ and $G=(V^{\bullet}\sqcup{V^{\circ}},E)$ is an orthodiagonal map with edges of length at most $\varepsilon$ belonging to a subcollection of orthodiagonal maps on which a strong Harnack-type estimate holds (see Equation \ref{eqn: strong Harnack on OD maps}). Suppose also that for each point $z\in{\partial{\widehat{V}}}$, $\text{dist}(z,\partial{\Omega})\leq{\varepsilon}$. Let $h$ be the solution to the continuous Dirichlet problem on $\Omega$ with boundary data given by $g$ and let $h^{\bullet}$ be the solution to the discrete Dirichlet problem on $G^{\bullet}$ with boundary data given by $g$, where $g$ is $\alpha$-H\"older for some $\alpha\in{(0,1]}$.  If $\beta\in{(0,1)}$ is the absolute constant from Lemma \ref{lem: weak Beurling on OD maps}, for any $v\in{V^{\bullet}}$, we have that: 
    \begin{equation*}
        |h^{\bullet}(v)-h(v)|\leq{\begin{cases}
            \big(C_{1}\|g\| + C_{2}\frac{\beta}{\beta-\alpha}\|g\|_{\alpha}\text{diam}(\Omega)^{\alpha}\big)\big(\frac{\varepsilon}{\text{diam}(\Omega)}\big)^{\theta(\alpha,\beta)} & \text{if $\alpha\in{(0,\beta)}$} \\ \\
            \big(C_{1}\|g\| + C_{2}\|g\|_{\alpha}\text{diam}(\Omega)^{\alpha}\big)\big(\log\big(\frac{\text{diam}(\Omega)}{\varepsilon}\big)\big)\big(\frac{\varepsilon}{\text{diam}(\Omega)}\big)^{\theta(\alpha,\beta)} & \text{if $\alpha=\beta$} \\ \\
            \big(C_{1}\|g\| + C_{2}\frac{\alpha}{\beta-\alpha}\|g\|_{\alpha}\text{diam}(\Omega)^{\alpha}\big)\big(\frac{\varepsilon}{\text{diam}(\Omega)}\big)^{\theta(\alpha,\beta)} & \text{if $\alpha\in{(\beta,1)}$}
        \end{cases}}
    \end{equation*}
    where $C_{1},C_{2}>0$ are absolute constants, $\|g\|_{\alpha}$ is the $\alpha$-H\"older norm of $g$, and the function $\theta$ is given by: 
    \begin{align*}
        \theta(\alpha,\beta)&=\min_{r\in{[0,1]}}\max_{s\in{(r,1)}}\big(\max\{(\alpha\wedge{\beta})r, \min\{s-r, \frac{1}{2}-\frac{5}{2}s+\frac{r}{2}, (\alpha\wedge{\beta})s\}\}\big) \\ 
        &=\min\Big\{\frac{\alpha\wedge{\beta}}{5+2(\alpha\wedge{\beta})}, \frac{\alpha\wedge{\beta}}{4+7(\alpha\wedge{\beta})}\Big\}
    \end{align*}
\end{cor}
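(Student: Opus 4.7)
The plan is to follow the proof of Theorem \ref{thm: polynomial rate of convergence for Dirichlet problem on OD maps} step by step, replacing each invocation of the weak Harnack estimate in Lemma \ref{lem: discrete harmonic functions are beta Holder in the bulk} with the hypothesized strong Harnack estimate from Equation \ref{eqn: strong Harnack on OD maps}. Case 1 of that proof (Section \ref{subsec: case 1}) uses only the weak Beurling estimate, so the near-boundary bound and the outer term $(\alpha \wedge \beta) r$ in the definition of the exponent function remain unchanged, as does the three-case dependence on whether $\alpha$ is less than, equal to, or greater than $\beta$. All improvements therefore come from Case 2.

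The first place where strong Harnack enters is in the proof of Proposition \ref{prop: convolution has small Laplacian}. There, the estimate $|h^{\bullet}(w) - h^{\bullet}(w_S)| = O(\|h^{\bullet}\|(\ell/d)^{\beta})$ upgrades to $O(\|h^{\bullet}\|\ell/d)$, so the analogue of Equation \ref{eqn: Laplacian estimate before optimizing in ell} becomes
\begin{equation*}
    \Delta(\varphi_\delta \ast h^{\bullet})(z) = O(\varepsilon \ell^{-1} \delta^{-2} \|h^{\bullet}\|) + O(\ell \delta^{-2} d^{-1} \|h^{\bullet}\|) + O(\ell \delta^{-3} \|h^{\bullet}\|),
\end{equation*}
times factors of $\|D^{2}\varphi\| + \|D^{3}\varphi\|$. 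Since $\delta \leq d/2$, the middle term is absorbed by the last, and optimizing in $\ell$ gives $\ell \asymp \sqrt{\varepsilon\delta}$ with refined Laplacian bound $O(\|h^{\bullet}\|\varepsilon^{1/2}\delta^{-5/2})$.

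The only other place the Harnack estimate is used in Section \ref{subsec: case 2} is to bound $|h^{\bullet}(z_0) - (\phi_\delta \ast h^{\bullet})(z_0)|$; the strong Harnack yields $\lesssim \|g\|\delta/d$. Feeding the improved Laplacian bound through the unchanged Green's function and Brownian motion computation produces a bound of order $\|g\|\varepsilon^{1/2}\delta^{-5/2}d^{1/2}\text{diam}(\Omega)^{3/2}$ for the middle term in Equation \ref{eqn: triangle inequality}, while the boundary-matching third term is untouched. Setting $\delta = \varepsilon^{s}\text{diam}(\Omega)^{1-s}$ and $d = \varepsilon^{r}\text{diam}(\Omega)^{1-r}$ as in Section \ref{subsec: optimizing delta}, these three contributions give exactly the three arguments $s-r$, $\tfrac{1}{2} - \tfrac{5s}{2} + \tfrac{r}{2}$, and $(\alpha \wedge \beta) s$ of the inner min defining $\theta$, with the outer term $(\alpha \wedge \beta)r$ inherited from Case 1.

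Finally, the closed form of $\theta$ is extracted by an explicit min-max computation. Writing $\gamma = \alpha \wedge \beta$ and fixing $r$, the inner min in $s$ is piecewise linear and peaks where two of the three competing quantities coincide: balancing the second and third gives $s = (1+r)/(5+2\gamma)$ with value $\gamma(1+r)/(5+2\gamma)$, while balancing the first and second gives $s = (1+3r)/7$ with value $(1-4r)/7$, with a transition at $r_\ast = (1-\gamma)/(4+3\gamma)$. For $r \in [0, r_\ast]$ the outer term $\gamma r$ is dominated and the $r$-minimum is attained at $r=0$ with value $\gamma/(5+2\gamma)$; for $r \in [r_\ast, 1/4]$ the quantity $\max\{\gamma r, (1-4r)/7\}$ is minimized at $r = 1/(4+7\gamma)$, where both pieces equal $\gamma/(4+7\gamma)$. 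Taking the smaller of the two candidate minima yields $\theta(\alpha,\beta) = \min\{\gamma/(5+2\gamma), \gamma/(4+7\gamma)\}$. The main obstacle is organizational rather than conceptual: one must carefully track which pair of piecewise branches is binding at each stage of the min-max, and verify the transition inequalities $r_\ast < 1/(4+7\gamma) < 1/4$ so that each critical point actually lies in the correct sub-interval.
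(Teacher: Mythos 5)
Your proposal is correct and follows exactly the route the paper intends: the paper does not write out a separate proof of this corollary but simply instructs the reader to rerun the argument of Theorem \ref{thm: polynomial rate of convergence for Dirichlet problem on OD maps}, replacing each invocation of Lemma \ref{lem: discrete harmonic functions are beta Holder in the bulk} with the strong Harnack estimate of Equation \ref{eqn: strong Harnack on OD maps}, and you carry this out faithfully. In particular, you correctly identify the two places where Lemma \ref{lem: discrete harmonic functions are beta Holder in the bulk} enters Section \ref{subsec: case 2} (inside Proposition \ref{prop: convolution has small Laplacian}, and in the bound on $|h^{\bullet}(z_0) - (\phi_\delta\ast h^{\bullet})(z_0)|$), correctly observe that the upgraded term $O(\ell\delta^{-2}d^{-1})$ is dominated by $O(\ell\delta^{-3})$ because every $w\in\Omega^\delta$ has $d_w\geq 2\delta$ by construction, arrive at the Laplacian bound $O(\varepsilon^{1/2}\delta^{-5/2}\|h^\bullet\|)$ after optimizing $\ell\asymp\sqrt{\varepsilon\delta}$, and then work out the piecewise-linear min-max that the paper states but does not derive, with the correct crossing points $s=(1+r)/(5+2\gamma)$, $s=(1+3r)/7$, transition $r_\ast=(1-\gamma)/(4+3\gamma)$, and optimum $r=1/(4+7\gamma)$, yielding $\theta=\min\{\gamma/(5+2\gamma),\gamma/(4+7\gamma)\}$ with $\gamma=\alpha\wedge\beta$.
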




\section{Lipschitz Regularity on a Mesoscopic Scale for Harmonic Functions on Orthodiagonal Maps}
\label{sec: Lipschitz Regularity on a Mesoscopic Scale for Harmonic Functions on Orthodiagonal Maps}

As we alluded to in Section \ref{sec: Lipschitz Regularity on a Mesoscopic Scale for Harmonic Functions on Orthodiagonal Maps}, in this Section, we will prove the following Harnack-type estimate for discrete harmonic functions on orthodiagonal maps: 
\begin{thm}
\label{thm: optimal Lipschitz regularity on mesoscopic scales}
    If $\beta\in{(0,\frac{1}{2})}$ is the absolute constant from Lemma \ref{lem: weak Beurling on OD maps}, for any $\alpha\in{(0,\frac{\beta}{1+3\beta})}$, we have an absolute constant $C_{\alpha}>0$ so that if $G=(V^{\bullet}\sqcup{V^{\circ}},E)$ is an orthodiagonal map with edges of length at most $\varepsilon$, $h:V^{\bullet}\rightarrow{\mathbb{R}}$ is harmonic, and $z,w\in{V^{\bullet}}$ satisfy $|z-w|\geq{d\big({\small \frac{\varepsilon}{d}}\big)^{\alpha}}$, where $d=\text{dist}(z,\partial{\widehat{G}})\wedge{\text{dist}(w,\partial{\widehat{G}})}$, then:
    \begin{equation*}
        |h(z)-h(w)|\leq{C_{\alpha}\|h\|_{\infty}\Big(\frac{|z-w|}{d}\Big)}
    \end{equation*}
\end{thm}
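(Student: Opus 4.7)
The plan is to approximate $h$ by its convolution with a smooth radially symmetric mollifier and then apply standard elliptic PDE tools. Fix $z, w \in V^\bullet$ satisfying the hypothesis, write $t := |z-w|$, pick a smooth radially symmetric mollifier $\varphi$ supported on $B(0,1)$, and for a scale $\delta \in (\varepsilon, d/2)$ to be optimized, set $\tilde h := \varphi_\delta \ast h$, where $h$ has first been extended to $\widehat G$ by piecewise-linear interpolation over a triangulation of the faces of $G^\bullet$. Two facts drive the argument. First, by Lemma \ref{lem: discrete harmonic functions are beta Holder in the bulk}, the oscillation of $h$ over any ball $B(x, \delta)$ with $x$ well inside $\widehat G$ is $O(\|h\|_\infty (\delta/d)^\beta)$, whence $|\tilde h(x) - h(x)| \lesssim \|h\|_\infty (\delta/d)^\beta$ for $x$ at distance $\gtrsim d$ from $\partial\widehat G$. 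Second, by Proposition \ref{prop: convolution has small Laplacian}, $\|\Delta \tilde h\|_\infty$ is quantitatively small, with an estimate that splits into two regimes at the threshold $\varepsilon^{(1-\beta)/(1+\beta)} d^{2\beta/(1+\beta)}$.

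On the ball $B := B(z, d/2) \subset \widehat G$, decompose $\tilde h = u + v$, where $u$ is the harmonic extension of $\tilde h|_{\partial B}$ and $v(x) := -\int_B G_B(x, y)\,\Delta\tilde h(y)\,dA(y)$ solves the Poisson problem with zero boundary data and source $\Delta\tilde h$. The classical interior gradient estimate for harmonic functions yields $|u(z) - u(w)| \lesssim (t/d)\,\|h\|_\infty$, while the Green's function bound $\|v\|_\infty \lesssim d^2 \|\Delta\tilde h\|_\infty$ implies $|v(z) - v(w)| \lesssim d^2 \|\Delta \tilde h\|_\infty$. Combining these with the bound on $|h - \tilde h|$ at the points $z$ and $w$ gives
\begin{equation*}
|h(z) - h(w)| \lesssim \|h\|_\infty\Bigl[(\delta/d)^\beta + t/d\Bigr] + d^2 \|\Delta \tilde h\|_\infty.
\end{equation*}

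The remaining task is to choose $\delta$ so that the first and third error terms above are dominated by $\|h\|_\infty \cdot t/d$. The weak-Harnack term imposes the upper bound $\delta \lesssim d(t/d)^{1/\beta}$, while demanding $d^2 \|\Delta \tilde h\|_\infty \lesssim \|h\|_\infty (t/d)$ combined with Proposition \ref{prop: convolution has small Laplacian} imposes a lower bound on $\delta$ whose form depends on whether the chosen $\delta$ falls above or below the regime threshold. The main obstacle is this final optimization: determining, as a function of $\beta$, $\varepsilon/d$, and $t/d$, which regime of Proposition \ref{prop: convolution has small Laplacian} applies to the optimal $\delta$, and verifying that the weak-Harnack upper bound and the Laplacian-induced lower bound on $\delta$ are simultaneously satisfiable. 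Carrying out this optimization carefully — tracking the interplay between the two regimes and using the hypothesis $t \geq d(\varepsilon/d)^\alpha$ to control $(\varepsilon/d)^{1/\beta}$-powers against $(t/d)^{1/\beta}$-powers — yields the admissibility threshold $\alpha < \beta/(1+3\beta)$, with a constant $C_\alpha$ that blows up as $\alpha$ approaches this threshold.
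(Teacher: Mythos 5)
Your construction of $\tilde h = \varphi_\delta\ast h$ and the decomposition $\tilde h = u + v$ on $B(z,d/2)$ correctly reproduce the mechanism behind the paper's \emph{base case} (Proposition \ref{prop: Lipschitz regularity on a mesoscopic scale (base case)}), and Proposition \ref{prop: almost harmonic functions satisfy a Harnack type estimate} is the paper's version of your $u+v$ split (the paper gives the sharper bound $\|\Delta\tilde h\|_\infty\,d\,|z-w|$ in place of your $\|\Delta\tilde h\|_\infty\,d^2$, gaining a factor $|z-w|/d$). However, the final claim — that a single-shot optimization over $\delta$ yields admissibility for all $\alpha < \frac{\beta}{1+3\beta}$ — is not correct, and this is where the argument breaks.

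Running the optimization you describe (even with the paper's sharper Green's-function term), the constraint $(\delta/d)^\beta \lesssim |z-w|/d$ gives the upper bound $\delta \lesssim d(|z-w|/d)^{1/\beta}$, while forcing the Laplacian term from Proposition \ref{prop: convolution has small Laplacian} to be $\lesssim (|z-w|/d)\|h\|_\infty$ gives the lower bound $\delta \gtrsim d(\varepsilon/d)^{\beta/(2(1+\beta))}$ (the second regime of that proposition is the binding one when $\beta < 1/2$). Matching these two constraints requires $|z-w| \geq d(\varepsilon/d)^{\beta^2/(2(1+\beta))}$, i.e.\ only $\alpha < \frac{\beta^2}{2(1+\beta)}$. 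Since $\frac{\beta^2}{2(1+\beta)} < \frac{\beta}{1+3\beta}$ for all $\beta\in(0,1/2)$ (for small $\beta$ the former is $\approx \beta^2/2$ versus $\approx\beta$), there is no choice of $\delta$ and no regime-bookkeeping that lets the single mollification pass cross this gap; the hypothesis $|z-w|\geq d(\varepsilon/d)^\alpha$ with $\alpha$ close to $\frac{\beta}{1+3\beta}$ simply does not supply a consistent window for $\delta$.

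What is missing is the bootstrap. The paper first uses the base-case Lipschitz regularity at scale $d(\varepsilon/d)^{\alpha_0}$ together with the weak Beurling estimate to \emph{improve} the H\"older exponent of $h$ at intermediate scales (Proposition \ref{prop: refined Holder regularity on intermediate scales}: $\beta$ becomes $\beta + \frac{\alpha_0}{\gamma}(1-\beta)$). This improved intermediate-scale H\"older regularity then feeds back into both the estimate on $|h-\varphi_\delta\ast h|$ and the Laplacian bound of Proposition \ref{prop: convolution has small Laplacian} (whose $\frac{\beta}{1+\beta}$ exponent is precisely tracking the H\"older regularity available at the intermediate scale $\ell$), yielding Lipschitz regularity at a strictly smaller mesoscopic scale $d(\varepsilon/d)^{\alpha_1}$ (Proposition \ref{prop: the bootstrap}). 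Iterating produces the recursion $\alpha_{n+1} = (1-\beta)\alpha_n + \beta\min\{\tfrac{1}{5},\tfrac{\beta+\alpha_n-\beta\alpha_n}{2(1+\beta)}\}$, and $\alpha_n \uparrow \frac{\beta}{1+3\beta}$. Without this self-improving loop, your argument only establishes Proposition \ref{prop: Lipschitz regularity on a mesoscopic scale (base case)}, not Theorem \ref{thm: optimal Lipschitz regularity on mesoscopic scales}.
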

\noindent We say that this estimate holds on a mesoscopic scale because it requires that the points $z,w\in{V^{\bullet}}$ we're looking at are at least $d\big(\frac{\varepsilon}{d}\big)^{\alpha}$ apart, where $\alpha\in{(0,1)}$, implying that $\varepsilon\ll{d\big(\frac{\varepsilon}{d}\big)^{\alpha}}\ll{1}$. One interpretation of this result is that it tells us that discrete harmonic functions on orthodiagonal maps are Lipschitz in the bulk on a mesoscopic scale. We do not believe this result is sharp. Namely, since orthodiagonal maps are good approximations of continuous 2D space, the Harnack estimate should hold even on a microscopic scale. That is, for any pair of points $z,w\in{V^{\bullet}}$ that are at least $C\varepsilon$ apart, for some absolute constant $C>0$. As we remarked in Section \ref{sec: Lipschitz Regularity on a Mesoscopic Scale for Harmonic Functions on Orthodiagonal Maps}, this is known to be true for any isoradial graph. This includes subsets of the triangular, the hexagonal, and the square grid. Furthermore, in \cite{CLR23}, Chelkak, Laslier and Russkikh show that we have a Harnack estimate on microscopic scales for discrete harmonic functions on t-embeddings satisfying the assumptions ``$Lip(\kappa,\delta)$" and ``$Exp\text{-}Fat(\delta)$" for some $\kappa\in{(0,1)}$, $\delta>0$ (see Corollary 6.18 of \cite{CLR23}). For a precise definition of the assumptions ``$Lip(\kappa,\delta)$" and ``$Exp\text{-}Fat(\delta)$," see Section 1.2 of \cite{CLR23}. As we discussed in Section \ref{subsec: outline of proof near boundary}, for any $\kappa\in{(0,1)}$, there exists $c=c(\kappa)>0$ so that any orthodiagonal map of edge length at most $\varepsilon$, satisfies the assumption ``$Lip(c\hspace{1pt}\varepsilon, \varepsilon)$." In contrast, it is not known whether an arbitrary orthodiagonal map satisfies the condition ``$Exp\text{-}Fat(\delta)$" for some $\delta>0$ that only depends on the mesh of our orthodiagonal map. Thus, we do not have a Harnack estimate on microscopic scales for discrete harmonic functions on orthodiagonal maps as an immediate consequence of Corollary 6.18 of \cite{CLR23}.

\subsection{Lipschitz Regularity on a Mesoscopic Scale (Base Case)}
\label{subsec: Lipschitz Regularity on a Mesoscopic Scale (Base Case)}

The key idea behind the proof of Theorem \ref{thm: optimal Lipschitz regularity on mesoscopic scales} is the following regularity estimate for $C^{2}$ functions in terms of their norm and Laplacian: 
\begin{prop}
\label{prop: almost harmonic functions satisfy a Harnack type estimate}
     Suppose $\Omega$ is a simply connected domain and $h\in{C^{2}_{b}(\Omega)\cap{C(\overline{\Omega})}}$. Then: 
    \begin{equation}
        |h(x_{2})-h(x_{1})|\lesssim{\|h\|_{\infty}\Big(\frac{|x_{2}-x_{1}|}{d}\Big)+\|\Delta{h}\|_{\infty}d|x_{2}-x_{1}|}
    \end{equation}
     for any $x_{1},x_{2}\in{\Omega}$, where $d=d_{x_{1}}\wedge{d_{x_{2}}}=\text{dist}(x_{1},\partial{\Omega})\wedge{\text{dist}(x_{2},\partial{\Omega})}$.
\end{prop}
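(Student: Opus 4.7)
The idea is to localize to a disk of radius $d$ about $x_1$ and decompose $h$ into a harmonic piece, to which the classical Harnack estimate \eqref{eqn: Harnack estimate for continuous harmonic functions} applies, plus a remainder whose size and gradient are controlled by $\|\Delta h\|_\infty$. If $|x_1 - x_2| \geq d/2$ the claimed inequality is immediate from $|h(x_1) - h(x_2)| \leq 2\|h\|_\infty$, so I may assume $|x_1 - x_2| < d/2$.

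Set $B := B(x_1, d)$, which is contained in $\Omega$ because $d \leq d_{x_1}$. Let $h_{\mathrm{harm}}$ be the solution of the continuous Dirichlet problem on $B$ with boundary data $h|_{\partial B}$, and put $u := h - h_{\mathrm{harm}}$, so that $\Delta u = \Delta h$ on $B$ and $u \equiv 0$ on $\partial B$. By the maximum principle $\|h_{\mathrm{harm}}\|_{L^\infty(B)} \leq \|h\|_\infty$, and both $x_1$ and $x_2$ lie at distance at least $d/2$ from $\partial B$, so applying \eqref{eqn: Harnack estimate for continuous harmonic functions} to $h_{\mathrm{harm}}$ on $B$ gives
\begin{equation*}
|h_{\mathrm{harm}}(x_1) - h_{\mathrm{harm}}(x_2)| \lesssim \|h\|_\infty \cdot \frac{|x_1 - x_2|}{d}.
\end{equation*}

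To control $u$, I would first compare with the explicit radial solution $u_0(z) = (d^2 - |z - x_1|^2)/4$ of $\Delta u_0 = -1$ in $B$ with $u_0 = 0$ on $\partial B$: applying the maximum principle to $u \pm \|\Delta h\|_\infty u_0$ yields $\|u\|_{L^\infty(B)} \lesssim \|\Delta h\|_\infty \cdot d^2$. For any point $y$ on the segment $[x_1, x_2] \subset B(x_1, d/2)$ one has $B(y, d/2) \subset B$, so the standard interior gradient estimate for the Poisson equation $\Delta u = \Delta h$ (e.g.\ Theorem 3.9 of \cite{GT01}) gives
\begin{equation*}
|\nabla u(y)| \lesssim \frac{1}{d}\|u\|_{L^\infty(B)} + d\|\Delta h\|_\infty \lesssim d\|\Delta h\|_\infty.
\end{equation*}
Integrating this bound along the segment yields $|u(x_1) - u(x_2)| \lesssim \|\Delta h\|_\infty \cdot d \cdot |x_1 - x_2|$, and the triangle inequality with the estimate for $h_{\mathrm{harm}}$ finishes the proof.

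The only real piece of machinery is the interior gradient bound on $u$, which is the main thing to get right. If one prefers a fully self-contained argument that avoids citing a black-box elliptic estimate, one can instead write $u(x) = -\int_B G_B(y,x)\Delta h(y)\,dA(y)$, where $G_B$ is the Dirichlet Green function on the disk, and verify by a direct computation using its explicit formula that $\int_B |\nabla_x G_B(y,x)|\,dA(y) \lesssim d$ uniformly for $x \in B(x_1, d/2)$; this yields the same conclusion. No other step appears delicate.
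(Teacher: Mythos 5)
Your proof is correct, but it takes a genuinely different route from the paper's. The paper uses Green's identity twice to produce a single representation formula
\begin{equation*}
h(x)=\frac{1}{\pi{R}^{2}}\int_{B(x,R)}h\,dA+\frac{1}{4\pi{R^{2}}}\int_{B(x,R)}\Delta{h}(y)(R^{2}-|y-x|^{2})\,dA(y)-\frac{1}{2\pi}\int_{B(x,R)}\Delta{h}(y)\log\Big(\frac{|y-x|}{R}\Big)dA(y),
\end{equation*}
then sets $R=d$, subtracts the two instances at $x_1$ and $x_2$, and estimates the three resulting differences directly (the mean-value difference over the symmetric difference of the two balls, and the two $\Delta h$-kernel differences). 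You instead decompose $h$ on a single disk $B(x_1,d)$ into a harmonic part, to which you apply the classical Harnack estimate (Equation \ref{eqn: Harnack estimate for continuous harmonic functions}), plus a Newtonian-potential part $u$ with zero boundary data, whose $L^\infty$ norm you bound by a comparison function and whose gradient you bound by an interior gradient estimate. Both arguments work; the paper's is fully self-contained and elementary (no appeal to elliptic regularity machinery), while yours is perhaps more conceptual but leans on a black-box gradient estimate. One small caveat: Theorem 3.9 of \cite{GT01} is stated for $f\in C^\alpha$, while here $\Delta h$ is only assumed bounded and continuous; the estimate you invoke does hold under this weaker hypothesis, but the cleanest justification is exactly the direct Green's-function computation you propose at the end as an alternative, so that remark is not optional padding but is actually what closes the argument without a citation mismatch.
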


\begin{proof}
    Suppose $B(x,R)\subseteq{\Omega}$ is a ball contained in $\Omega$. By Green's identity applied to $h$ and $G_{B(x,R)}(y,x)=-\frac{1}{2\pi}\log\Big(\frac{|y-x|}{R}\Big)$, we have that:
\begin{equation*}
    h(x)=\frac{1}{2\pi{R}}\int_{\partial{B(x,R)}}h(y)d\sigma(y)-\frac{1}{2\pi}\int_{B(x,R)}\Delta{h}(y)\log\Big(\frac{|y-x|}{R}\Big)dA(y)
\end{equation*}
where ``dA(y)" denotes integration with respect to area in $\Omega$. Similarly, applying Green's identity with $h$ and $R^{2}-|y-x|^{2}$, we have that:
\begin{equation*}
    \frac{1}{2\pi{R}}\int_{\partial{B(x,R)}}h(y)d\sigma(y)=\frac{1}{\pi{R}^{2}}\int_{B(x,R)}h(y)dA(y)+\frac{1}{4\pi{R^{2}}}\int_{B(x,R)}\Delta{h}(y)\big(R^{2}-|y-x|^{2}\big)dA(y)
\end{equation*}
Putting all this together, we have that: 
\begin{equation*}
    h(x)=\frac{1}{\pi{R}^{2}}\int_{B(x,R)}h(y)dA(y)+\frac{1}{4\pi{R^{2}}}\int_{B(x,R)}\Delta{h}(y)\big(R^{2}-|y-x|^{2}\big)dA(y)-\frac{1}{2\pi}\int_{B(x,R)}\Delta{h}(y)\log\Big(\frac{|y-x|}{R}\Big)dA(y) 
\end{equation*}
Suppose $x_{1}, x_{2}$ are points of $\Omega$. Observe that if $|x_{2}-x_{1}|>\frac{d}{2}$, then trivially, $|h(x_{2})-h(x_{1})|\leq{4\|h\|_{\infty}\big(\frac{|x_{2}-x_{1}|}{d}\big)}$ and so the desired result follows. With this in mind, suppose that $|x_{2}-x_{1}|\leq{\frac{d}{2}}$. Then: 
\begin{equation*}
    h(x_{2})-h(x_{1})=(1)+(2)+(3)
\end{equation*}
where: 
\begin{align*}
    |(1)|=&\Big|\frac{1}{\pi{d^{2}}}\int_{B(x_{1},d)\Delta{B(x_{2},d)}}h(y)dA(y)\Big|\leq{\frac{5\pi|x_{2}-x_{1}|d}{\pi{d}^{2}}\|h\|_{\infty}}=5\|h\|_{\infty}\Big(\frac{|x_{2}-x_{1}|}{d}\Big) \\
    \vspace{5pt}
    |(2)|=&\frac{1}{4\pi}\Big|\int_{B(x_{1},d)\cap{B(x_{2},d)}}\Delta{h}(y)\Big(\frac{|x_{2}-y|^{2}-|x_{1}-y|^{2}}{d^{2}}\Big)dA(y)+\int_{B(x_{2},d)\setminus{B(x_{1},d)}}\Delta{h}(y)\Big(1-\frac{|x_{2}-y|^{2}}{d^{2}}\Big)dA(y) \\ &-\int_{B(x_{1},d)\setminus{B(x_{2},d)}}\Delta{h}(y)\Big(1-\frac{|x_{1}-y|^{2}}{d^{2}}\Big)dA(y)\Big| \\ 
    \leq&{\frac{15\|\Delta{h}\|_{\infty}d|x_{2}-x_{1}|}{8}} \\
    \vspace{5pt}
    |(3)|=&\frac{1}{2\pi}\Big|\int_{B(x_{1},d)\cap{B(x_{2},d)}}\Delta{h}(y)\log\Big(\frac{|y-x_{2}|}{|y-x_{1}|}\Big)dA(y)+\int_{B(x_{2},d)\setminus{B(x_{1},d)}}\Delta{h}(y)\log\Big(\frac{|x_{2}-y|}{d}\Big)dA(y) \\ &-\int_{B(x_{1},d)\setminus{B(x_{2},d)}}\Delta{h}(y)\log\Big(\frac{|x_{1}-y|}{d}\Big)dA(y)\Big| \\
    \leq&\frac{\|\Delta{h}\|_{\infty}}{2\pi}\Big(\int_{B(x_{2},d)}\log\Big(1+\frac{|x_{2}-x_{1}|}{|y-x_{2}|}\Big)dA(y)+\int_{B(x_{2},d)\setminus{B(x_{2},d-|x_{2}-x_{1}|)}}\log\Big(\frac{d}{|x_{2}-y|}\Big)dA(y) \\
    &+\int_{B(x_{1},d)\setminus{B(x_{1},d-|x_{2}-x_{1}|)}}\log\Big(\frac{d}{|x_{1}-y|}\Big)dA(y)\Big) \\
    =&\|\Delta{h}\|_{\infty}\Big(\int_{0}^{d}r\log\Big(1+\frac{|x_{1}-x_{2}|}{r}\Big)dr+2\int^{d}_{d-|x_{2}-x_{1}|}r\log\Big(\frac{d}{r}\Big)dr\Big)\lesssim{\|\Delta{h}\|_{\infty}d|x_{2}-x_{1}|}
\end{align*}
Putting all this together, we have that: 
\begin{equation*}
    |h(x_{2})-h(x_{1})|\lesssim{\|h\|_{\infty}\Big(\frac{|x_{2}-x_{1}|}{d}\Big)+\|\Delta{h}\|_{\infty}d|x_{2}-x_{1}|}
\end{equation*}
\end{proof}
\noindent Recall that Proposition \ref{prop: convolution has small Laplacian} tells us that if $h^{\bullet}$ is a discrete harmonic function, its convolution with a smooth mollifier, $\phi\ast{h^{\bullet}}$, is almost harmonic in that $\Delta{(\phi\ast{h^{\bullet}})}\approx{0}$.  Hence, taking $h=(\phi\ast{h^{\bullet}})$ in our estimate from Proposition \ref{prop: almost harmonic functions satisfy a Harnack type estimate}, we have that the convolution of a discrete harmonic function with a smooth mollifier, satisfies a Harnack-type estimate. As a consequence, we can recover a Harnack-type estimate for discrete harmonic functions on orthodiagonal maps, on a mesoscopic scale:
\begin{prop}
\label{prop: Lipschitz regularity on a mesoscopic scale (base case)}
There exists an absolute constant $C>0$ so that if $G=(V^{\bullet}\sqcup{V^{\circ}}, E)$ is an orthodiagonal map with edges of length at most $\varepsilon$, $h^{\bullet}:V^{\bullet}\rightarrow{\mathbb{R}}$ is harmonic, and $z,w\in{V^{\bullet}}$ are vertices of $V^{\bullet}$ so that $|z-w|\geq{d\big({\small \frac{\varepsilon}{d}}\big)^{{\frac{\beta^{2}}{2(1+\beta)}}}}$ where $d=\text{dist}(z,\partial{\widehat{G}})\wedge{\text{dist}(w,\partial{\widehat{G}})}$, then:
 \begin{equation*}
        |h^{\bullet}(z)-h^{\bullet}(w)|\leq{C\|h^{\bullet}\|\Big(\frac{|z-w|}{d}\Big)}
\end{equation*}
\end{prop}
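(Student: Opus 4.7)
The plan is to exploit the fact, established in Section 4, that the convolution $\tilde h := \varphi_\delta \ast h^{\bullet}$ of a discrete harmonic function with a smooth mollifier of radius $\delta$ is \emph{almost} continuously harmonic. I will apply Proposition \ref{prop: almost harmonic functions satisfy a Harnack type estimate} to $\tilde h$ to obtain a near-Lipschitz estimate, then use the weak Hölder estimate of Lemma \ref{lem: discrete harmonic functions are beta Holder in the bulk} to transfer it back to $h^{\bullet}$. The free parameter $\delta$ will be chosen so that the Laplacian error and the mollification error balance, and the hypothesis $|z-w| \geq d(\varepsilon/d)^{\beta^{2}/(2(1+\beta))}$ will emerge exactly as the resulting compatibility condition.

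Concretely, I fix a radially symmetric smooth mollifier $\varphi$ supported on $B(0,1)$, set $\varphi_\delta(x) := \delta^{-2}\varphi(x/\delta)$, and extend $h^{\bullet}$ to $\widehat{G}$ (say by linear interpolation on a triangulation of the faces of $G^{\bullet}$). We may assume $|z-w| \leq d/2$, else the estimate is trivial. Fix $\delta \in [K\varepsilon, d/2]$ to be chosen. Writing $\tilde h(z) - h^{\bullet}(z) = \int \varphi_\delta(z-u)(h^{\bullet}(u) - h^{\bullet}(z))\, dA(u)$ and applying Lemma \ref{lem: discrete harmonic functions are beta Holder in the bulk} pointwise inside the integral, the mollification error is $\lesssim \|h^{\bullet}\|(\delta/d)^{\beta}$, and similarly at $w$.

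Next, I apply Proposition \ref{prop: almost harmonic functions satisfy a Harnack type estimate} to $\tilde h$ on the subdomain $\Omega^\delta := \{x \in \widehat G : \mathrm{dist}(x,\partial \widehat G) > \delta\}$, in which $z,w$ sit at distance at least $d/2$ from the boundary. Bounding $|\Delta \tilde h|$ on $B(z,d/2)\cup B(w,d/2)$ via Proposition \ref{prop: convolution has small Laplacian} (where distance to $\partial \widehat G$ is at least $d/2$), this gives
\begin{equation*}
|\tilde h(z) - \tilde h(w)| \lesssim \|h^{\bullet}\|\,\frac{|z-w|}{d} + \|h^{\bullet}\|\bigl(\varepsilon^{1/2}\delta^{-5/2} + \varepsilon^{\beta/(1+\beta)}\delta^{-2}d^{-\beta/(1+\beta)}\bigr)\cdot d \cdot |z-w|.
\end{equation*}
Since $\delta \leq d$, the first Laplacian summand is absorbed by the second; demanding that the resulting contribution be $\lesssim \|h^{\bullet}\||z-w|/d$ amounts to $\delta^{2} \gtrsim \varepsilon^{\beta/(1+\beta)}d^{(2+\beta)/(1+\beta)}$. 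I therefore set $\delta = C_{0}\,\varepsilon^{\beta/(2(1+\beta))}\,d^{(2+\beta)/(2(1+\beta))}$ for a sufficiently large absolute constant $C_{0}$. With this choice $\delta/d = (\varepsilon/d)^{\beta/(2(1+\beta))}$, so the mollification error satisfies $(\delta/d)^{\beta} = (\varepsilon/d)^{\beta^{2}/(2(1+\beta))} \leq |z-w|/d$ by the hypothesis. The triangle inequality combining Steps 1 and 2 then yields the claimed bound.

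The main obstacle is simply bookkeeping: one has to verify $K\varepsilon \leq \delta \leq d/2$ (both follow from $\varepsilon \ll d$, which itself follows from $|z-w|\leq d/2$ together with the hypothesis), check that in our regime the second summand of Proposition \ref{prop: convolution has small Laplacian} really does dominate (using $d\geq \delta$) so that the uniform bound on $\Delta\tilde h$ used above is valid, and confirm the quantitative match between the two error terms. Beyond this optimization there is no new idea — the proof is a clean synthesis of Proposition \ref{prop: almost harmonic functions satisfy a Harnack type estimate}, Proposition \ref{prop: convolution has small Laplacian}, and Lemma \ref{lem: discrete harmonic functions are beta Holder in the bulk}.
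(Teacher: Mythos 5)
Your proposal takes the same route as the paper's proof: decompose via the triangle inequality through the mollification $\varphi_\delta \ast h^\bullet$, control the two mollification-error terms by Lemma \ref{lem: discrete harmonic functions are beta Holder in the bulk}, control the middle term by Proposition \ref{prop: almost harmonic functions satisfy a Harnack type estimate} combined with the Laplacian bound of Proposition \ref{prop: convolution has small Laplacian}, and then choose $\delta$ to balance. The only cosmetic difference is the parametrization of the optimal $\delta$: the paper sets $\delta = d(|z-w|/d)^{1/\beta}$ so that the mollification error is exactly $|z-w|/d$ and then checks the Laplacian terms; you fix $\delta = C_0\,\varepsilon^{\beta/(2(1+\beta))}d^{(2+\beta)/(2(1+\beta))}$ (independent of $|z-w|$) so that the dominant Laplacian term is exactly $\lesssim |z-w|/d$ and then check the mollification error. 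Both choices coincide at the boundary case $|z-w| = d(\varepsilon/d)^{\beta^2/(2(1+\beta))}$ and both lead to the same exponent.

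One small slip worth flagging: your parenthetical reason for dropping the $\varepsilon^{1/2}\delta^{-5/2}$ Laplacian term — ``since $\delta \leq d$'' — is not the right justification. In fact $\varepsilon^{1/2}\delta^{-5/2} \lesssim \varepsilon^{\beta/(1+\beta)}\delta^{-2}d^{-\beta/(1+\beta)}$ is equivalent to $\delta \gtrsim \varepsilon^{(1-\beta)/(1+\beta)}d^{2\beta/(1+\beta)}$, which is a \emph{lower} bound on $\delta$, not an upper bound, so $\delta \leq d$ alone doesn't give it. What actually saves you is that with your specific $\delta$, the ratio of the first term to the second is $(\varepsilon/d)^{(2-3\beta)/(4(1+\beta))} \leq 1$ (since $\beta < 2/3$ and $\varepsilon \leq d$). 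So the conclusion you flagged as needing verification does hold, but the justification should invoke the specific choice of $\delta$ together with $\varepsilon \lesssim d$, not merely $\delta \leq d$.
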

\begin{proof}
    Let $\phi$ be a smooth mollifier supported on the unit ball $B(0,1)\subseteq{\mathbb{R}^{2}}$. Then for any $\delta>0$, $\phi_{\delta}(x):=\delta^{-2}\phi(\delta^{-1}x)$ is a smooth mollifier supported on $B(0,\delta)$. Suppose $z$ and $w$ are vertices of $V^{\bullet}$. By the triangle inequality: 
    \begin{equation*}
        |h^{\bullet}(w)-h^{\bullet}(z)|\leq{|h^{\bullet}(w)-(\phi_{\delta}\ast{h^{\bullet}})(w)|+|(\phi_{\delta}\ast{h^{\bullet}})(w)-(\phi_{\delta}\ast{h^{\bullet}})(z)|+|(\phi_{\delta}\ast{h^{\bullet}})(z)-h^{\bullet}(z)|}
    \end{equation*}
    By Lemma \ref{lem: discrete harmonic functions are beta Holder in the bulk}:
    \begin{equation}
    \label{eqn: difference between h- bullet and its mollification}
        |(\phi_{\delta}\ast{h^{\bullet}})(z)-h^{\bullet}(z)|, |(\phi_{\delta}\ast{h^{\bullet}})(w)-h^{\bullet}(w)|=O(\delta^{\beta}\cdot{d^{-\beta}}\cdot\|h^{\bullet}\|) 
    \end{equation}
    On the other hand, applying Proposition \ref{prop: almost harmonic functions satisfy a Harnack type estimate} with $h=(\phi_{\delta}\ast{h^{\bullet}})$ and using our estimate for the Laplacian of $(\phi_{\delta}\ast{h^{\bullet}})$ from Proposition \ref{prop: convolution has small Laplacian}, we have that: 
    \begin{align}
        \label{eqn: modulus of continuity of delta- mollified h- bullet}
        |(\phi_{\delta}\ast{h^{\bullet}})(w)-(\phi_{\delta}\ast{h^{\bullet}})(z)|=&O(\|h^{\bullet}\|\Big(\frac{|z-w|}{d}\Big))+O(\|\Delta(\varphi_{\delta}\ast{h^{\bullet}})\|\cdot{d}\cdot{|z-w|}) \notag \\ 
        =&O\big(\|h^{\bullet}\|\Big(\frac{|z-w|}{d}\Big)\big)+O\big(\varepsilon^{\frac{\beta}{1+\beta}}\delta^{-2}d^{2-\frac{\beta}{1+\beta}}\|h^{\bullet}\|\Big(\frac{|z-w|}{d}\Big)\big) \\
        &+O\big(\varepsilon^{\frac{1}{2}}\delta^{-\frac{5}{2}}d^{2}\|h^{\bullet}\|\Big(\frac{|z-w|}{d}\Big)\big) \notag
    \end{align}
    Note that having chosen a particular smooth mollifer $\phi$, we can disregard the $\|D^{2}\phi\|$ and $\|D^{3}\phi\|$ terms in Proposition \ref{prop: convolution has small Laplacian}, since they are just constants. Looking at the error term in Equation \ref{eqn: difference between h- bullet and its mollification}, to get the kind of estimate for $h^{\bullet}$ we are looking for, we need it to be the case that $\delta^{\beta}d^{-\beta}\leq{\big(\frac{|z-w|}{d}\big)} \hspace{5pt} \Longleftrightarrow \hspace{5pt} \delta\leq{d\big(\frac{|z-w|}{d}\big)^{1/\beta}}$. On the other hand, looking at the estimate for the modulus of continuity of $(\varphi_{\delta}\ast{h^{\bullet}})$ in Equation \ref{eqn: modulus of continuity of delta- mollified h- bullet}, all of the powers of $\delta$ are negative. Hence, to get the best estimate possible, we should take $\delta$ to be as large as possible. Namely, $\delta=d\big(\frac{|z-w|}{d}\big)^{1/\beta}$. Plugging this choice of $\delta$ into Equations \ref{eqn: difference between h- bullet and its mollification} and \ref{eqn: modulus of continuity of delta- mollified h- bullet} and putting all this together, we get that: 
    \begin{align}
        |h^{\bullet}(w)-h^{\bullet}(z)|&=O\big(\|h^{\bullet}\|\Big(\frac{|z-w|}{d}\Big)\big)+O\big(\varepsilon^{\frac{\beta}{1+\beta}}\cdot|z-w|^{-\frac{2}{\beta}}\cdot{d^{\frac{2}{\beta}-\frac{\beta}{1+\beta}}}\cdot\|h^{\bullet}\|\Big(\frac{|z-w|}{d}\Big)\big) \\
        &+O\big(\varepsilon^{\frac{1}{2}}\cdot{|z-w|^{-\frac{5}{2\beta}}}\cdot{d^{\frac{5}{2\beta}-\frac{1}{2}}}\cdot\|h^{\bullet}\|\Big(\frac{|z-w|}{d}\Big)\big) \notag
    \end{align}
    To get an effective estimate, we need it to be the case that:
    \begin{align*}
        \varepsilon^{\frac{\beta}{1+\beta}}\cdot|z-w|^{-\frac{2}{\beta}}\cdot{d^{\frac{2}{\beta}-\frac{\beta}{1+\beta}}}&\leq{1}  & &\Longleftrightarrow & |z-w|&\geq{d\Big({\small \frac{\varepsilon}{d}}\Big)^{\frac{\beta^{2}}{2(1+\beta)}}}\\
        \varepsilon^{\frac{1}{2}}\cdot{|z-w|^{-\frac{5}{2\beta}}}\cdot{d^{\frac{5}{2\beta}-\frac{1}{2}}}&\leq{1} & &\Longleftrightarrow & |z-w|&\geq{d\Big({\small \frac{\varepsilon}{d}}\Big)^{\frac{\beta}{5}}}
    \end{align*}
    Thus, as long as $|z-w|\geq{d\big({\small \frac{\varepsilon}{d}}\big)^{\frac{\beta}{5}\wedge{\frac{\beta^{2}}{2(1+\beta)}}}}$, we have that:
    \begin{equation*}
        |h^{\bullet}(z)-h^{\bullet}(w)|\leq{C\|h^{\bullet}\|\Big(\frac{|z-w|}{d}\Big)}
    \end{equation*}
   where $C>0$ is some absolute constant. Since the absolute constant $\beta>0$ from Lemma \ref{lem: discrete harmonic functions are beta Holder in the bulk} is small, $\frac{\beta^{2}}{2(1+\beta)}<\frac{\beta}{5}$.
\end{proof} 
\noindent Notice that Proposition \ref{prop: Lipschitz regularity on a mesoscopic scale (base case)} is a weaker version of Theorem \ref{thm: optimal Lipschitz regularity on mesoscopic scales} that requires a larger mesoscopic scale for our Harnack-type estimate to kick in. In Section \ref{subsec: bootstrap argument}, we will see how, using Proposition \ref{prop: Lipschitz regularity on a mesoscopic scale (base case)} as a starting point, we can use a bootstrap argument to successively improve the scale on which our Harnack estimate holds, giving us Theorem \ref{thm: optimal Lipschitz regularity on mesoscopic scales}.  

\subsection{Refining our Mesoscopic Scale}
\label{subsec: bootstrap argument}

In this section, we will refine our estimate in Proposition \ref{prop: Lipschitz regularity on a mesoscopic scale (base case)} by improving the mesoscopic scale on which our Harnack-type estimate holds. To do this, we first observe that discrete harmonic functions on orthodiagonal maps are $\beta$- H\"older in the bulk on small scales and Lipschitz in the bulk on large scales. This gives us improved H\"older regularity in the bulk on intermediate scales:
\begin{prop}
\label{prop: refined Holder regularity on intermediate scales}
(refined H\"older regularity on intermediate scales) Suppose that for any orthodiagonal map $G$ with edges of length at most $\varepsilon$ and any harmonic function $h:V^{\bullet}\rightarrow{\mathbb{R}}$, we have that:
\begin{itemize}
    \item (Lipschitz regularity on large scales) if $z,w\in{V^{\bullet}}$ satisfy $|z-w|\geq{d\big({\small \frac{\varepsilon}{d}}\big)^{\alpha}}$ for some fixed $\alpha\in{(0,1)}$, then:
    \begin{equation*}
        |h(z)-h(w)|\leq{C\|h\|\Big({\small \frac{|z-w|}{d}}\Big)}
    \end{equation*}
    where $C>0$ is an absolute constant. 
\end{itemize}
Then:
\begin{itemize}
    \item (Improved H\"older regularity on intermediate scales) for any $\gamma\in{(\alpha,1)}$, if $G$ is an orthodiagonal map with edges of length at most $\varepsilon$, $h:V^{\bullet}\rightarrow{\mathbb{R}}$ is harmonic, and $z,w\in{V^{\bullet}}$ satisfy $|z-w|=d\big({\small \frac{\varepsilon}{d} }\big)^{\gamma}$ where $d=\text{dist}(z,\partial{\widehat{G}})\wedge{\text{dist}(w,\partial{\widehat{G}})}$, then: 
    \begin{equation*}
        |h(z)-h(w)|\leq{C\|h\|\Big(\frac{|z-w|}{d}\Big)^{\beta+\frac{\alpha}{\gamma}(1-\beta)}}
    \end{equation*}
\end{itemize}

\end{prop}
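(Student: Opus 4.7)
The plan is to interpolate between the Lipschitz regularity hypothesis, which is effective on scales $\geq R := d(\varepsilon/d)^\alpha$, and the microscopic-scale weak $\beta$-H\"older estimate of Lemma \ref{lem: discrete harmonic functions are beta Holder in the bulk}. The idea is that the Lipschitz hypothesis suppresses the sup norm of a shifted harmonic function by a factor of $R/d$ on any ball of radius $R$ centered in the bulk, and below this scale weak H\"older regularity then gives the extra gain $(|z-w|/R)^\beta$. Fix $z, w \in V^\bullet$ with $|z-w| = d(\varepsilon/d)^\gamma$ and $\gamma \in (\alpha, 1)$, so $\varepsilon \ll |z-w| \ll R \ll d$ provided $\varepsilon \ll d$; the degenerate case $\varepsilon \asymp d$ is trivial.

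The first step is to bound the oscillation of $h$ on $B(z, R/4) \cap V^\bullet$ by $O(\|h\|_\infty R/d)$. Given $u_1, u_2$ in this ball, I would pick an anchor vertex $v \in V^\bullet$ with $2R \leq |v-z| \leq 3R$; such a $v$ exists because $V^\bullet$ is $\varepsilon$-dense in $\widehat{G}$ and $\varepsilon \ll R \ll d$. Then $|v - u_i| \in [7R/4, 13R/4]$ is above the Lipschitz threshold $R$, while all three points lie at distance $\asymp d$ from $\partial\widehat{G}$. Two applications of the Lipschitz hypothesis and the triangle inequality yield $|h(u_1) - h(u_2)| \lesssim \|h\|_\infty (R/d)$.

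Next, let $\tilde h := h - h(z)$ and invoke Lemma \ref{lem: discrete harmonic functions are beta Holder in the bulk} on a suborthodiagonal map $G' \subseteq G$ whose domain approximates $B(z, R/4)$, extracted using Lemma 4.3 of \cite{BP24} as in the proof of Proposition \ref{prop: continuous Laplacian is average of discrete Laplacian}. On $V^\bullet_{G'}$ one has $\|\tilde h\|_\infty \lesssim \|h\|_\infty(R/d)$ by Step 1; $\tilde h$ remains harmonic on $\text{Int}(V^\bullet_{G'})$ because the discrete Laplacian is local; and $\text{dist}(z, \partial V^\bullet_{G'}) \wedge \text{dist}(w, \partial V^\bullet_{G'}) \asymp R$ since $|z-w| \ll R$. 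The regularity hypothesis $|z-w| \geq K\varepsilon$ is automatic because $|z-w|/\varepsilon = (d/\varepsilon)^{1-\gamma} \to \infty$. Lemma \ref{lem: discrete harmonic functions are beta Holder in the bulk} therefore gives
\begin{equation*}
|h(z) - h(w)| = |\tilde h(z) - \tilde h(w)| \lesssim \|h\|_\infty \Big(\frac{R}{d}\Big)\Big(\frac{|z-w|}{R}\Big)^\beta.
\end{equation*}
Substituting $R/d = (\varepsilon/d)^\alpha$ and $|z-w|/R = (\varepsilon/d)^{\gamma-\alpha}$ produces $(\varepsilon/d)^{\alpha + \beta(\gamma-\alpha)}$; since $\alpha + \beta(\gamma-\alpha) = \gamma[\beta + (\alpha/\gamma)(1-\beta)]$, rewriting this as a power of $|z-w|/d = (\varepsilon/d)^\gamma$ is exactly the claimed estimate.

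The main obstacle is the technical bookkeeping around the suborthodiagonal map $G'$: Lemma 4.3 of \cite{BP24} is stated for squares rather than balls, so I would either approximate $B(z, R/4)$ from inside by an inscribed square (losing only absolute constants) or adapt the construction directly, and then verify that interior vertices of $V^\bullet_{G'}$ have all their $G^\bullet$-neighbors inside $G'$ so that harmonicity is preserved under restriction. Everything else is a straightforward interpolation computation of the kind already familiar from Section \ref{sec: Lipschitz Regularity on a Mesoscopic Scale for Harmonic Functions on Orthodiagonal Maps}.
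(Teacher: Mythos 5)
Your proposal is correct and reaches the same exponent $\alpha+\beta(\gamma-\alpha)=\gamma\big(\beta+\tfrac{\alpha}{\gamma}(1-\beta)\big)$ via the same two-factor decomposition, but it organizes the argument differently. The paper works entirely inside $G$: it builds the discrete ball $B_G(w,R)$ with $R=d(\varepsilon/d)^\alpha$, uses the maximum principle to construct a monotone descent path $\gamma$ from $w$ to $\partial{V_B^\bullet}$, writes $h(z)-h(w)$ via optional stopping, and then bounds the escape probability $\mathbb{P}^z(\tau_{\partial V_B^\bullet}<\tau_\gamma)$ directly by the Beurling argument — i.e. it re-runs the proof of Lemma \ref{lem: discrete harmonic functions are beta Holder in the bulk} inline, with the prefactor $\max_{u\in\partial V_B^\bullet}|h(u)-h(w)|\lesssim\|h\|(R/d)$ coming from the Lipschitz hypothesis applied with base point $w$ (since boundary vertices of $B_G(w,R)$ sit at distance $\asymp R$ from $w$, comfortably above the threshold). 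You instead shift to $\tilde h=h-h(z)$, suppress its sup norm on a sub-orthodiagonal ball using a two-hop anchor argument, and then apply Lemma \ref{lem: discrete harmonic functions are beta Holder in the bulk} as a black box to the restriction. Your route is more modular and avoids repeating the probabilistic estimate, at the modest cost of the anchor step (which the paper does not need, since it only estimates $h$ at boundary vertices, already far from $w$) and the technical bookkeeping around the sub-map. On the latter point: you can skip Lemma 4.3 of \cite{BP24} entirely — the discrete ball $B_G(z,R/4)$, defined as the suborthodiagonal map of all quadrilaterals with a primal vertex in $B(z,R/4)$, is exactly the construction the paper uses in both this proof and the proof of Lemma \ref{lem: discrete harmonic functions are beta Holder in the bulk}, and it automatically preserves all $G^\bullet$-neighbors and conductances at interior vertices, so harmonicity of $\tilde h$ on $\text{Int}(V_{G'}^\bullet)$ is immediate. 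The dual-contour structure from Lemma 4.3 of \cite{BP24} is only needed in Proposition \ref{prop: continuous Laplacian is average of discrete Laplacian}, where one integrates by parts to a boundary contour; here you need no such structure.
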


\begin{proof}
    Suppose $G$ is an orthodiagonal map with edges of length at most $\varepsilon$ and $z,w\in{V^{\bullet}}$ satisfy $|z-w|=d\big({\small \frac{|z-w|}{d}}\big)^{\gamma}$ for some $\gamma\in{(\alpha,1)}$. Suppose $h:V^{\bullet}\rightarrow{\mathbb{R}}$ is harmonic and WLOG, $h(w)\leq{h(z)}$. Let $B=B_{G}(w,d\big({\small \frac{|z-w|}{d}}\big)^{\alpha})$ be the discrete ball of radius $d\big({\small\frac{|z-w|}{d}}\big)^{\alpha}$ centered at $w$ in $G$. That is, $B=(V_{B}^{\bullet}\sqcup{V_{B}^{\circ}},E_{B})$ is the suborthodiagonal map of $G$ consisting of all the quadrilaterals of $G$ with a primal vertex in the open ball $B(w,d\big({\small \frac{|z-w|}{d}}\big)^{\alpha})$. Then: 
    \begin{align*}
        \text{Int}(V_{B}^{\bullet})&=V^{\bullet}\cap{B(w,d\Big(\frac{|z-w|}{d}\Big)^{\alpha})} \\
        \partial{V_{B}}^{\bullet}&=\{v\in{V^{\bullet}}:v\notin{B(w,d\Big({\small \frac{|z-w|}{d}}\Big)^{\alpha})},\hspace{2pt}u\sim{v} \hspace{3pt}\text{for some  $u\in{V^{\bullet}\cap{B(w,d\Big({\small \frac{|z-w|}{d}}\Big)^{\alpha})}}$}\}
    \end{align*}
    By the maximum principle, we can find a nearest-neighbor path $\gamma=(w_{0}, w_{1}, ..., w_{m})$ of vertices in $G^{\bullet}$ so that $w_{0}=w$, $w_{m}\in{\partial{V_{B}^{\bullet}}}$ and $h(w_{i+1})\leq{h(w_{i})}$ for all $i$. In particular, it follows that $h(w_{i})\leq{h(w)}$ for all $i$. If $(S_{n})_{n\geq{0}}$ is a simple random walk on $G^{\bullet}$, and $\tau_{\gamma}$ and $\tau_{\partial{V_{B}^{\bullet}}}$ are the hitting times of $\gamma$ and $\partial{V_{B}^{\bullet}}$ by our random walk, by the optional stopping theorem, we have that: 
    \begin{align*}
        h(z)-h(w)&=\mathbb{E}^{z}\big(h(S_{\tau_{\gamma}\wedge{\tau_{\partial{V_{B}^{\bullet}}}}})-h(w)\big)=\mathbb{E}^{z}\big(\underbrace{(h(S_{\tau_{\gamma}})-h(w))}_{\leq{0}}1_{\tau_{\gamma}\leq{\tau_{\partial{V_{B}^{\bullet}}}}}\big)+\mathbb{E}^{z}\big((h(S_{\tau_{\partial{B}}})-h(w))1_{\tau_{\partial{V_{B}^{\bullet}}}<{\tau_{\gamma}}}\big) \\
        &\leq{\max_{u\in{\partial{V_{B}^{\bullet}}}}|h(u)-h(w)|\cdot{\mathbb{P}(\tau_{\partial{V_{B}^{\bullet}}}<\tau_{\gamma})}}\lesssim{\|h\|\Big(\frac{d\big(\frac{\varepsilon}{d}\big)^{\alpha}}{d}\Big)\Big(\frac{d\big(\frac{\varepsilon}{d}\big)^{\gamma}}{d\big(\frac{\varepsilon}{d}\big)^{\alpha}}\Big)^{\beta}} \\
        &\lesssim{\|h\|\Big(\frac{\varepsilon}{d}\Big)^{\alpha+\beta(\gamma-\alpha)}}=\|h\|\Big(\frac{d\big(\frac{\varepsilon}{d}\big)^{\gamma}}{d}\Big)^{\beta+\frac{\alpha}{\gamma}(1-\beta)}=\|h\|\Big(\frac{|z-w|}{d}\Big)^{\beta+\frac{\alpha}{\gamma}(1-\beta)}
    \end{align*}
\end{proof}

\noindent From here. the story is as follows: 
\begin{enumerate}
    \item Observe that in our estimate for $\Delta{(\phi_{\delta}\ast{h^{\bullet}})}(z)$ in Proposition \ref{prop: convolution has small Laplacian}, the $\frac{\beta}{1+\beta}$ exponents in the second term come from the fact that harmonic functions are $\beta$- H\"older in the bulk on scales $\asymp\ell$ where $\varepsilon\ll{\ell}\ll{\delta}\leq{\frac{d}{2}}$.
    \item Hence, if we use Proposition \ref{prop: refined Holder regularity on intermediate scales} in place of Lemma \ref{lem: discrete harmonic functions are beta Holder in the bulk}, we can improve our estimate for $\Delta{(\phi_{\delta}\ast{h^{\bullet}})}(z)$.
    \item However, the scale on which we have that discrete harmonic functions on orthodiagonal maps are Lipschitz in the bulk in Proposition \ref{prop: Lipschitz regularity on a mesoscopic scale (base case)}, comes from: 
    \begin{enumerate}
        \item the fact that discrete harmonic functions on orthodiagonal maps are $\beta$-H\"older in the bulk, which is used on the intermediate scale $\delta$.
        \item our estimate for $\Delta{(\phi_{\delta}\ast{h^{\bullet}})}(z)$ in Proposition \ref{prop: convolution has small Laplacian}.
    \end{enumerate}
    \item Thus, our improved H\"older regularity on intermediate scales in Proposition \ref{prop: refined Holder regularity on intermediate scales} can be used to improve the scale at which we can ensure discrete harmonic functions are Lipschitz in the bulk in Proposition \ref{prop: Lipschitz regularity on a mesoscopic scale (base case)}.
    \item But now we can repeat this process! That is, we know that discrete harmonic functions are Lipschitz on a scale smaller than the one specified in Proposition \ref{prop: Lipschitz regularity on a mesoscopic scale (base case)}. This improves our H\"older regularity on intermediate scales in Proposition \ref{prop: refined Holder regularity on intermediate scales}. Thus, repeating steps 2 through 4 with this improved H\"older regularity on intermediate scales gives us an even smaller scale on which discrete harmonic functions are Lipschitz. We can repeat this process indefinitely, taking the scale on which we have Lipschitz regularity and plugging it into our argument in steps 2 through 4 to make it even smaller.
\end{enumerate}
In short, we have a bootstrap argument for refining the scale at which we know that discrete harmonic functions are Lipschitz in the bulk. This is encapsulated in the following result: 
\begin{prop}
\label{prop: the bootstrap}
(the bootstrap) Suppose we know that for some $\alpha\in{(0,1)}$, there exists an absolute constant $C>0$ such that for any orthodiagonal map $G=(V^{\bullet}\sqcup{V^{\circ}}, E)$ with edges of length at most $\varepsilon$, any harmonic function $h:V^{\bullet}\rightarrow{\mathbb{R}}$, and any $z,w\in{V^{\bullet}}$ satisfying $|z-w|\geq{d\big({\small \frac{\varepsilon}{d}}\big)^{\alpha}}$, where $d=\text{dist}(z,\partial{\widehat{G}})\wedge{\text{dist}(w,\partial{\widehat{G}})}$, we have that: 
\begin{equation*}
    |h(z)-h(w)|\leq{C\|h\|\Big(\frac{|z-w|}{d}\Big)}
\end{equation*}
Then taking $\alpha'=(1-\beta)\alpha+\beta\min\{\frac{1}{5}, \frac{\beta+\alpha-\beta\alpha}{2(1+\beta)}\}$, there exists an absolute constant $C'>0$ so that for any orthodiagonal map $G=(V^{\bullet}\sqcup{V^{\circ}}, E)$ with edges of length at most $\varepsilon$, any harmonic function $h:V^{\bullet}\rightarrow{\mathbb{R}}$, and any $z,w\in{V^{\bullet}}$ satisfying $|z-w|\geq{d\big({\small \frac{\varepsilon}{d}}\big)^{\alpha'}}$, where $d=\text{dist}(z,\partial{\widehat{G}})\wedge{\text{dist}(w,\partial{\widehat{G}})}$, we have that:
\begin{equation*}
    |h(z)-h(w)|\leq{C'\|h\|\Big(\frac{|z-w|}{d}\Big)}
\end{equation*}
\end{prop}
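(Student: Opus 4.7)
The plan is to repeat the proof of Proposition \ref{prop: Lipschitz regularity on a mesoscopic scale (base case)} essentially verbatim, but everywhere we invoked Lemma \ref{lem: discrete harmonic functions are beta Holder in the bulk} (the baseline $\beta$-H\"older regularity of discrete harmonic functions) I will instead feed the hypothesis through Proposition \ref{prop: refined Holder regularity on intermediate scales} and use the resulting sharper H\"older exponent. Setting $\mu(\gamma) := \beta + \tfrac{\alpha}{\gamma}(1-\beta)$, the hypothesis together with Proposition \ref{prop: refined Holder regularity on intermediate scales} gives, for every $\gamma \in (\alpha,1)$ and every pair of vertices $z',w' \in V^{\bullet}$ separated by $|z'-w'| = d'(\varepsilon/d')^{\gamma}$ (with $d' = d_{z'}\wedge d_{w'}$),
\begin{equation*}
|h(z')-h(w')| \lesssim \|h\|\Big(\tfrac{|z'-w'|}{d'}\Big)^{\mu(\gamma)},
\end{equation*}
which strictly improves on the baseline exponent $\beta$ throughout the intermediate scales.

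I would first use this to re-derive the Laplacian bound of Proposition \ref{prop: convolution has small Laplacian}. The only place in that proof that invoked the baseline H\"older regularity was the estimate $|h^{\bullet}(w) - h^{\bullet}(w_{S})| \lesssim \|h^{\bullet}\|(\ell/d)^{\beta}$ for $w$ in a square $S$ of side length $\ell$. Writing $\ell = d(\varepsilon/d)^{\gamma_{2}}$ and replacing $\beta$ by $\mu(\gamma_{2})$, this estimate becomes $|h^{\bullet}(w)-h^{\bullet}(w_{S})| \lesssim \|h^{\bullet}\|(\varepsilon/d)^{\gamma_{2}\beta + \alpha(1-\beta)}$, so the Laplacian bound becomes
\begin{equation*}
|\Delta(\phi_{\delta}\ast h^{\bullet})(z)| \lesssim \|h^{\bullet}\|\Big[\varepsilon\ell^{-1}\delta^{-2} + \delta^{-2}(\varepsilon/d)^{\gamma_{2}\beta + \alpha(1-\beta)} + \ell\delta^{-3}\Big].
\end{equation*}
The first and third error terms come from Proposition \ref{prop: continuous Laplacian is average of discrete Laplacian} and are unaffected by H\"older regularity. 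Reoptimising in $\gamma_{2} \in (\alpha,1)$ produces a strictly sharper Laplacian estimate than the one used in the base case.

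Next I would redo the proof of Proposition \ref{prop: Lipschitz regularity on a mesoscopic scale (base case)}. Starting from the triangle inequality
\begin{equation*}
|h^{\bullet}(w) - h^{\bullet}(z)| \leq |h^{\bullet}(w) - (\phi_{\delta}\ast h^{\bullet})(w)| + |(\phi_{\delta}\ast h^{\bullet})(w) - (\phi_{\delta}\ast h^{\bullet})(z)| + |(\phi_{\delta}\ast h^{\bullet})(z) - h^{\bullet}(z)|,
\end{equation*}
the first and third terms are now controlled by the sharper H\"older bound at scale $\delta = d(\varepsilon/d)^{\gamma_{1}}$, giving $O\big(\|h^{\bullet}\|(\varepsilon/d)^{\gamma_{1}\beta + \alpha(1-\beta)}\big)$ in place of the base $O\big(\|h^{\bullet}\|(\delta/d)^{\beta}\big)$; the middle term is bounded by Proposition \ref{prop: almost harmonic functions satisfy a Harnack type estimate} applied to $\phi_{\delta}\ast h^{\bullet}$, together with the improved Laplacian estimate. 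Requiring each of the three resulting error terms to be at most $\|h^{\bullet}\|(|z-w|/d)$ and jointly optimising over $\gamma_{1}$ and $\gamma_{2}$ produces the scale $|z-w| \geq d(\varepsilon/d)^{\alpha'}$ with the claimed $\alpha'$: the summand $(1-\beta)\alpha$ tracks the $\alpha(1-\beta)$ appearing in $\mu(\gamma)$, while the two arguments of the $\min$ come from balancing against the $\varepsilon^{1/2}\delta^{-5/2}$ term (yielding $1/5$) and against the improved H\"older-driven term (yielding $(\beta + \alpha - \beta\alpha)/(2(1+\beta))$), exactly as the two competing error terms gave $\tfrac{1}{5}$ and $\tfrac{\beta}{2(1+\beta)}$ in the base case (which is recovered by formally substituting $\alpha = 0$ into the formula).

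The hard part is the bookkeeping: tracing the two competing optimisations through carefully enough to verify that the optimal exponent simplifies to exactly the expression $\alpha' = (1-\beta)\alpha + \beta\min\{1/5,(\beta+\alpha-\beta\alpha)/(2(1+\beta))\}$. Conceptually no new ideas are required beyond those already used in the proof of Proposition \ref{prop: Lipschitz regularity on a mesoscopic scale (base case)}; the bootstrap is simply the same argument run with a strictly sharper H\"older input for $h^{\bullet}$. Iterating the bootstrap drives $\alpha$ toward the fixed point $\beta/(1+3\beta)$ of the recursion $\alpha \mapsto \alpha'$, which is precisely the threshold appearing in Theorem \ref{thm: optimal Lipschitz regularity on mesoscopic scales}.
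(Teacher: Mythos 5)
Your proposal is correct and matches the paper's proof essentially step-for-step: convolve with $\phi_{\delta}$ at scale $\delta = d(\varepsilon/d)^{c}$, replace every use of the baseline $\beta$-H\"older bound (both in bounding $|h-\phi_{\delta}\ast h|$ and inside the Laplacian estimate of Proposition \ref{prop: convolution has small Laplacian} at intermediate scale $\ell = d(\varepsilon/d)^{\gamma}$) by the refined exponent $\gamma\beta + \alpha(1-\beta)$ from Proposition \ref{prop: refined Holder regularity on intermediate scales}, apply Proposition \ref{prop: almost harmonic functions satisfy a Harnack type estimate}, and optimize over $\gamma$ and then $c$ to land exactly on $\alpha' = (1-\beta)\alpha + \beta\min\{1/5, (\beta+\alpha-\beta\alpha)/(2(1+\beta))\}$. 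Your identification of where the two arguments of the $\min$ come from, and your sanity check that $\alpha=0$ formally recovers the base-case threshold, are both accurate.
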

\begin{proof}
     Let $\phi$ be a smooth mollifier supported on the unit ball $B(0,1)\subseteq{\mathbb{R}^{2}}$. Then for any $\delta>0$, $\phi_{\delta}(x)=\delta^{-2}\phi(\delta^{-1}x)$ is a smooth mollifier, supported on $B(0,\delta)$. Just as in the proof of Proposition \ref{prop: almost harmonic functions satisfy a Harnack type estimate}, we convolve our discrete harmonic function $h$ with a smooth mollifier $\phi_{\delta}$, where $\delta$ is mesoscopic. With this in mind, we write $\delta=d\big({ \small \frac{\varepsilon}{d} } \big)^{c}$ and $\ell=d\big({ \small \frac{\varepsilon}{d} } \big)^{\gamma}$, where $0<c<\gamma<1$. Here, $\ell$ is some intermediate scale. Using  Proposition \ref{prop: refined Holder regularity on intermediate scales} in place of Lemma \ref{lem: discrete harmonic functions are beta Holder in the bulk}, we have that:
    \begin{equation*}
        |(\phi_{\delta}\ast{h})(z)-h(z)|, |(\phi_{\delta}\ast{h})(w)-h(w)|=O(\|h\|\Big(\frac{\varepsilon}{d}\Big)^{\beta{c}+\alpha(1-\beta)})
    \end{equation*}
    if $c>{\alpha}$. If $c\leq{\alpha}$, we have that:
    \begin{equation*}
        |(\phi_{\delta}\ast{h})(z)-h(z)|, |(\phi_{\delta}\ast{h})(w)-h(w)|=O(\|h\|\Big(\frac{\varepsilon}{d}\Big)^{c})
    \end{equation*}  
    In particular, notice that if we repeat our argument in Proposition \ref{prop: Lipschitz regularity on a mesoscopic scale (base case)}, picking a value of $c$ that is less than or equal to $\alpha$ will gives us, at best, Lipschitz regularity on scale $d\big({ \small \frac{\varepsilon}{d}}\big)^{c}\geq{d\big({ \small \frac{\varepsilon}{d}}\big)^{\alpha}}$. In short, we end up with an estimate that is inferior to the one we started with. Thus, if we want to improve on our initial estimate, we need to choose $c$ and $\gamma$ so that $0<\alpha<c<\gamma<1$.\\ \\
    Repeating our argument in Proposition \ref{prop: convolution has small Laplacian} using Proposition \ref{prop: refined Holder regularity on intermediate scales} in place of Lemma \ref{lem: discrete harmonic functions are beta Holder in the bulk}, we have the following analogue of Equation \ref{eqn: Laplacian estimate before optimizing in ell}:
    \begin{equation*}
        \Delta(\phi_{\delta}\ast{h})(z)=O(\|h\|\Big({\small \frac{\varepsilon}{d}}\Big)^{1-\gamma-2c}d^{-2})+O(\|h\|\Big({\small \frac{\varepsilon}{d}}\Big)^{\beta\gamma+\alpha(1-\beta)-2c}d^{-2})+O(\|h\|\Big({\small \frac{\varepsilon}{d}}\Big)^{\gamma-3c}d^{-2})
    \end{equation*}
   Optimizing in $\ell=d\big({ \small \frac{\varepsilon}{d} } \big)^{\gamma}$ for fixed $\delta=d\big({ \small \frac{\varepsilon}{d} } \big)^{c}$, we get that:
    \begin{equation}
    \label{eqn: booptstrap Laplacian estimate}
        \Delta(\phi_{\delta}\ast{h})(z)=O(\|h\|\Big({\small \frac{\varepsilon}{d}}\Big)^{\frac{1-5c}{2}}d^{-2})+O(\|h\|\Big({\small \frac{\varepsilon}{d}}\Big)^{\frac{\beta+\alpha-\alpha\beta}{(1+\beta)}-2c}d^{-2})
    \end{equation}
    Applying the estimate in Proposition \ref{prop: almost harmonic functions satisfy a Harnack type estimate} to $\phi_{\delta}\ast{h}$ with this Laplacian estimate, we have that: 
    \begin{align*}
        |(\phi_{\delta}\ast{h})(z)-(\phi_{\delta}\ast{h})(w)|=O\big(\|h\|\Big(\frac{|z-w|}{d}\Big)\big)+O\big(\|h\|\Big(\frac{|z-w|}{d}\Big)\Big( \frac{\varepsilon}{d}\Big)^{(\frac{1-5c}{2})\wedge{(\frac{\beta+\alpha-\alpha\beta}{(1+\beta)}}-2c)}\big)
    \end{align*}
    Combining this with our estimate for the difference between $h$ and $\phi_{\delta}\ast{h}$ from earlier, we have that: 
    \begin{equation*}
        |h(z)-h(w)|=O\big(\|h\|\Big(\frac{|z-w|}{d}\Big)\big)+O\big(\|h\|\Big(\frac{|z-w|}{d}\Big)\Big( \frac{\varepsilon}{d}\Big)^{(\frac{1-5c}{2})\wedge{(\frac{\beta+\alpha-\alpha\beta}{(1+\beta)}}-2c)}\big)+O\big(\|h\|\Big(\frac{\varepsilon}{d}\Big)^{\beta{c}+\alpha(1-\beta)}\big)
    \end{equation*}
    To find the scale on which we can ensure Lipschitz regularity of $h$, we want to pick $c$ as large as possible subject to the constraits: 
    \begin{align*}
        1-5c&\geq{0}, & \frac{\beta+\alpha(1-\beta)}{1+\beta}-2c&\geq{0}
    \end{align*}
    Thus, taking $c=\min\{\frac{1}{5}, \frac{\beta+\alpha-\alpha\beta}{2(1+\beta)}\}$, our estimate for $|h(z)-h(w)|$ above tells us that there exists an absolute constant $C'>0$ such that:
    \begin{equation*}
        |h(z)-h(w)|\leq{C'\Big(\frac{|z-w|}{d}\Big)}
    \end{equation*}
    for any $z,w\in{V^{\bullet}}$ such that $|z-w|\geq{d\big({\small \frac{\varepsilon}{d}}\big)^{\beta\alpha'+(1-\beta)\alpha}}$, where $\alpha'=\min\{\frac{1}{5}, \frac{\beta+\alpha-\alpha\beta}{2(1+\beta)}\}$.
\end{proof}
\noindent Theorem \ref{thm: optimal Lipschitz regularity on mesoscopic scales} now follows as a straightforward corollary of Proposition \ref{prop: Lipschitz regularity on a mesoscopic scale (base case)}, which serves as our base case, and  Proposition \ref{prop: the bootstrap}, which tells us how we can successively refine our mesoscopic scale:
\begin{proof}
    (of Theorem \ref{thm: optimal Lipschitz regularity on mesoscopic scales}) Consider the sequence $(\alpha_{n})_{n\geq{0}}$ such that $\alpha_{0}=\frac{\beta^{2}}{2(1+\beta)}$, and $\alpha_{n+1}=(1-\beta)\alpha_{n}+\beta\cdot\min\{\frac{1}{5}, \frac{\beta+\alpha_{n}-\beta\alpha_{n}}{2(1+\beta)}\}$ for all $n\geq{0}$. Combining our results in Proposition \ref{prop: Lipschitz regularity on a mesoscopic scale (base case)} and Proposition \ref{prop: the bootstrap}, we have that for all $n\in{\mathbb{N}_{0}}$, if $G=(V^{\bullet}\sqcup{V^{\circ}}, E)$ is an orthodiagonal map with edges of length at most $\varepsilon$, $h:V^{\bullet}\rightarrow{\mathbb{R}}$ is harmonic on $G^{\bullet}$, and $z,w\in{V^{\bullet}}$ satisfy $|z-w|\geq{d\big(\frac{\varepsilon}{d}\big)^{\alpha_{n}}}$, where $d=\text{dist}(z,\partial{G})\wedge{\text{dist}(w,\partial{G})}$, then:
    \begin{equation*}
        |h(z)-h(w)|\leq{C_{n}\|h\|\Big(\frac{|z-w|}{d}\Big)}
    \end{equation*}
    where $C_{n}>0$ is some absolute constant. Hence, to prove the desired result, it suffices to show that  $\lim\limits_{n\rightarrow{\infty}}\alpha_{n}=\frac{\beta}{1+3\beta}$.\\ \\
    By our recursion, $\alpha_{n+1}$ is the weighted average of $\alpha_{n}$ and the minimum of $\frac{1}{5}$ and $\frac{\beta+\alpha_{n}-\beta\alpha_{n}}{2(1+\beta)}$. Hence, if $\alpha_{0}<\frac{1}{5}$, it follows that $\alpha_{n}<\frac{1}{5}$ for all $n\in{\mathbb{N}_{0}}$. Additionally, observe that: 
    \begin{equation*}
        \frac{1}{5}<\frac{\beta+\alpha_{n}-\beta\alpha_{n}}{2(1+\beta)} \Longleftrightarrow{\alpha_{n}>{\frac{2-3\beta}{5(1-\beta)}}}
    \end{equation*}
    Since the function $f(\beta)=\frac{2-3\beta}{5(1-\beta)}$ is strictly decreasing on $(0,1)$, $\beta\in{(0,1/2]}$ and $f(1/2)=1/5$, we see that the minimum of $\frac{1}{5}$ and $\frac{\beta+\alpha_{n}-\beta\alpha_{n}}{2(1+\beta)}$ being equal to $\frac{1}{5}$ requires that $\alpha_{n}\geq{\frac{1}{5}}$. Thus, for $\alpha_{0}=\frac{\beta^{2}}{2(1+\beta)}<\frac{1}{5}$, our recurrence simplifies to: 
    \begin{equation*}
        \alpha_{n+1}=(1-\beta)\alpha_{n}+\beta\Big(\frac{\beta+\alpha_{n}-\beta\alpha_{n}}{2(1+\beta)}\Big)
    \end{equation*}
    Again, since $\alpha_{n+1}$ is a weighted average of $\alpha_{n}$ and $\frac{\beta+\alpha_{n}-\beta\alpha_{n}}{2(1+\beta)}$, 
    \begin{equation*}
        \alpha_{n+1}\geq{\alpha_{n}} \hspace{5pt}\Longleftrightarrow \hspace{5pt}\frac{\beta+\alpha_{n}-\beta\alpha_{n}}{2(1+\beta)}\geq{\alpha_{n}} \hspace{5pt}\Longleftrightarrow\hspace{5pt}{\alpha_{n}\leq{\frac{\beta}{1+3\beta}}}
    \end{equation*}
    On the other hand, notice that if $\alpha_{n}<{\frac{\beta}{1+3\beta}}$ then $\frac{\beta+\alpha_{n}-\beta\alpha_{n}}{2(1+\beta)}=\frac{\beta+(1-\beta)\alpha_{n}}{2(1+\beta)}<{\frac{\beta+(1-\beta)\frac{\beta}{1+3\beta}}{2(1+\beta)}}=\frac{\beta}{1+3\beta}$ which tells us that $\alpha_{n+1}<{\frac{\beta}{1+3\beta}}$, since $\alpha_{n+1}$ is the weighted average of this quantity and $\alpha_{n}$. Since $\beta\in{(0,1/2)}$, $\alpha_{0}=\frac{\beta^{2}}{2(1+\beta)}<{\frac{\beta}{1+3\beta}}$. Thus, in our case, the sequence $(\alpha_{n})_{n\geq{0}}$ is strictly increasing and bounded above by $\frac{\beta}{1+3\beta}$, and so converges to a limit $\lambda$ as $n$ tends to $\infty$. This limit satisfies: 
    \begin{equation*}
        \lambda=(1-\beta)\lambda+\beta\Big(\frac{\beta+\lambda-\beta\lambda}{2(1+\beta)}\Big) \hspace{5pt}\Longleftrightarrow \hspace{5pt}\lambda=\frac{\beta}{1+3\beta}
    \end{equation*}
    This completes our proof.
\end{proof}    

\begin{appendices}
    \section{Random Walks and A Priori Regularity Estimates for Harmonic Functions on Orthodiagonal Maps}
\label{sec: Random Walks and A Priori Regularity Estimates for Harmonic Functions on Orthodiagonal Maps}

As per our discussion in Section \ref{subsec: outline of proof near boundary}, in this section, we will give self-contained proofs of Lemmas \ref{lem: weak Beurling on OD maps} and \ref{lem: discrete harmonic functions are beta Holder in the bulk} by following the strategy used by Chelkak to prove analogous results in \cite{Ch16}. Suppose $G=(V,E,c)$ is a  weighted plane graph, not necessarily finite. Let $(S_{n})_{n\geq{0}}$ be a simple random walk on $G$. For any $x\in{\mathbb{C}}$ and $r>0$ such that $B(x,r)\subseteq{\widehat{G}}$, let $T_{\partial{B(x,r)}}$ be the first time at which our simple random walk exits the ball $B(x,r)$: 
\begin{equation*}
    T_{\partial{B(x,r)}}:=\inf\{n\in\mathbb{N}_{0}:S_{n}\notin{B(x,r)}\}
\end{equation*}
We say that $G$ satisfies \textbf{Property (S)} if there exist constants $\eta\in{(0,\pi)}$ and $c>0$ such that for any $v\in{V}$ and $r>0$ satisfying $B(v,r)\subseteq{\widehat{G}}$,
\begin{equation*}
    \mathbb{P}^{v}\big(\text{arg}(S_{T_{\partial{B(v,r)}}}-v)\in{I}\big)\geq{c}
\end{equation*}
for any interval $I\subseteq{S^{1}}$ with $\text{length}(I)\geq{\eta}$. Informally, Property (S) tells us that a simple random walk started at the center of the disk $B(v,r)$ has probability at least $c$ of exiting this disk through a discrete arc of length $\eta$. Proposition 6.7 of \cite{CLR23} says that Property (S) holds for T-graphs satisfying the condition ``$Lip(\kappa,\delta)$" for some $\kappa\in{(0,1)}$ and $\delta>0$. As per our discussion in Section \ref{subsec: outline of proof near boundary}, for any $\kappa\in{(0,1)}$ there exists $c=c(\kappa)>1$ so that an orthodiagonal map with edges of length at most $\varepsilon$ satisfies the condition $Lip(\kappa,c\varepsilon)$. Hence, Property (S) for orthodiagonal maps follows immediately from Proposition 6.7 of \cite{CLR23}. For the convenience of the reader, we give a self-contained proof of Property (S) for orthodiagonal maps without reference to the language of t-embeddings. Having proven this, Lemmas \ref{lem: weak Beurling on OD maps} and \ref{lem: discrete harmonic functions are beta Holder in the bulk} readily follow.   

\subsection{Microscale Property (S) on Orthodiagonal Maps}
\label{subsec: property (S)}

Theorem 1.1 of \cite{GJN20} says the following:

\begin{thm}
\label{thm: GJN20}
    (Theorem 1.1 of \cite{GJN20}) Suppose $\Omega$ is a bounded simply connected domain, $g:\mathbb{R}^{2}\rightarrow{\mathbb{R}}$ is a $C^{2}$ function. Given $\delta, \varepsilon\in{(0,\text{diam}(\Omega))}$, let $G=(V^{\bullet}\sqcup{V^{\circ}}, E)$ be a simply connected orthodiagonal map, with edges of length at most $\varepsilon$ such that the Hausdorff distance between $\partial{V^{\bullet}}$ and $\partial{\Omega}$  is at most $\delta$. Let $h_{c}$ be the solution to the continuous Dirichlet problem on $\Omega$ with boundary data $g$ and let $h_{d}:V^{\bullet}\rightarrow{\mathbb{R}}$ be the solution to the discrete Dirichlet problem on $G^{\bullet}$ with boundary data $g|_{\partial{V^{\bullet}}}$. Set:
    \begin{align*}
        C_{1}&:=\sup_{x\in{\widetilde{\Omega}}}|\nabla{g(x)}|, & C_{2}&:=\sup_{x\in{\widetilde{\Omega}}}\|D^{2}g(x)\|_{2}
    \end{align*}
    where $\widetilde{\Omega}=\text{conv}(\overline{\Omega}\cup{\widehat{G}})$. Then there exists an absolute constant $C>0$ such that for all $x\in{V^{\bullet}\cap{\Omega}}$,
    $$
    |h_{d}(x)-h_{c}(x)|\leq{\frac{C\text{diam}(\Omega)(C_{1}+C_{2}\varepsilon)}{\log^{1/2}(\text{diam}(\Omega)/(\delta\vee\varepsilon))}}
    $$
\end{thm}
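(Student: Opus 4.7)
The plan is to view $u := h_d - h_c|_{V^\bullet}$ as the solution to a discrete Poisson problem on $G^\bullet$, with boundary data equal to the mismatch between $\partial V^\bullet$ and $\partial\Omega$, and source $\Delta^\bullet u = -\Delta^\bullet h_c$. Letting $\tau$ be the exit time of $\text{Int}(V^\bullet)$ by a simple random walk $(S_n)$ on $G^\bullet$ and $\mathcal{G}(x,y)\geq 0$ the corresponding killed Green's function, the standard discrete representation gives
\begin{equation*}
|u(x)| \;\leq\; \mathbb{E}^x\bigl[|g - h_c|(S_\tau)\bigr] \;+\; \sum_{y\in\text{Int}(V^\bullet)} \mathcal{G}(x,y)\,|\Delta^\bullet h_c(y)|.
\end{equation*}
For the boundary term, each $w\in\partial V^\bullet$ lies within $\delta$ of a point $\tilde w\in\partial\Omega$ where $h_c(\tilde w)=g(\tilde w)$; combining the Lipschitz bound $|g(w)-g(\tilde w)|\leq C_1\delta$ with a Brownian harmonic-measure bound on $|h_c(w)-h_c(\tilde w)|$, one gets $\|g-h_c\|_{L^\infty(\partial V^\bullet)} = O(C_1\delta)$, contributing $O(C_1\delta)$ to $|u(x)|$.

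For the interior term, Taylor expansion of $h_c$ about $v\in\text{Int}(V^\bullet)$ together with the orthodiagonal identity $\sum_{u\sim v} c(u,v)(u-v)=0$ (a consequence of the discrete holomorphicity of $z\mapsto z$) yields
\begin{equation*}
\Delta^\bullet h_c(v) \;=\; \tfrac{1}{2}\sum_{u\sim v} c(u,v)(u-v)^T D^2 h_c(v)(u-v) \;+\; O\bigl(\|D^3 h_c\|_v\,\varepsilon\cdot\text{Area}(N_v)\bigr).
\end{equation*}
The quadratic form does not vanish pointwise in general (unlike on an isoradial graph, where $\sum c(u,v)(u-v)(u-v)^T$ is a scalar multiple of $I$, so the trace-free Hessian $D^2 h_c$ contracts to zero). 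However, as in the proof of Proposition \ref{prop: continuous Laplacian is average of discrete Laplacian}, the sum $\sum_y \mathcal{G}(x,y)\Delta^\bullet h_c(y)$ can be recast via discrete integration by parts into a contour integral along $\partial V^\bullet$ controlled by the discrete Dirichlet energy of $h_c$, which is comparable to its continuous counterpart on $\Omega$ and is of size $O(C_1^2\,\text{diam}(\Omega)^2)$.

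The crucial $\log^{1/2}$ improvement comes from 2D potential theory: the discrete capacity of a dyadic annulus $\{r \leq |z-x| \leq R\}$ in $G^\bullet$ is $\asymp 1/\log(R/r)$, with minimum admissible inner radius $\delta\vee\varepsilon$ (limited by the mesh or by the boundary distance). Distributing the Dirichlet energy of $h_c$ across such annuli and applying Cauchy-Schwarz converts one logarithm into a square root, producing the bound $O(C_1\,\text{diam}(\Omega)/\log^{1/2}(\text{diam}(\Omega)/(\delta\vee\varepsilon)))$ plus the $O(C_2\,\varepsilon\,\text{diam}(\Omega))$ correction from the $\varepsilon$-error in the discrete-Laplacian Taylor expansion. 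The main obstacle is establishing the capacity-of-annulus comparison on arbitrary orthodiagonal maps: absent any periodicity, this requires an effective-resistance argument exploiting only the duality $c(e^\bullet_Q)c(e^\circ_Q)=1$ and the orthogonality of primal and dual diagonals, which is the technical heart of the Gurel-Gurevich--Jerison--Nachmias proof.
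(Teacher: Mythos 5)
The paper does not prove this statement: it is imported verbatim as Theorem~1.1 of \cite{GJN20} and used as a black box (in the proof of Lemma~\ref{lem: property (S)}), so there is no in-paper proof to compare your sketch against.

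Taken on its own terms as a sketch of the GJN argument, your decomposition into a boundary term and an interior source term via the killed Green's function is the right starting point, and your observation that $\Delta^\bullet h_c$ does not vanish pointwise on a general orthodiagonal map (unlike the isoradial case) correctly identifies the central difficulty. However, two points are problematic. First, the claim $\|g - h_c\|_{L^\infty(\partial V^\bullet)} = O(C_1\delta)$ is false for a general simply connected $\Omega$: the harmonic extension $h_c$ of Lipschitz boundary data is not Lipschitz up to the boundary of an arbitrary simply connected domain, and the Brownian/Beurling argument you invoke only yields a bound of order $C_1\sqrt{\delta\,\mathrm{diam}(\Omega)}$ (which is still small enough to be absorbed into the final estimate, but your intermediate inequality as written does not hold). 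Second, and more substantially, the step that produces the $\log^{1/2}$ gain is not established. A direct Cauchy--Schwarz of $\sum_y \mathcal{G}(x,y)\Delta^\bullet h_c(y)$ against the Dirichlet energy fails: $\mathcal{E}[\mathcal{G}(x,\cdot)]=\mathcal{G}(x,x)$ grows like $\log(\mathrm{diam}(\Omega)/\varepsilon)$ in two dimensions, so the naive bound is $O(C_1\,\mathrm{diam}(\Omega)\,\log^{1/2}(\mathrm{diam}(\Omega)/\varepsilon))$, which \emph{grows} as $\varepsilon\to 0$ rather than decays. The actual $\log^{-1/2}$ decay in GJN comes from a genuine cancellation exploiting that $\Delta^\bullet h_c$ averages to zero locally (the content of Proposition~\ref{prop: continuous Laplacian is average of discrete Laplacian}) together with a careful resistance/extremal-length argument; you flag this as ``the technical heart of the Gurel-Gurevich--Jerison--Nachmias proof'' and leave it unproved, so the key step of the theorem is missing. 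In short: the outline is plausible, one intermediate estimate is wrong, and the step responsible for the entire content of the bound is deferred rather than supplied.
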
 
\noindent Property (S) for simple random walks on orthodiagonal maps follows from Theorem \ref{thm: GJN20} by comparing $h_{d}$ and $h_{c}$ for a suitable choice of boundary data $g$:  
\begin{lem}
\label{lem: property (S)}
    (Microscale Property (S)) Suppose $G=(V^{\bullet}\sqcup{V^{\circ}},E)$ is an orthodiagonal map with edges of length at most $\varepsilon$, $u\in{V^{\bullet}}$ is a primal vertex of $G$, and $R>0$ is a positive real number so that $B(u,R)\subseteq{\widehat{G}}$. Let $(S_{n})_{n=0}^{\infty}$ be a simple random walk on $G^{\bullet}$ and let $T_{\partial{B(u,R)}}$ be the first time at which our random walk exits the open ball $B(u,R)$: 
    \begin{equation*}
        T_{\partial{B(u,R)}}:=\inf\{n\in{\mathbb{N}_{0}}: S_{n}\notin{B(u,R)}\}
    \end{equation*}
    Then there exist absolute constants $c,C>0$, $\eta\in{(0,\pi)}$ such that:
    \begin{equation*}
         \mathbb{P}^{u}\big(\text{arg}(S_{T_{\partial{B(u,R)}}}-u)\in{I}\big)\geq{c}
    \end{equation*}
    for any interval $I\subset{S^{1}}$ with $\text{length}(I)\geq{\eta}$, provided that $R\geq{C\varepsilon}$.
\end{lem}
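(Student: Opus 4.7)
The plan is to deduce microscale Property (S) from Theorem \ref{thm: GJN20} by comparing the discrete exit distribution on $B(u,R)$ to continuous harmonic measure, which from the center $u$ is uniform Lebesgue measure on $\partial B(u,R)$ and thus assigns mass at least $|I|/(2\pi) \geq \eta/(2\pi)$ to every arc of length $\geq \eta$. Provided $R/\varepsilon$ is large enough, Theorem \ref{thm: GJN20} will transfer this continuous lower bound to the discrete exit distribution, which is exactly the content of Property (S).

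Fix $\eta \in (0, \pi)$ to be an absolute constant (for example $\eta := 1$). Given $I \subseteq S^1$ with $|I| \geq \eta$, let $I' \subseteq I$ be a central sub-arc of length $\eta/2$, and define a $C^2$ cutoff $g : \mathbb{R}^2 \rightarrow [0,1]$ in polar coordinates about $u$ by $g(u + re^{i\theta}) := \phi(r/R)\psi(\theta)$, where $\phi \in C^\infty_c((1/2, 3/2))$ is fixed with $\phi \equiv 1$ on $[3/4, 5/4]$, and $\psi \in C^\infty(S^1)$ is supported in $I$ with $\psi \equiv 1$ on $I'$. A direct computation gives $\|g\|_\infty \leq 1$, $C_1 := \sup\|\nabla g\| = O(1/R)$ and $C_2 := \sup \|D^2 g\|_2 = O(1/R^2)$, with implied constants depending only on $\eta$ and hence absolute. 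Using a variant of Lemma 4.3 of \cite{BP24}, extract a simply connected suborthodiagonal map $G' \subseteq G$ with $\widehat{G'} \subseteq B(u, R)$, $d_{\text{Haus}}(\partial \widehat{G'}, \partial B(u, R)) = O(\varepsilon)$, $\text{Int}(V_{G'}^\bullet) = V^\bullet \cap B(u, R)$, and $\partial V_{G'}^\bullet \subseteq \{v \in V^\bullet \setminus B(u,R) : v \sim w \text{ for some } w \in V^\bullet \cap B(u,R)\}$. For $R$ at least an absolute constant multiple of $\varepsilon$, one has $u \in \text{Int}(V_{G'}^\bullet)$, and the simple random walk on $G^\bullet$ started at $u$ satisfies $T_{\partial B(u, R)} = T_{\partial V_{G'}^\bullet}$ almost surely.

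Let $h_c$ and $h_d$ denote the solutions to the continuous Dirichlet problem on $B(u,R)$ and the discrete Dirichlet problem on $(G')^\bullet$, respectively, both with boundary data given by the restriction of $g$. By the mean value property,
$$h_c(u) = \frac{1}{2\pi}\int_0^{2\pi}\psi(\theta)\,d\theta \geq \frac{|I'|}{2\pi} = \frac{\eta}{4\pi}.$$
Applying Theorem \ref{thm: GJN20} to $\Omega = B(u,R)$, $G'$, and $g$, and using $\text{diam}(\Omega) = 2R$ together with $\delta = O(\varepsilon)$, yields
$$|h_d(u) - h_c(u)| \leq \frac{C \cdot 2R \cdot (C_1 + C_2 \varepsilon)}{\log^{1/2}(2R/O(\varepsilon))} \lesssim \frac{1}{\log^{1/2}(R/\varepsilon)},$$
which is at most $\eta/(8\pi)$ once $R \geq C\varepsilon$ for $C$ a sufficiently large absolute constant; thus $h_d(u) \geq \eta/(8\pi) =: c$. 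Since $h_d(u) = \mathbb{E}^u[g(S_{T_{\partial V_{G'}^\bullet}})] = \mathbb{E}^u[g(S_{T_{\partial B(u, R)}})]$ by the optional stopping theorem and the coincidence of stopping times, and since $\text{supp}(g) \subseteq \{x : \text{arg}(x - u) \in I\}$ together with $g \leq 1$ gives $g \leq \mathbf{1}_{\{\text{arg}(\cdot - u) \in I\}}$, we conclude
$$\mathbb{P}^u\big(\text{arg}(S_{T_{\partial B(u, R)}} - u) \in I\big) \geq \mathbb{E}^u[g(S_{T_{\partial B(u, R)}})] = h_d(u) \geq c.$$

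The main technical step is the construction of $G'$: one needs $G'$ to be simply connected (so that Theorem \ref{thm: GJN20} applies), to Hausdorff-approximate $B(u, R)$ at scale $\varepsilon$ (so the resulting error bound is of the correct order), and crucially to arrange that the random walk's exit time from $B(u, R)$ on $G^\bullet$ coincides almost surely with its hitting time of $\partial V_{G'}^\bullet$ (so that $h_d(u)$ genuinely computes the desired exit expectation). Granting such a $G'$ from \cite{BP24}, the remaining ingredients — the explicit bump construction, the mean value property, and the invocation of Theorem \ref{thm: GJN20} — are routine.
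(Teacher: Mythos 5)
Your proof is correct and follows essentially the same approach as the paper's: both arguments apply Theorem \ref{thm: GJN20} on a suborthodiagonal approximation of $B(u,R)$ with a smooth bump-function boundary datum, lower-bound $h_c(u)$ by the mean value property, and close by taking $R/\varepsilon$ large enough that the GJN error term drops below the continuous lower bound. The only cosmetic differences are that you build the bump in polar coordinates about $u$ and explicitly invoke a variant of Lemma 4.3 of \cite{BP24} for the suborthodiagonal map, whereas the paper centers a radial bump at a boundary point $v\in\partial B(u,R)$ and works directly with the ``discrete ball'' $B_G(u,R)$.
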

\noindent We call this a microscale estimate because it holds for all balls whose radius is at least a constant multiple of the mesh of our orthodiagonal map.
\begin{proof}
    Suppose $G=(V^{\bullet}\sqcup{V^{\circ}},E)$ is an orthodiagonal map with edges of length at most $\varepsilon$, $u\in{V^{\bullet}}$, and $R>0$ is a positive real number so that $B(u,R)\subseteq{\widehat{G}}$.  Let $B=B_{G}(u,R)$ be the discrete ball of radius $R$ centered at $u$ in $G$. That is, $B=(V_{B}^{\bullet}\sqcup{V_{B}^{\circ}},E_{B})$ is the suborthodiagonal map of $G$ consisting of all the quadrilaterals of $G$ with a primal vertex in the open ball $B(u,R)$. Then: 
    \begin{align*}
        \text{Int}(V_{B}^{\bullet})&=V^{\bullet}\cap{B(u,R)} \\
        \partial{V_{B}}^{\bullet}&=\{v\in{V^{\bullet}}:v\notin{B(u,R)},\hspace{2pt}u\sim{v} \hspace{3pt}\text{for some  $u\in{V^{\bullet}\cap{B(u,R)}}$}\}
    \end{align*}
    Since the edges of $G$ all have length at most $\varepsilon$, the edges of $G^{\bullet}$ have length at most $2\varepsilon$. Hence, the boundary vertices of $B^{\bullet}$ all lie within $2\varepsilon$ of $\partial{B(u,R)}$. Let $\varphi:\mathbb{R}^{2}\rightarrow{\mathbb{R}}$ be a smooth function with compact support such that: 
\begin{itemize}
    \item $\varphi(x)=1$ if $\|x\|\leq{\frac{1}{2}}$
    \item $\varphi(x)\in{(0,1)}$ if $\frac{1}{2}<\|x\|<1$. 
    \item $\varphi(x)=0$ if $\|x\|\geq{1}$
\end{itemize} 
Suppose $v\in{\partial{B(u,R)}}$. Consider the function $g:\mathbb{R}^{2}\rightarrow{\mathbb{R}}$ defined as follows: 
\begin{equation*}
    g(x)=\varphi\Big(\frac{x-v}{R}\Big)
\end{equation*}
Observe that: 
\begin{itemize}
    \item $g(x)=1$ for $x\in{\partial{B(u,R)}}$ such that $|\text{arg}(x-v)|\leq{\arccos{(\frac{7}{8})}}$.
    \item $g(x)\in{(0,1)}$ for $x\in{\partial{B(u,R)}}$ such that $\arccos{(\frac{7}{8})}<|\text{arg}(x-v)|<\arccos{(\frac{1}{2})}$.
    \item $g(x)=0$ for $x\in{\partial{B(u,R)}}$ if $|\text{arg}(x-v)|\geq{\arccos{(\frac{1}{2})}}$.
\end{itemize}
Here $\arccos(x)$ takes values in $[0,\pi]$ for $x\in{[-1,1]}$. Let $h_{d}:V_{B}^{\bullet}\rightarrow{\mathbb{R}}$ be the solution to the following boundary value problem on $B^{\bullet}$: 
\begin{align*}
    \Delta^{\bullet}h_{d}(x)&=0 \hspace{27pt}\text{for all $x\in{\text{Int}(V^{\bullet}_{B}})$}\\ 
    h_{d}(x)&=g(x) \hspace{14pt}\text{for all $x\in{\partial{V^{\bullet}_{B}}}$}
\end{align*}
If $(S_{n})_{n=1}^{\infty}$ is a simple random walk on $G^{\bullet}$ and $T_{\partial{B(u,R)}}=\inf\{n\in{\mathbb{N}_{0}}:S_{n}\notin{B(u,R)}\}$ is the time at which our random walk exits $B(u,R)$, by the maximum principle for discrete harmonic functions:
\begin{equation*}
    \mathbb{P}^{u}\big(|\text{arg}(S_{T_{\partial{B(u,R)}}}-u)|\leq{\arccos{(1/2)}}\big)\geq{h_{d}(u)}
\end{equation*}
provided that $R\geq{2\varepsilon}$. Notice that:
$$ \|\nabla{g}\|=R^{-1}\|\nabla\phi\|, \hspace{20pt} \|D^{2}g\|=R^{-2}\|{D^{2}}\phi\|
$$
Let $h_{c}:\overline{B(u,R)}\rightarrow{\mathbb{R}}$ be the solution to the corresponding continuous boundary value problem on $B(u,R)$: 
\begin{align*}
    \Delta{h_{c}(x)}&=0 \hspace{17pt}\text{for all $x\in{B(u,R)}$} \\ 
    h_{c}(x)&=g(x) \hspace{5pt}\text{for all $x\in{\partial{B(u,R)}}$}
\end{align*}
Since $g\equiv{1}$ on a boundary arc along $\partial{B(u,R)}$ of length $2R\cdot\arccos{(7/8)}$, 
\begin{equation*}
    h_{c}(u)\geq{\frac{\arccos(7/8)}{\pi}}
\end{equation*}
Applying Theorem \ref{thm: GJN20} with $\Omega=B(u,R)$, $G=B_{G}(u,R)$, and $g$ as above, we have that: 
\begin{equation*}
    |h_{d}(u)-h_{c}(u)|\leq{\frac{C(2R)(R^{-1}\|\nabla{\varphi}\|+R^{-2}\|D^{2}\varphi\|\varepsilon)}{\log^{1/2}(2R/\varepsilon)}}=\frac{2C(\|\nabla{\varphi}\|+\|D^{2}\varphi\|\varepsilon{R^{-1}})}{\log^{1/2}(2R/\varepsilon)}
\end{equation*}
Taking $R\geq{C'\varepsilon}$ for some absolute constant $C'>0$ sufficiently large, we can ensure that: 
\begin{equation*}
    \frac{2C(\|\nabla{\varphi}\|+\|D^{2}\varphi\|\varepsilon{R^{-1}})}{\log^{1/2}(2R/\varepsilon)}\leq{\frac{\arccos(7/8)}{2\pi}}
\end{equation*}
Putting all this together, we get: 
\begin{align*}
    \mathbb{P}^{u}\big(|\text{arg}(S_{T_{\partial{B(u,R)}}}-u)|\leq{\arccos{(1/2)}}\big)\geq{h_{d}(u)}\geq{h_{c}(u)-|h_{c}(u)-h_{d}(u)|}\geq{\frac{\arccos(7/8)}{2\pi}}
\end{align*}
provided that $R\geq{C'\varepsilon}$ for some absolute constant $C'>0$.
\end{proof}

\subsection{Proof of Lemmas \ref{lem: weak Beurling on OD maps} and \ref{lem: discrete harmonic functions are beta Holder in the bulk}}
\label{subsec: toolbox}

The following result is an analogue of Lemma 2.10 of \cite{Ch16}:
\begin{lem}
\label{lem: crossings of annuli}
(Crossings of annuli). If $\eta\in{(0,\pi)}$ is the absolute constant from Lemma \ref{lem: property (S)} and $\tau=e^{3\pi\tan(\eta/2)+4\pi\cot(\eta/2)}$, there exist absolute constants $\rho>0$, $K>0$, so that if $G=(V,E,c)$ is orthodiagonal map with edges of length at most $\varepsilon$, $v,w\in{V^{\bullet}}$ are vertices of $G^{\bullet}$, $\mathcal{A}_{\tau}=\{z:\tau^{-1}|v-w|\leq{|z-w|}\leq{\tau|v-w|}\}$, and $\gamma=(x_{0}, x_{1}, x_{2}, ..., x_{m})$ is a simple, nearest-neighbor path in $G^{\bullet}$ so that $x_{i}\in{\mathcal{A}_{\tau}}$ for $i=1,2,...,m-1$, $|x_{0}|\leq{\tau^{-1}|v-w|}$, and $|x_{m}|\geq{\tau|v-w|}$, then a simple random walk on $G^{\bullet}$ started at $v$ will intersect $\gamma$ before exiting the annulus $\mathcal{A}_{\tau}$ with probability at least $\rho$, provided that $|v-w|\geq{K\varepsilon}$ and $B(w,\tau|v-w|)\subseteq{\widehat{G}}$.
\end{lem}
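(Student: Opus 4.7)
The plan is to iterate Property~(S) from Lemma \ref{lem: property (S)} to bound from below the probability that a simple random walk started at $v$ hits $\gamma$ before exiting $\mathcal{A}_\tau$. After translating by $-w$ and rescaling by $|v-w|^{-1}$, I may assume $w = 0$ and $|v| = 1$, so $\mathcal{A}_\tau = \{z : \tau^{-1} \le |z| \le \tau\}$. The hypothesis $|v - w| \ge K\varepsilon$ is used so that all the disks appearing in the construction below are large enough (relative to the mesh $\varepsilon$) for Property~(S) to apply; this fixes the absolute constant $K$.

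Since $\gamma$ is a nearest-neighbor path with one endpoint strictly inside the circle $C_{\tau^{-1}} := \{|z|=\tau^{-1}\}$ and the other strictly outside the circle $C_\tau$, it must intersect every intermediate concentric circle $C_r = \{|z| = r\}$ for $r \in (\tau^{-1}, \tau)$. Fix some convenient intermediate radius $r_0$ and a vertex $p \in \gamma$ with $|p| \approx r_0$.

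The heart of the argument is the construction of a finite sequence of disks $B(z_0, \rho_0), \ldots, B(z_N, \rho_N) \subseteq \mathcal{A}_\tau$ with $z_0 = v$, $z_N$ a neighbor of $p$, and such that for each $k$ the intersection $B(z_k, \rho_k) \cap B(z_{k+1}, \rho_{k+1})$ contains an arc of $\partial B(z_k, \rho_k)$ of angular measure at least $\eta$ as seen from $z_k$. The sequence naturally decomposes into an angular traversal (moving around the origin on circles of roughly fixed radius) and a radial traversal (moving between such circles). The specific choice $\log\tau = 3\pi\tan(\eta/2) + 4\pi\cot(\eta/2)$ is engineered so that the angular and radial geometries of consecutive Property~(S)-admissible disks fit inside $\mathcal{A}_\tau$ and the total number $N$ of disks is bounded by a quantity depending only on $\eta$. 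Applying Property~(S) at each step together with the strong Markov property gives probability at least $c$ of transiting from $B(z_k, \rho_k)$ into $B(z_{k+1}, \rho_{k+1})$, and multiplying over the $N$ steps yields the claimed lower bound $\rho := c^N > 0$ on the hitting probability (since $p$ is a vertex of the nearest-neighbor path $\gamma$, the walk reaching a neighbor of $p$ is the same as the walk hitting $\gamma$).

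The main obstacle is this explicit geometric construction: one must choose the disks so that they all fit inside $\mathcal{A}_\tau$, overlap in arcs of macroscopic angular measure, and form a chain of absolute length. The formula for $\tau$ is tailored precisely to make this possible; once the chain of disks is in place, the conclusion reduces to a routine iteration of Property~(S) via the strong Markov property.
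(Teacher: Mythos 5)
Your strategy of building a chain of overlapping disks and iterating Property~(S) captures the angular--radial tradeoff behind the choice of $\tau$, but the terminus of your chain is where the argument breaks. You aim the chain at a vertex $p\in\gamma$, set $z_N$ to be a neighbor of $p$, and assert that the walk ``reaching a neighbor of $p$'' is the same as hitting $\gamma$. This is not correct, on two counts. First, iterating Property~(S) only ensures that with probability at least $c^N$ the walk enters the final disk $B(z_N,\rho_N)$; it does not place the walk at $z_N$, let alone at $p$. Second, even if the walk were sitting at a $G^{\bullet}$-neighbor of $p$, there is no uniform lower bound on the conductance of the edge joining that vertex to $p$, so no constant lower bound on the probability of stepping onto $\gamma$ in the next move. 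Property~(S) controls exit arcs of balls; it gives no control of hitting probabilities of a specific vertex or of a specific nearest-neighbor path, and this is exactly what you need at the end.

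The paper avoids the problem with a winding argument. The crosscut $\gamma$ makes $\mathcal{A}_\tau\setminus\gamma$ simply connected, so one may fix a branch of $\arg(\cdot - w)$ taking values in $[0,2\pi]$ on the outer circle. Property~(S) is applied along adaptive stopping times $\tau_k$ (the exit times of balls of radius $\alpha|S_{\tau_{k-1}}-w|$ about $S_{\tau_{k-1}}$) to show that, with probability at least $\rho$, the walk reaches the outer circle while its total argument change stays below $2\pi$. If $\arg(v-w)\ge 4\pi$ or $\le -2\pi$, this forces the walk to cross $\gamma$, because exiting through the outer circle without crossing would require an argument change exceeding $2\pi$. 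When $\arg(v-w)\in(-2\pi,4\pi)$, one first applies Property~(S) in the tangential direction to wind the walk by $3\pi$, pushing its argument outside the safe range, and then applies the radial case; this is where the two exponents $3\pi\tan(\eta/2)$ and $4\pi\cot(\eta/2)$ in $\tau$ come from. Crucially, crossing $\gamma$ as a plane curve \emph{is} hitting $\gamma$ as a vertex set, because the walk's trajectory and $\gamma$ both live in the plane graph $G^{\bullet}$, whose edges meet only at vertices. To salvage your chain-of-disks idea you would need to make the chain end provably on the opposite side of $\gamma$ from $v$ and then invoke this planarity; since which side $v$ is on is not known a priori, you are effectively pushed back to the winding argument the paper uses.
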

\begin{proof}
For any $\tau>1$, identifying the vertices and edges of $\gamma$ with the corresponding points and line segments in the complex plane, the domain $\mathcal{A}_{\tau}\setminus{\gamma}$ is simply connected. Let $arg(z-w)$ be the branch of the argument function on $\mathcal{A}_{\tau}\setminus{\gamma}$, centered at $w$, so that $arg(z-w)\in{[0,2\pi]}$ on $\{z:|z-w|=\tau|v-w|\}\subseteq{\partial{(\mathcal{A}_{\tau}\setminus{\gamma})}}$. Suppose $\tau_{0}=0$ and $(S_{n})_{n\geq{0}}$ is a simple random walk on $G^{\bullet}$ started at $v$. For $k\geq{1}$, we define the stopping times $(\tau_{k})_{k\geq{1}}$ inductively as follows:
\begin{equation*}
    \tau_{k}:=\inf\{n\geq{\tau_{k-1}}:|S_{n}-S_{\tau_{k-1}}|\geq{\alpha|S_{\tau_{k-1}}-w|}\}
\end{equation*}
Here, $\alpha\in{(0,1)}$ is some parameter whose value we will specify later. 
In other words, $\tau_{k}$ is the first time after $\tau_{k-1}$ at which our simple random walk exits the ball of radius $\alpha|S_{\tau_{k-1}}-w|$ about $S_{\tau_{k-1}}$. Define $v_{k}:=S_{\tau_{k}}$, $R_{k}:=|v_{k}-w|$, $\Delta\theta_{k}:=(arg(v_{k+1}-w)-arg(v_{k}-w))$. By Property (S), $arg(v_{k+1}-v_{k})\in{[arg(v_{k}-w)-\frac{\eta}{2}, arg(v_{k}-w)+\frac{\eta}{2}]}$ with probability at least $c$, where $c>0$ is the absolute constant from Lemma \ref{lem: property (S)}. In this case, we have that:
\begin{align*}
    \frac{R_{k+1}}{R_{k}}&\geq{\sqrt{1+2\alpha\cos\big(\eta/2\big)+\alpha^{2}}}, & \sin(|\Delta\theta_{k}|)&\leq{\frac{\big(\alpha+\frac{2\varepsilon}{|v_{k}-w|}\big)\sin(\eta/2)}{\sqrt{1+2\big(\alpha+\frac{2\varepsilon}{|v_{k}-w|}\big)\cos(\eta/2)+\big(\alpha+\frac{2\varepsilon}{|v_{k}-w|}\big)^{2}}}}
\end{align*}
If $|v_{k}-w|\geq{\alpha^{-2}\varepsilon}$, it follows that $\Delta\theta_{k}$ is at most some number that is asymptotic to $\alpha\sin(\eta/2)$ as $\alpha\rightarrow{0}$. Applying Property (S) in this way for $k=0,1,2,...,m-1$,\footnote{To apply Property (S) here, it is necessary that $\alpha|v_{k}-v|\geq{C\varepsilon}$ for $k=0,1,2,..., m-1$, where $C>0$ is the absolute constant in Lemma \ref{lem: property (S)}. Having fixed $\alpha\in{(0,1)}$, with each step we take, $|v_{k}-v|$ decreases by a factor of at most $1-\alpha$. Thus, taking $|v-w|\geq{\alpha^{-1}(1-\alpha)^{-m}C\varepsilon}\sim{Ce^{\frac{2\pi}{\sin(\eta/2)}}\varepsilon\alpha^{-1}}$ ensures that $\alpha|v_{k}-w|\geq{C\varepsilon}$ for $k=0,1,2,...,m-1$. We can similarly ensure that $|v_{k}-w|\geq{\alpha^{-2}\varepsilon}$ for $k=0,1,2,...,m-1$.} where $m\sim{\frac{2\pi}{\alpha\sin(\eta/2)}}$, we have that with probability at least $c^{m}$, $|arg(S_{n}-w)-arg(v-w)|\leq{2\pi}$ for $n=0,1,2,...,\tau_{m}$, and $|v_{m}-v|\geq{\big(1+2\alpha\cos\big(\eta/2\big)+\alpha^{2}\big)^{m}}\rightarrow{e^{\frac{4\pi\cos(\eta/2)}{\sin(\eta/2)}}}$ as $\alpha\rightarrow{0}$. In other words, for sufficiently large $\tau$ such that $1<\tau<e^{\frac{4\pi\cos(\eta/2)}{\sin(\eta/2)}}$, a random walk started at $v$ exits the annulus $\mathcal{A}_{\tau}$ through the outer boundary arc before making a full turn around $w$ (either clockwise or counterclockwise) with probability at least $\rho$, provided that $|v-w|\geq{K\varepsilon}$ and $B(w,\tau|v-w|)\subseteq{\widehat{G}}$.  Here, $\rho=\rho(\tau)>0$ and $K=K(\tau)>0$ are absolute constants, independent of the geometry of our orthodiagonal map. If $arg(v-w)\geq{4\pi}$ or $arg(v-w)\leq{-2\pi}$, this implies that with probability at least $\rho$, our random walk will intersect $\gamma$ before exiting $\mathcal{A}_{\tau}$.  \\ \\
If $-2\pi<arg(v-w)<4\pi$, we proceed as follows. Let $(v_{k})_{k\geq{0}}, (R_{k})_{k\geq{0}}, (\Delta\theta_{k})_{k\geq{1}}$ be as above. Without loss of generality, suppose $\pi\leq{arg(v-w)}<{4\pi}$. By Property (S), $arg(v_{k+1}-v_{k})\in{[arg(v_{k}-w)+\frac{\pi}{2}+\frac{\eta}{2}, arg(v_{k}-w)+\frac{\pi}{2}-\frac{\eta}{2}]}$ with probability at least $c>0$. In this case, we have that: 
\begin{equation*}
    \Delta\theta_{k}\geq{\sin(\Delta\theta_{k})}\geq{\frac{\alpha\cos(\eta/2)}{\sqrt{1+2\alpha\sin(\eta/2)+\alpha^{2}}}}
\end{equation*}
\begin{equation*}
     \sqrt{1+\Big(\alpha+\frac{2\varepsilon}{|v_{k}-w|}\Big)^{2}-2\Big(\alpha+\frac{2\varepsilon}{|v_{k}-w|}\Big)\sin(\eta/2)}\leq{\frac{R_{k+1}}{R_{k}}}\leq{\sqrt{1+\Big(\alpha+\frac{2\varepsilon}{|v_{k}-w|}\Big)^{2}+2\Big(\alpha+\frac{2\varepsilon}{|v_{k}-w|}\Big)\sin(\eta/2)}}
\end{equation*}
Taking $|v_{k}-w|\geq{\alpha^{-2}\varepsilon}$, $\Delta\theta_{k}$ is at least some number which is asymptotic to $\alpha\cos(\eta/2)$ as $\alpha\rightarrow{0}$, $R_{k+1}/R_{k}$ is at least some number which is asymptotic to $1-\alpha\sin(\eta/2)$, and at most some number which is asymptotic to $1+\alpha\sin(\eta/2)$, as $\alpha\rightarrow{0}$. Applying Property (S) in this way for $k=0,1,2,...,m_{1}-1$, where $m_{1}\sim{\frac{3\pi}{\alpha\cos(\eta/2)}}$, we have that with probability at least $c^{m_{1}}$, $arg(v_{m_{1}}-w)-arg(v-w)\geq{3\pi}$, and:
\begin{equation*}
    \underbrace{(1-\alpha\sin(\eta/2)+O(\alpha^{2}))^{m_{1}}}_{\rightarrow{e^{-\frac{3\pi\sin(\eta/2)}{cos(\eta/2)}}} \text{as }\alpha\rightarrow{0}}\leq{|S_{n}-w|}\leq{\underbrace{(1+\alpha\sin(\eta/2)+O(\alpha^{2}))^{m_{1}}}_{\rightarrow{e^{\frac{3\pi\sin(\eta/2)}{cos(\eta/2)}}} \text{as }\alpha\rightarrow{0}}} \hspace{5pt}\text{for $n=0,1,2,...,\tau_{m_{1}}$}
\end{equation*}
Thus, for any $\tau_{1}>e^{\frac{3\pi\sin(\eta/2)}{cos(\eta/2)}}$, a random walk on $G^{\bullet}$ started at $v$ makes a $3\pi$-turn counterclockwise around $w$ before exiting the annulus $\mathcal{A}_{\tau_{1}}$ with probability at least $\rho_{1}>0$, provided that $|v-w|\geq{K_{1}\varepsilon}$ and $B(w,\tau_{1}|v-w|)\subseteq{\widehat{G}}$. Here, $\rho_{1}=\rho_{1}(\tau)>0$ and $K_{1}=K_{1}(\tau)>0$ are absolute constants, independent of the geometry of our orthodiagonal map. Initially, $\pi\leq{arg(v-w)}<4\pi$. Having made a $3\pi$-turn counterclockwise around $w$, $arg(v_{m_{1}}-w)\geq{4\pi}$. Hence, applying our argument from the first case to a simple random walk on $G^{\bullet}$ started from $v_{m_{1}}$, we have that for any $\tau_{2}\geq{e^{3\pi\tan(\eta/2)+4\pi\cot(\eta/2)}}$, a random walk on $G^{\bullet}$ started at $v$ will intersect $\gamma$ before exiting the annulus $\mathcal{A}_{\tau_{2}}$ with probability at least $\rho_{2}$, provided that $|v-w|\geq{K_{2}\varepsilon}$ and $B(w,\tau_{2}|v-w|)\subseteq{\widehat{G}}$. Here, $\rho_{2}=\rho_{2}(\tau)>0$ and $K_{2}=K_{2}(\tau)>0$ are absolute constants, independent of the geometry of our orthodiagonal map. This concludes our proof.   
\end{proof}
\noindent Armed with this lemma, we are ready to prove the weak Beurling estimate, Lemma \ref{lem: weak Beurling on OD maps}:
\begin{proof}
(of Lemma \ref{lem: weak Beurling on OD maps}) Let $w$ be the point of $\partial{V^{\bullet}}$ closest to $u$ so that $d_{u}=|u-w|$. Suppose $\tau_{0}=0$, and $(S_{n})_{n\geq{0}}$ is a simple random walk on $G^{\bullet}$, started at $u$. For $k\geq{1}$, we define the stopping times $(\tau_{k})_{k\geq{1}}$ inductively as follows:
\begin{equation*}
    \tau_{k}:=\inf\{n\geq{\tau_{k-1}}: |S_{n}-w|\geq{\tau|S_{\tau_{k-1}}-w|}\}
\end{equation*}
In other words, $\tau_{k}$ is the first time that the distance between $S_{n}$ and $w$ is at least $\tau$ times the distance between $S_{\tau_{k-1}}$ and $w$. Observe that for each $k$:
\begin{equation*}
    \tau|S_{\tau_{k-1}}-w|\leq{|S_{\tau_{k}}-w|}\leq{\tau|S_{\tau_{k-1}}-w|+2\varepsilon}
\end{equation*}
This is because the edges of $G^{\bullet}$ have length at most $2\varepsilon$. Since $|S_{\tau_{0}}-w|=|u-w|$, it follows that for each $k$: 
\begin{equation}
\label{eqn: bounds on S-tau-k}
    \tau^{k}|u-w|\leq{|S_{\tau_{k}}-w|}\leq{\tau^{k}|u-w|+\frac{2\tau^{k}}{\tau-1}\varepsilon}
\end{equation}
By Lemma \ref{lem: crossings of annuli}, a simple random walk started at $S_{\tau_{k-1}}$ will exit the ball $B(w,\tau|S_{\tau_{k-1}}-w|)$ without hitting $\partial{V}^{\bullet}$ first, with probability at most $1-\rho$, provided that $|S_{\tau_{k-1}}-w|\geq{K\varepsilon}$, where $\rho,\tau,K$ are the absolute constants in the statement of Lemma \ref{lem: crossings of annuli}. By Equation \ref{eqn: bounds on S-tau-k}, $|u-w|\geq{K\varepsilon}$ implies that $|S_{\tau_{k-1}}-w|\geq{K\varepsilon}$ for all $k$. Hence, if $\tau_{\partial{V^{\bullet}}}$ is the hitting time of $\partial{V^{\bullet}}$ by our random walk, by the strong Markov property and Lemma \ref{lem: crossings of annuli}, we have that:
\begin{align*}
    \mathbb{P}^{u}(\tau_{m}<\tau_{\partial{V^{\bullet}}})&=\underbrace{\mathbb{P}^{u}(\tau_{1}<\tau_{\partial{V^{\bullet}}})}_{\leq{(1-\rho)}}\mathbb{E}^{u}\Big[\prod_{k=1}^{m-1}\underbrace{\mathbb{P}^{S_{\tau_{k}}}(\tau_{k+1}<\tau_{\partial{V}^{\bullet}}|\tau_{k}<\tau_{\partial{V}^{\bullet}})}_{\leq{(1-\rho)}}\Big] \\ 
    &\leq{(1-\rho)^{m}}
\end{align*}
provided that $|u-w|\geq{K\varepsilon}$. Let $m=\Big\lfloor\frac{\log(\frac{r}{|u-w|+\frac{2\varepsilon}{\tau-1}})}{\log(\tau)}\Big\rfloor$ so that $m$ is the largest integer such that $\tau^{m}+\frac{2\tau^{m}}{\tau-1}\varepsilon\leq{r}$. Then:
\begin{align*}
    \mathbb{P}^{u}(|S_{\tau_{\partial{V^{\bullet}}}}-w|\geq{r})&\leq{ \mathbb{P}^{u}(\tau_{m}<\tau_{\partial{V^{\bullet}}})}\leq{(1-\rho)^{m}}\leq{(1-\rho)^{-1}\left(\frac{|u-w|+\frac{2\varepsilon}{\tau-1}}{r}\right)^{\frac{\log(\frac{1}{1-\rho})}{\log(\tau)}}} \\
    &\leq{C\left(\frac{|u-w|}{r}\right)^{\frac{\log(\frac{1}{1-\rho})}{\log(\tau)}}}=C\Big(\frac{d_{u}}{r}\Big)^{\frac{\log(\frac{1}{1-\rho})}{\log(\tau)}}
\end{align*}
where $C>0$ is an absolute constant, assuming that $d_{u}=|u-w|\geq{K\varepsilon}$. If on the other hand $d_{u}<K\varepsilon$, by the maximum principle we have that: 
\begin{equation*}
    \mathbb{P}^{u}(|S_{\tau_{\partial{V^{\bullet}}}}-w|\geq{r})\leq{C\Big(\frac{K\varepsilon}{r}\Big)^{\frac{\log(\frac{1}{1-\rho})}{\log(\tau)}}}
\end{equation*}
This completes our proof. Furthermore, we see that we can take $\beta=\frac{\log(\frac{1}{1-\rho})}{log(\tau)}$, where $\tau$ and $\rho$ are the absolute constants in the statement of Lemma \ref{lem: crossings of annuli}.
\end{proof}
\noindent Having established Lemma \ref{lem: weak Beurling on OD maps}, Lemma \ref{lem: discrete harmonic functions are beta Holder in the bulk}, which tells us that discrete harmonic functions on orthodiagonal maps are $\beta$- H\"older in the bulk for some absolute constant $\beta\in{(0,1)}$, follows as an immediate corollary:
\begin{proof}
    (of Lemma \ref{lem: discrete harmonic functions are beta Holder in the bulk}) Let $K>0$ be the absolute constant in Lemma \ref{lem: weak Beurling on OD maps}. Observe that if $|x-y|\vee{K\varepsilon}>{\frac{1}{2}d}$, the desired result holds trivially, since $|h(y)-h(x)|\leq{2\|h\|_{\infty}}$. With this in mind, suppose that $|y-x|\vee{K\varepsilon}\leq{\frac{1}{2}d}$ and WLOG, $h(y)\geq{h(x)}$. Additionally, suppose that $|x-y|>K\varepsilon$. By the maximum principle, there exists a path $\gamma=(w_{0}, w_{1}, ..., w_{m})$ of vertices in $G^{\bullet}$ so that $w_{0}=y$, $w_{m}\in{\partial{V^{\bullet}}}$, and $h(w_{i})\geq{h(y)}$ for $i=0,1,2,...,m$. By Lemma \ref{lem: weak Beurling on OD maps}, with high probability, a simple random walk started at $x$ will hit $\gamma$ before exiting the ball of radius $d$ centred at $y$. Thus: 
    \begin{align*}
        h(x)&\geq{\left(1-C\Big(\frac{|y-x|}{d-|y-x|}\Big)^{\beta}\right)h(y)-C\Big(\frac{|y-x|}{d-|y-x|}\Big)^{\beta}\|h\|_{\infty}}  \\ 
        &\geq{\Big(1-C\Big(\frac{2|y-x|}{d}\Big)^{\beta}\Big)h(y)-C\Big(\frac{2|y-x|}{d}\Big)^{\beta}\|h\|_{\infty}}
    \end{align*}
    Since $h(y)\geq{h(x)}$, it follows that:
    \begin{equation*}
        |h(y)-h(x)|\leq{C'\|h\|_{\infty}\Big(\frac{|y-x|}{d}\Big)^{\beta}}    
    \end{equation*}
    where $C'>0$ is an absolute constant. If on the other hand $|x-y|\leq{K\varepsilon}$, where $K\varepsilon\leq{\frac{1}{2}d}$, the maximum principle tells us that:
    \begin{equation*}
        |h(y)-h(x)|\leq{C'\|h\|_{\infty}\Big(\frac{K\varepsilon}{d}\Big)^{\beta}} 
    \end{equation*}
    This completes our proof. 
\end{proof}
\end{appendices}


\begin{thebibliography}{}

\bibitem{BCL15}
I. Binder, L. Chayes and  H. K. Lei.
On the rate of convergence for critical crossing probabilities. \textit{Ann. Inst. H. Poincaré Probab. Statist.}, 51(2): 672-715, 2015. \url{https://doi.org/10.1214/13-AIHP589}

\bibitem{BP24}
I. Binder and D. Pechersky. 
Orthodiagonal maps, tilings of rectangles, and their convergence to conformal maps. 
\textit{ArXiv e-prints}, July 2024. 
\url{
https://doi.org/10.48550/arXiv.2407.20851} 

\bibitem{BR24}
I. Binder and L. Richards.
Power rate of convergence of discrete curves: framework and applications. 
\textit{ArXiv e-prints}, July 2024. 
\url{
https://doi.org/10.48550/arXiv.2407.10243v2}

\bibitem{BG24}
A. Bou-Rabee and E. Gwynne. 
Random walk on sphere packings and Delaunay triangulations in arbitrary dimension. 
\textit{ArXiv e-prints}, May 2024. 
\url{
https://doi.org/10.48550/arXiv.2405.11673}

 \bibitem{CN07}
 F. Camia and C.M. Newman. 
 Critical percolation exploration path and $SLE_{6}$: a proof of convergence.
 \textit{Probab. Theory Relat. Fields, Vol. 139, 473–519}, 2007. 
\url{https://doi.org/10.1007/s00440-006-0049-7}

\bibitem{Ch16}
D. Chelkak.
Robust discrete complex analysis: A toolbox.
\textit{Ann. Probab.} 44(1): 628-683, 2016.
\url{https://doi.org/10.1214/14-AOP985}

\bibitem{Ch19}
D. Chelkak. 
Planar Ising Model at Criticality: state of the art and perspectives. 
\textit{Proceedings of the International Congress of Mathematicians (ICM 2018)}, Rio de Janeiro, vol. IV, 2801-2828 (2019). 

\bibitem{Ch22}
D. Chelkak. 
Ising model and s-embeddings of planar graphs. 
\textit{ArXiv e-print}, Nov 2022. 
\url{
https://doi.org/10.48550/arXiv.2006.14559}
 
\bibitem{Ising}
 D. Chelkak, H. Duminil- Copin, C. Hongler, A. Kemppainen and S. Smirnov. 
 Convergence of Ising Interfaces to Schramm SLE curves.
 \textit{Comptes Rendus Mathematique, Vol. 352, No. 2, 157-161}, 2014. 
\url{https://doi.org/10.1016/j.crma.2013.12.002}

\bibitem{CIM23}
D. Chelkak, K. Izyurov, R. Mahfouf. 
Universality of spin correlations in the Ising model on isoradial graphs. 
\textit{Ann. Probab.} 51 (3) 840 - 898 (2023). 
\url{https://doi.org/10.1214/22-AOP1595}

\bibitem{CLR23}
D. Chelkak, B. Laslier and M. Russkikh.
Dimer model and holomorphic functions on t-embeddings of planar graphs.
\textit{Proc. Lond. Math. Soc.} 126(3): 1656–1739, 2023.
\url{https://doi.org/10.1112/plms.12516}

\bibitem{CS11}
D. Chelkak and S. Smirnov. 
Discrete complex analysis on isoradial graphs. 
\textit{Advances in Mathematics} 228(3): 1590- 1630, 2011.
\url{https://doi.org/10.1016/j.aim.2011.06.025}

\bibitem{CFL28}
R. Courant, K. Friedrichs, and H. Lewy. 
Uber die partiellen Differenzengleichungen der mathematischen Physik. 
\textit{Math. Ann.}, 100(1):32–74, 1928.
\url{https://doi.org/10.1007/BF01448839}

\bibitem{D99}
T. Dubejko.
Discrete solutions of Dirichlet problems, finite volumes, and circle packings. \textit{Discrete Comput. Geom.}, 22: 19–39, 1999. \url{https://doi.org/10.1007/PL00009447}

\bibitem{D68}
R. J. Duffin. 
Potential theory on a rhombic lattice.
\textit{J. Combinatorial Theory}, 5:258–272, 1968.
\url{https://doi.org/10.1016/S0021-9800(68)80072-9}

\bibitem{GM05}
J. Garnett and D. Marshall. 
Harmonic Measure. 
\textit{Cambridge University Press}, 2005.

\bibitem{GT01}
D. Gilbarg and N.S. Trudinger. 
Elliptic Partial Differential Equations of Second Order.
Classics in Mathematics, \textit{Springer-Verlag Berlin Heidelberg}, 2001. 
 
\bibitem{GJN20}
O. Gurel- Gurevich, D. Jerison and A. Nachmias. 
The Dirichlet problem for orthodiagonal maps. 
\textit{Adv. Math., Vol. 374}, 2020.
\url{https://doi.org/10.1016/j.aim.2020.107379}

\bibitem{KS07}
A. Kempainnen and S. Smirnov. 
Random curves, scaling limits and Loewner evolutions. 
\textit{Ann. Probab.}, 45(2): 698-779, 2017. \url{https://doi.org/10.1214/15-AOP1074}

\bibitem{Ke02}
R. Kenyon. 
The Laplacian and Dirac operators on critical planar graphs. 
\textit{Invent. Math.} 150, 409-439, 2002.
\url{https://doi.org/10.1007/s00222-002-0249-4}

\bibitem{K87}
H. Kesten.
Hitting probabilities of random walks on $\mathbb{Z}^{d}$. 
\textit{Stoc. Proc.
and Appl.}, 25: 165-184, 1987.
\url{https://doi.org/10.1016/0304-4149(87)90196-7}

\bibitem{IRWbook}
G. Lawler. 
Intersections of Random Walks.
Probability and its Applications, \textit{Birkh\"auser Boston Inc.}, 1991. 

\bibitem{CIPbook}
G. Lawler. 
Conformally Invariant Processes in the Plane.
Mathematical Surveys and Monographs, \textit{American Mathematical Society}, 2005. 

\bibitem{LL04}
G. Lawler and V. Limic.
The Beurling Estimate for a Class of Random Walks.
\textit{Electron. J. Probab.}, 9: 846 - 861, 2004.
\url{https://doi.org/10.1214/EJP.v9-228}

\bibitem{MNS14}
D. Mendelson, A. Nachmias and S.S. Watson. 
Rate of convergence for Cardy's formula. 
\textit{Comm. Math. Phys.}, 329, 29–56, 2014. \url{https://doi.org/10.1007/s00220-014-2043-8}
     
\bibitem{M01}
C. Mercat.
Discrete Riemann surfaces and the Ising model.
\textit{Comm. Math. Phys.}, 218(1):177–216, 2001.
\url{https://doi.org/10.1007/s002200000348}

\bibitem{PW23}
H.B. Phillips and N. Wiener. 
Nets and the Dirichlet Problem. 
\textit{Journal of Mathematics and Physics},  2(1-4): 105–124, 1923. 
\url{https://doi.org/10.1002/sapm192321105}

\bibitem{BR21}
L. Richards. 
Convergence Rates of Random Discrete Model Curves Approaching SLE Curves in the Scaling Limit. 
PhD Thesis, University of Toronto, 2021

\bibitem{theory of difference schemes}
A.A. Samarskii. 
The Theory of Difference Schemes. 
Monographs and Textbooks in Pure and Applied Mathematics, \textit{Marcel-Dekker Inc.}, 2001.

\bibitem{Sk13}
M. Skopenkov.
The boundary value problem for discrete analytic functions. 
\textit{Adv. Math.}, 240: 61–87, 2013.
\url{https://doi.org/10.1016/j.aim.2013.03.002}

\bibitem{Sm01}
 S. Smirnov.
 Critical percolation in the plane: conformal invariance, Cardy's formula, scaling limits.
 \textit{C. R. Acad. Sci. Paris, t. 333, Série I, 239–244}, 2001.
 \url{https://doi.org/10.1016/S0764-4442(01)01991-7}

\bibitem{V15}
F. Viklund. 
\textit{Ann. Probab.}, 43(1): 119-165, 2015.
\url{https://doi.org/10.1214/13-AOP872}

\bibitem{W15}
B. Werness. 
Discrete analytic functions on non-uniform lattices without global geometric control.
\textit{ArXiv e-print}, Nov 2015.
\url{
https://doi.org/10.48550/arXiv.1511.01209
}
 
\end{thebibliography}
\end{document}